\documentclass[12pt]{article}
\usepackage[margin=1in]{geometry}
\usepackage{setspace}
\usepackage{amssymb}
\usepackage{amsmath}
\usepackage{amsthm}
\usepackage{hyperref}

\onehalfspacing 
\usepackage[ruled,vlined,linesnumbered]{algorithm2e}
\SetKwInput{KwInput}{Input} 
\newcommand{\TV}{\scriptsize\mathrm{TV}}

\newcommand{\df}{\mathrm{d}}
\newcommand{\Lik}{f}
\newcommand{\Z}{\mathsf{Z}}
\newcommand{\ind}{\mathbf{1}}
\newcommand{\Mtd}{k_{\scriptsize\mathrm{DA}}}

\newtheorem{theorem}{Theorem}[section]
\newtheorem{lemma}[theorem]{Lemma}

\newtheorem{corollary}[theorem]{Corollary}
\newtheorem{proposition}[theorem]{Proposition}

\title{\bf Convergence analysis of data augmentation algorithms in Bayesian lasso models with log-concave likelihoods}

\date{}

\author{Jingkai Cui \and Qian Qin}

\date{
School of Statistics \\ University of Minnesota}

\begin{document}

\maketitle

\begin{abstract}
    We study the convergence properties of a class of data augmentation algorithms targeting posterior distributions of Bayesian lasso models with log-concave likelihoods. 
    Leveraging isoperimetric inequalities, we derive a generic convergence bound for this class of algorithms and apply it to Bayesian probit, logistic, and heteroskedastic Gaussian linear lasso models.
    Under feasible initializations, the mixing times for the probit and logistic models are of order $O[(p+n)^3 (pn^{1-c} + n)]$, up to logarithmic factors, where $n$ is the sample size, $p$ is the dimension of the regression coefficients, and $c \in [0,1]$ is determined by the lasso penalty parameter.
    The mixing time for the heteroskedastic Gaussian model is $O[n(n+p)^3 (p n^{1-c} + n)]$, up to logarithmic factors.
\end{abstract}

\section{Introduction}

Lasso is an important tool in regression analysis for obtaining shrinkage estimators of regression coefficients \cite{tibshirani1996regression}.
In a Bayesian setting, an analogue of lasso is implemented by imposing Laplace (double exponential) priors on the regression coefficients \cite{figueiredo2003adaptive,park2008bayesian}.
The posterior distribution associated with a Bayesian lasso model, which has density of the form
\[
\pi(\beta) \propto \exp\left[ -\ell(\beta) + \lambda\|\beta\|_1 \right],
\]
where $\ell(\beta)$ is the negative log-likelihood, $\lambda$ is the lasso penalty parameter, and $\|\cdot\|$ is the $L^1$ norm (sum of the absolute values of a vector's components),
is typically intractable.
In this paper, we study a data augmentation algorithm, which is a type of Gibbs-like Markov chain Monte Carlo (MCMC) sampler \cite{tanner1987calculation,van2001art}, for sampling from this posterior distribution.
The main assumption is that the likelihood function of the model is log-concave.
We derive a quantitative convergence bound for this algorithm, and apply the bound to three specific models: a Bayesian probit lasso model, a Bayesian logistic lasso model, and a Bayesian Gaussian linear lasso model with heteroskedasticity.

The mixing properties of Gibbs-like algorithms for Bayesian lasso models, originated from \cite{park2008bayesian}, have been analyzed by \cite{khare2013geometric,rajaratnam2015mcmc,rajaratnam2015scalable,lee2024fast} in a homoskedastic linear regression setting when the likelihood is Gaussian.
In particular, \cite{lee2024fast} obtained quantitative bounds on the mixing time that are of the order $p^2 (n+p)^3$ (up to logarithm factors) when $\lambda$ is fixed, where $n$ is the sample size of the underlying data set, and $p$ is the dimension of the regression coefficients vector.
Our work borrows some techniques from \cite{lee2024fast}.

The mixing properties of Gibbs-like algorithms for Bayesian probit and logistic models with normal or flat priors, invented by \cite{albert1993bayesian} and \cite{polson2013bayesian} respectively, have been studied extensively.
Geometric ergodicity for these samplers are established in \cite{roy:hobe:2007}, \cite{choi2013polya}, and \cite{chakraborty2016convergence}.
In the context of probit regression, quantitative convergence bounds that scale favorably with $n$ and $p$ are established in \cite{qin2019convergence} and \cite{qin2020wasserstein} via coupling techniques.
\cite{johndrow2016mcmc} showed data augmentation algorithms for probit and logistic regression models mix slowly when the data set is imbalanced.
More recently, sharper mixing time bounds were obtained by \cite{lee2024fast} and \cite{ascolani2025mixing}.
See also \cite{wadia2024mixing}, who studied Gibbs samplers targeting strongly log-concave and smooth target distributions.

Finally, the Gaussian linear model with heteroskedasticity studied herein turns out to be equivalent to a linear model with Laplace errors.
Outside the context of Bayesian lasso, Gibbs-like algortithms related to this model were studied, largely qualitatively, by \cite{choi2013analysis} and \cite{roy2010monte} etc.

To carry out convergence analysis for Bayesian lasso models with log-concave likelihood functions, we rely on a technique based on conductance and isoperimetric inequalities.
The technique dates back to pioneering works of \cite{lawler1988bounds,jerrum1988conductance,dyer1991random,lovasz1993random}.
See \cite{chewi2025log} for an in-depth tutorial.
In the aforementioned works regarding regression models, it is utilized by \cite{wadia2024mixing}, \cite{lee2024fast} and \cite{ascolani2025mixing}.
The technique enjoyed tremendous recent success in the analysis of various MCMC algorithms targeting log-concave posterior distributions.
See, e.g., \cite{dwivedi2019log,wu2022minimax,andrieu2024explicit}.
Such methods have also been extended to some non-concave settings; see, e.g., \cite{vempala2019rapid,goyal2025mixing,zhou2025polynomial,kwon2024phase}.

Compared to previous works, the Bayesian model we study is the product of a possibly non-Gaussian (but log-concave) likelihood and a non-smooth (but log-concave) prior distribution.
This brings some unique challenges to the application of the aforementioned technique.
Taking inspiration from \cite{lee2024fast}, we utilize perturbation bounds from \cite{milman2009role} to establish an isoperimetric inequality for the log-concave but non-smooth posterior distribution.
This is then combined with a carefully constructed close coupling condition to bound the spectral gap of the underlying Markov chain.
By choosing a feasible initial distribution, one can then establish a quantitative mixing time bound.

For the Bayesian lasso and logistic models, it is found that when  $\lambda = \Omega(n^c)$ for some $c \in [0,1]$, the algorithm takes $O[(p+n)^3 (pn^{1-c} + n)]$ steps to mix (which is a common recommendation \cite{zhao2006model,van2008high}).
For the heteroskedastic Gaussian model, the mixing time is of order $O[n (n+p)^3 (pn^{1-c} +n)]$.

The rest of this article is organized as follows.
In Section \ref{sec:da}, we define the Bayesian lasso model and the data augmentation algorithm in question.
In Section \ref{sec:convergence}, we state a convergence bound for the data augmentation algorithm in a generic setting, and apply the bound to the three specific models mentioned previously.
Section \ref{sec:proof} contains the derivation of the generic convergence bound.

\section{Data Augmentation Algorithm for Bayesian Lasso} \label{sec:da}

\subsection{Bayesian lasso regression}

Throughout, for a vector $a$ of length $k$, we use $a_i$, $1 \leq i \leq k$, to denote its $i$th component.

Assume that $Y = (Y_1, \dots, Y_n)^{\top}$ consists of observable response variables, where $n$ is a positive integer, and each $Y_i$ takes values in a common $\sigma$-finite measure space.
Let $A \in \mathbb{R}^d$ and $B \in \mathbb{R}^p$ be (column) vectors of unknown regression coefficients, where $d$ and $p$ are positive integers.
As will become clear below, the coefficients in $B$ are those for which sparsity is to be encouraged, whereas the coefficients in $A$ are treated as non-sparse.
We consider Bayesian lasso models of the form
\begin{equation} \label{eq:model}
\begin{aligned}
    Y \mid A, B &\sim \Lik(\cdot \mid A, B), \\
    B_j \mid A & \stackrel{\text{ind}}{\sim} \text{Laplace}(\lambda), \quad j = 1, \dots, p, \\
    A_j & \stackrel{\text{ind}}{\sim} \mathrm{N}(0, \theta^{-2}), \quad j = 1,\dots,d.
\end{aligned}
\end{equation}
Here, for $\alpha \in \mathbb{R}^d$ and $\beta \in \mathbb{R}^p$, $\Lik(\cdot \mid \alpha, \beta)$ is a probability density function designating the likelihood of $Y$; $\lambda$ and $\theta$ are known positive hyperparameters; $\text{Laplace}(\lambda)$ is the distribution with density function proportional to $u \mapsto e^{-\lambda|u|}$; finally, $\mbox{N}(0,\theta^{-2})$ is the normal distribution with mean 0 and variance $\theta^{-2}$.
Note that $A$ and $B$ are a priori independent.

The correspondence between this Bayesian model and the frequentist lasso model is well-known.
In particular, given an observed dataset $y$, the posterior mode of the Bayesian model is the minimizer of the lasso loss 
\[
-\log \Lik(y \mid \alpha, \beta) + \frac{\theta^2 \|\alpha\|_2^2}{2} + \lambda \|\beta\|_1,
\]
where $\|\cdot\|_2$ is the Euclidean norm.
That is, in the frequentist analogue, an $L^2$ (ridge) penalty is placed on $A$, while an $L^1$ (lasso) penalty is placed on $B$.

The posterior density function of $(A,B)$ given $Y = y$ is
\begin{equation} \label{eq:posterior}
\pi_{A, B \mid Y}(\alpha, \beta \mid y) \propto \Lik(y \mid \alpha, \beta) \, \exp \left( - \frac{\theta^2 \|\alpha\|_2^2}{2} -\lambda \|\beta\|_1  \right).
\end{equation}
Typically, this is an intractable probability distribution.
In the next subsection, we describe a data augmentation algorithm for sampling from $\pi_{A,B \mid Y}(\cdot \mid y)$.

\subsection{Data augmentation algorithm}

The data augmentation algorithm in consideration relies on an auxillary random element $Z$ taking values in some $\sigma$-finite measure space $\Z$.
Suppose that $g(\cdot \mid \alpha, \beta, y)$ is a probability density function on~$\Z$.
Combined with a well-known Gaussian mixture representation of Laplace distributions \cite{andrews1974scale}, one arrives at the following augmented model:
\begin{equation} \label{eq:model_1}
    \begin{aligned}
        Z \mid A, B, T, Y &\sim g(\cdot \mid A, B, Y),\\
        Y \mid A, B, T &\sim \Lik(\cdot \mid A, B), \\
        B_j \mid A, T & \stackrel{\text{ind}}{\sim} \mbox{N}(0, T_j), \quad j = 1, \dots, p, \\
        A_j \mid T &\stackrel{\text{ind}}{\sim} \mbox{N}(0, \theta^{-2}), \quad j = 1,\dots, d, \\
        T_j & \stackrel{\text{ind}}{\sim} \mbox{Exp}(\lambda^2/2), \quad j = 1, \dots, p,
    \end{aligned}
\end{equation}
where $\mbox{Exp}(\lambda^2/2)$ has density function proportional to $u \mapsto e^{-\lambda^2 u / 2} \, \ind(u > 0)$, with $\ind(\cdot)$ denoting indicator functions.
Here, $T \in \mathbb{R}^p$ is an auxiliary random vector ensuring that $B_j \sim \mbox{Laplace}(\lambda)$ independently a priori \cite{andrews1974scale}.
Then the conditional distribution of $(A,B)$ given $Y = y$ derived from \eqref{eq:model_1} is still of the form \eqref{eq:posterior}.

Based on \eqref{eq:model_1}, one can construct a data augmentation algorithm targeting the posterior density $\pi_{A, B \mid Y}(\cdot \mid y)$ by iteratively drawing from the following conditional densities:
\[
\begin{aligned}
    \pi_{A, B \mid Z, T, Y}(\alpha, \beta \mid z, \tau, y) &\propto g(z \mid \alpha, \beta, y) \, \Lik(y \mid \alpha, \beta) \left[ \prod_{j=1}^d \exp \left( - \frac{\theta^2 \alpha_j^2}{2} \right) \right] \left[ \prod_{j=1}^p \exp \left( - \frac{\beta_j^2}{2 \tau_j} \right) \right], \\
    \pi_{Z, T \mid A, B, Y}(z, \tau \mid \alpha, \beta, y) &\propto g(z \mid \alpha, \beta, y) \, \prod_{j=1}^p \tau_j^{-1/2} \exp \left( - \frac{\beta_j^2}{2 \tau_j} - \frac{\lambda^2 \tau_j}{2} \right) \ind(\tau_j > 0).
\end{aligned}
\]
For the data augmentation scheme to work, $g(\cdot \mid \alpha, \beta, y)$ must be designed in a manner such that one can efficiently sample from $\pi_{A, B \mid Z, T, Y}(\cdot \mid z, \tau, y)$ and $g(\cdot \mid \alpha, \beta, y)$.
Then one can implement Algorithm \ref{alg:da}, which simulates a Markov chain $(A(t), B(t))_{t=0}^{\infty}$ with transition density
\[
    \Mtd((\alpha, \beta), (\alpha', \beta')) = \int_{\Z \times (0,\infty)^p} \pi_{A, B \mid Z, T, Y}(\alpha', \beta' \mid z, \tau, y) \, \pi_{Z,T \mid A, B, Y}(z, \tau \mid \alpha, \beta, y) \, \df (z, \tau) .
\]
The underlying Markov chain is reversible with respect to $\pi_{A,B \mid Y}(\cdot \mid y)$.
Under mild regularity conditions (see, e.g., \cite{tierney1994markov}), the law of $(A(t), B(t))$ converges to the posterior distribution as $t \to \infty$.
The question is can we quantify the rate of convergence.

\bigskip

\begin{algorithm}[H]
\caption{Data augmentation algorithm, $t$'th iteration} \label{alg:da}
\KwInput{The current state $(A(t), B(t))$}

\For{$j = 1,\dots,p$}{
    Draw $1/\tau_j$ from $\text{InvGaussian}(\lambda/|B_j(t)|, \lambda^2)$, which has density function
    \[
    u \mapsto \frac{\lambda}{\sqrt{2 \pi}} \, \exp[\lambda |B_j(t)|] \, u^{-3/2} \exp \left[ - \frac{B_j(t)^2 u}{2} - \frac{\lambda^2}{2u} \right] \ind(u > 0) ;
    \] 
    (Note this distribution is well-defined even if $B_j(t) = 0$.)
}

Draw $z$ from $g(\cdot \mid A(t), B(t), y)$\;

Draw $(A(t+1), B(t+1))$ from $\pi_{A, B \mid Z, T, Y}(\cdot \mid z, \tau, y)$\;

\end{algorithm}

\bigskip

\subsection{Special cases} \label{ssec:special}

Algorithm \ref{alg:da} encompasses several important special cases, as we now describe.

Let $x_i$, $i = 1,\dots, n$, be a sequence of $\mathbb{R}^p$-valued covariates.
Let $\phi(\cdot)$ and $\Phi(\cdot)$ be the density and distribution functions of the standard normal distribution, respectively.
Then Algorithm~\ref{alg:da} has the following special forms when $d = 1$.

{\it Probit lasso:}
Let
\begin{equation} \label{eq:lik-probit}
\Lik(y \mid \alpha, \beta) = \prod_{i=1}^n \Phi(\alpha + x_i^{\top} \beta)^{y_i} [1 - \Phi(\alpha + x_i^{\top} \beta)]^{1-y_i}, \quad y \in \{0,1\}^n,
\end{equation}
so \eqref{eq:model} is a Bayesian probit model with lasso (and ridge) penalty.
In other words,
\[
Y_i \mid A, B \stackrel{\mathrm{ind}}{\sim} \mbox{Bernoulli}(\Phi(A + x_i^{\top} B)), \quad i = 1,\dots, n.
\]
Following \cite{albert1993bayesian}, for $z \in \Z = \mathbb{R}^n$, let
\begin{equation} \label{eq:g-probit}
\begin{aligned}
    & g(z \mid \alpha, \beta, y) \\
    =& \prod_{i=1}^n \left[  \frac{\phi(z_i - \alpha - x_i^{\top} \beta)}{\Phi(\alpha + x_i^{\top} \beta)} \, \ind(z_i \geq 0) \, \ind(y_i = 1) + \frac{\phi(z_i - \alpha - x_i^{\top} \beta)}{1-\Phi(\alpha + x_i^{\top} \beta)} \, \ind(z_i < 0) \, \ind(y_i = 0) \right].
\end{aligned}
\end{equation}
In other words, if an $n$-dimensional random vector $Z'$ is distributed as $g(\cdot \mid \alpha, \beta, y)$, then $Z'_1, \dots, Z'_n$ are independent;
moreover, $Z'_i$ follows a normal distribution with mean $\alpha + x_i^{\top} \beta$ and unit variance, truncated to $[0,\infty)$ if $y_i = 1$, and to $(-\infty,0)$ if $y_i = 0$. 
As shown in \cite{albert1993bayesian}, $\pi(\alpha, \beta \mid z, \tau, y)$ then corresponds to the $(1+p)$-dimensional normal distribution with mean $[X^{\top} X + D(\tau)]^{-1} X^{\top} z$ and covariance $[X^{\top} X + D(\tau)]^{-1}$, where $X$ is the $n \times (1+p)$ design matrix whose $i$th row is $(1, x_i^{\top})$, and $D(\tau) = \mbox{diag}(\theta^2, 1/\tau_1, \dots, 1/\tau_p)$.

{\it Logistic lasso:}
Let
\begin{equation} \label{eq:lik-logistic}
\Lik(y \mid \alpha, \beta) = \prod_{i=1}^n \frac{\exp[ y_i (\alpha + x_i^{\top} \beta)]}{1 + \exp(\alpha + x_i^{\top} \beta)}, \quad y \in \{0,1\}^n,
\end{equation}
so \eqref{eq:model} is a Bayesian logistic model with lasso penalty.
In other words,
\[
Y_i \mid A, B \stackrel{\mathrm{ind}}{\sim} \mbox{Bernoulli} \left( \frac{\exp (A + x_i^{\top} B)}{1 + \exp (A + x_i^{\top} B)} \right), \quad i = 1,\dots, n.
\]
Following \cite{polson2013bayesian}, let $g(\cdot \mid \alpha, \beta, y)$ correspond to independent P\'{o}lya-Gamma (PG) random variables: for $z \in \Z = (0,\infty)^n$,
\begin{equation} \label{eq:g-logistic}
g(z \mid \alpha, \beta, y) = \prod_{i=1}^n \cosh \left( \frac{\alpha + x_i^{\top} \beta}{2} \right) \exp\left[ -\frac{z_i \, (
\alpha + x_i^{\top} \beta)^2}{2} \right]  f_{\scriptsize\mbox{PG}}(z_i),
\end{equation}
where $f_{\scriptsize\mbox{PG}}(\cdot)$ is is the density of the so called ``$\mbox{PG}(1,0)$" distribution.
The $i$th factor in \eqref{eq:g-logistic} corresponds to what is called the $\mbox{PG}(1, |\alpha + x_i^{\top} \beta|)$ distribution.
See \cite{polson2013bayesian} for the exact definition of PG distributions and ways to sample from them.
As shown in \cite{polson2013bayesian}, with this choice of $g(\cdot \mid \alpha, \beta, y)$, the density $\pi(\alpha, \beta \mid z, \tau, y)$ corresponds to the $(1+p)$-dimensional normal distribution with mean $[X^{\top} \Lambda X + D(\tau)]^{-1} X^{\top} (y - \ind_n / 2)$ and covariance matrix $[X^{\top} \Lambda X + D(\tau)]^{-1}$, where $\Lambda = \mbox{diag}(z_1, \dots, z_n)$, and $\ind_n$ is the $n$-dimensional vector full of 1's.

{\it Gaussian linear lasso with heteroskedasticity:}
Suppose that, given $(A, B)$, the random vector $(Y,Z) \in \mathbb{R}^n \times \mathbb{R}^n$ is distributed as follows:
\[
\begin{aligned}
    Y_i \mid Z, A, B &\stackrel{\scriptsize\mathrm{ind}}{\sim} \mbox{N}(A + x_i^{\top} B,\, Z_i^{-1}), \quad i = 1, \dots, n, \\
    Z_i \mid A, B & \stackrel{\scriptsize \mathrm{ind}}{\sim} \mathrm{InvGamma}(1, \gamma^2/2), \quad i = 1, \dots, n,
\end{aligned}
\]
where $\gamma > 0$ is a hyperparameter, and $\mathrm{InvGamma}(c_1, c_2)$ with $c_1 > 0$ and $c_2 > 0$ has density function proportional to $u \mapsto u^{-c_1-1} e^{-c_2/u} \ind (u > 0)$.
In particular, given $(Z,A,B)$, the response variables $(Y_i)_{i=1}^n$ are normally distributed with different variances, so \eqref{eq:model} is a Bayesian Gaussian linear lasso model with heteroskedasticity.
In terms of \eqref{eq:model_1}, one can derive that
\begin{equation} \label{eq:g-gaussian}
\begin{aligned}
    \Lik(y \mid \alpha, \beta) =& \left(\frac{\gamma}{2}\right)^n \exp \left( - \gamma \sum_{i=1}^n |y_i - \alpha - x_i^{\top} \beta| \right), \\
    g(z \mid \alpha, \beta, y) =& \prod_{i=1}^n \frac{\gamma}{\sqrt{2 \pi}} \exp \left( \gamma \, |y_i - \alpha - x_i^{\top} \beta| \right) \times \\ &\quad \quad z_i^{-3/2} 
    \exp \left[ - \frac{z_i}{2} (y_i - \alpha - x_i^{\top} \beta)^2 - \frac{\gamma^2}{2z_i} \right] \, \ind(z_i > 0).
\end{aligned}
\end{equation}
In particular, $g(z \mid \alpha, \beta, y)$ corresponds to the product of $\mathrm{InvGaussian}(\gamma/|y_i - \alpha - x_i^{\top} \beta| ,\gamma^2)$ distributions.
Moreover, it is not difficult to see that $\pi_{A,B \mid Z, T, Y}(\cdot \mid z, \tau, y)$ corresponds to the $(1+p)$-dimensional normal distribution with mean $[X^{\top}\Lambda X + D(\tau)]^{-1} X^{\top} \Lambda y$ and covariance matrix $[X^{\top} \Lambda X + D(\tau)]^{-1}$.

\section{Convergence Bounds} \label{sec:convergence}

\subsection{Preliminaries}

In this subsection, we briefly recall some basic concepts regarding the convergence rates of Markov chains.

Abusing notations, we do not differentiate between probability distributions and their density functions.
For two probability density functions $g_1$ and $g_2$ defined on the same measure space $(\Omega, \mathcal{F}, \mu)$, their total variation distance is
\[
\|g_1(\cdot) - g_2(\cdot) \|_{\TV} = \frac{1}{2} \int_{\Omega} |g_1(x) - g_2(x)| \, \mu(\df x).
\]

Suppose that a Markov chain $(A(t), B(t))_{t=0}^{\infty}$ evolves according to the transition kernel $\Mtd$, with $(A(0),B(0))$ following an initial distribution with density function $(\alpha, \beta) \mapsto \omega(\alpha, \beta)$.
For a non-negative integer~$t$, denote by $\omega \, \Mtd^t (\cdot)$ the probability density function of $(A(t), B(t))$, i.e., $\omega \, \Mtd^0 (\cdot) = \omega (\cdot)$,
\[
\omega \, \Mtd^{t+1} (\cdot) = \int_{\mathbb{R}^{d+p}} \omega \, \Mtd^t(\alpha, \beta) \, \Mtd((\alpha, \beta), \cdot) \, \df (\alpha, \beta).
\]

We say $\omega \, \Mtd^t(\cdot)$ converges to $\pi_{A,B \mid Y} (\cdot \mid y)$ at a geometric rate if there exists some function of $\omega$, say $\tilde{C}(\omega)$, and a number $\rho \in [0,1)$, independent of $\omega$, such that, for $t \geq 1$,
\begin{equation} \label{ine:geo}
\|\omega \, \Mtd^t(\cdot) - \pi_{A,B \mid Y} (\cdot \mid y)\|_{\TV} \leq \tilde{C}(\omega) \, \rho^t,
\end{equation}

The smallest $t$ for which $\|\omega \, \Mtd^t(\cdot) - \pi_{A,B \mid Y} (\cdot \mid y)\|_{\TV}$ is no greater than some prescribed $\bar{\epsilon} > 0$ is called the $\bar{\epsilon}$-mixing time, and is denoted by $t(\omega, \bar{\epsilon})$.
When \eqref{ine:geo} holds,
\[
t(\omega, \bar{\epsilon}) \leq \frac{\log \tilde{C}(\omega) - \log \bar{\epsilon}}{- \log \rho}.
\]

\subsection{Generic convergence bound} \label{ssec:general}

We establish a convergence bound for Algorithm \ref{alg:da} under the following conditions:

\begin{itemize}
    \item [(A1)] \phantomsection\label{A1}
    There exist positive numbers $\delta$ and $\epsilon$ such that, for $\alpha^{(1)} \in \mathbb{R}^d$, $\alpha^{(2)} \in \mathbb{R}^d$, $\beta^{(1)} \in \mathbb{R}^p$, $\beta^{(2)} \in \mathbb{R}^p$ satisfying $\|\alpha^{(1)} - \alpha^{(2)} \|_2^2 + \lambda^2 \|\beta^{(1)} - \beta^{(2)}\|_2^2 < \delta^2$,
    \[
    \begin{aligned}
        \left\| g(\cdot \mid \alpha^{(1)}, \beta^{(1)}, y) - g(\cdot \mid \alpha^{(2)}, \beta^{(2)}, y) \right\|_{\TV}  + \sqrt{2} \, p^{1/4} \, \delta^{1/2}  \leq 1 - \epsilon.
    \end{aligned}
    \]

    \item [(A2)] \phantomsection\label{A2} The likelihood $\Lik(y \mid \alpha, \beta)$ and its negative logarithm, $\ell(\alpha, \beta) = - \log \Lik(y \mid \alpha, \beta)$, satisfy the following:
    \begin{enumerate}
        \item [(i)] The function $\ell(\alpha, \beta)$ is convex.
        \item [(ii)] The function $\ell(\alpha, \beta)$ can be approximated point-wisely from above by a sequence of convex functions that are twice differentiable.
        \item [(iii)] there exists a positive number $D$ (that may depend on $y$, $\lambda$, and $\theta$) such that
        \[
        \frac{\pi_{A, B\mid Y}(\alpha, \beta \mid y)}{\mu_{\theta, \lambda}(\alpha, \beta)} \leq e^D,
        \]
        where
        \[
            \mu_{\theta, \lambda}(\alpha, \beta) = \frac{\theta^d}{(2\pi)^{d/2}} \exp \left( -\frac{\theta^2 \|\alpha\|_2^2}{2} \right) \, \frac{\lambda^p}{2^p} \, \exp \left( - \lambda \|\beta\|_1 \right), \quad \alpha \in \mathbb{R}^d, \; \beta \in \mathbb{R}^p.
        \]
        
    \end{enumerate}
    
\end{itemize}

Condition \hyperref[A1]{(A1)} is called a ``close coupling," and is used to argue $k((\alpha^{(1)}, \beta^{(1)}), \cdot)$ and $k((\alpha^{(2)}, \beta^{(2)}), \cdot)$ have a significant overlap if $\alpha^{(1)}$ is close to $\alpha^{(2)}$ and $\beta^{(1)}$ is close to $\beta^{(2)}$.
To establish this condition, one would need to upper bound $\left\| g(\cdot \mid \alpha^{(1)}, \beta^{(1)}, y) - g(\cdot \mid \alpha^{(2)}, \beta^{(2)}, y) \right\|_{\TV}$.
This type of calculation can naturally arise in a study of the simpler data augmentation algorithm with transition density
\begin{equation} \label{eq:k}
k((\alpha, \beta), (\alpha', \beta')) = \int_{\Z} \pi_{A,B \mid Z, T, Y}(\alpha', \beta' \mid z, \tau, y) \, g(z \mid \alpha, \beta, y) \, \df z,
\end{equation}
where $\tau \in (0,\infty)^p$ is a constant.
The transition law \eqref{eq:k} is associated with a non-lasso version of the model \eqref{eq:model}, where $B_j$ is apriori distributed as $\mbox{N}(0, \tau_j)$ for a fixed and known $\tau_j$, as opposed to a Laplace distribution.
When $\Lik(y \mid \alpha, \beta)$ corresponds to a probit or logistic model, \cite{lee2024fast} studied the transition law \eqref{eq:k}.
We are able to recycle their bounds on $\left\| g(\cdot \mid \alpha^{(1)}, \beta^{(1)}, y) - g(\cdot \mid \alpha^{(2)}, \beta^{(2)}, y) \right\|_{\TV}$ in these settings.

Condition \hyperref[A2]{(A2)} indicates that $\pi_{A,B \mid Y}(\cdot \mid y)$ is a perturbed version of $\mu_{\theta, \lambda}(\cdot)$.
When $\pi_{A,B \mid Y}(\cdot \mid y)$ is log-concave, this implies the two distributions are similar in terms of isoperimetry \cite{milman2009role,milman2012properties}.
Isoperimetry concerns how the probability mass of a measurable set controls the amount of additional mass gained under small enlargements of the set, via the measure of its boundary.
One may then obtain an isoperimetric inequality for $\pi_{A,B \mid Y}(\cdot \mid y)$ based on a known isoperimetric ineqaulity for $\mu_{\theta, \lambda}(\cdot)$.
This approach was taken by \cite{lee2024fast} to study a data augmentation algorithm associated with a homoskedastic Gaussian lasso model.

Combining the close coupling and the isoperimetric inequality (see \cite{chewi2025log} or \cite{andrieu2024explicit} for an introduction to the general technique), we establish the following theorem.
The proof will be presented in Section \ref{sec:proof}.

\begin{theorem} \label{thm:main}
    Suppose that \hyperref[A1]{(A1)} and \hyperref[A2]{(A2)} hold.
    Then \eqref{ine:geo} holds with
    \[
    \rho = 1 - \frac{1}{32} \epsilon^2 \min \left\{ 1, \frac{1}{4} \, \delta^2 \, \frac{C_1^2 \min \{ 1, \theta^2 \}}{(D+1)^2} \right\},
    \]
    where $C_1$ is a positive universal constant, and
    \begin{equation} \label{eq:C-omega}
    \tilde{C}(\omega) = \frac{1}{2} \sqrt{\int_{\mathbb{R}^{d+p}} \left[ \frac{\omega(\alpha, \beta)}{\pi_{A,B \mid Y}(\alpha, \beta \mid y)} - 1 \right]^2 \pi_{A, B \mid Y}(\alpha, \beta \mid y) \, \df (\alpha, \beta)} \, .
    \end{equation}
\end{theorem}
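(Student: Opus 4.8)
The plan is to lower bound the spectral gap of the reversible operator $\Mtd$ on $L^2(\pi_{A,B\mid Y}(\cdot\mid y))$ and then convert it into the bound \eqref{ine:geo}. It is classical that data augmentation operators are positive and self-adjoint, so if $\gamma$ denotes the spectral gap, the norm of $\Mtd$ restricted to mean-zero functions is $1-\gamma$, and reversibility gives the $L^2$ contraction $\|\omega\,\Mtd^t/\pi_{A,B\mid Y}-1\|_{L^2(\pi_{A,B\mid Y})}\le(1-\gamma)^t\|\omega/\pi_{A,B\mid Y}-1\|_{L^2(\pi_{A,B\mid Y})}$. Coupling this with $\|\cdot\|_{\TV}\le\tfrac12\|\cdot\|_{L^2(\pi_{A,B\mid Y})}$ yields \eqref{ine:geo} with $\rho=1-\gamma$ and $\tilde C(\omega)$ exactly the expression in \eqref{eq:C-omega}. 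It therefore suffices to prove $\gamma\ge\tfrac{1}{32}\epsilon^2\min\{1,\tfrac14\delta^2C_1^2\min\{1,\theta^2\}/(D+1)^2\}$, which I will obtain via conductance. Throughout I work in the weighted metric $\mathrm{d}((\alpha,\beta),(\alpha',\beta'))^2=\|\alpha-\alpha'\|_2^2+\lambda^2\|\beta-\beta'\|_2^2$, which is the one appearing in \hyperref[A1]{(A1)}.

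First I would turn \hyperref[A1]{(A1)} into a one-step overlap statement for $\Mtd$ itself. Writing $\Mtd((\alpha,\beta),\cdot)=\int\pi_{A,B\mid Z,T,Y}(\cdot\mid z,\tau,y)\,\pi_{Z,T\mid A,B,Y}(z,\tau\mid\alpha,\beta,y)\,\df(z,\tau)$ as a mixture of the $x$-independent kernel $(z,\tau)\mapsto\pi_{A,B\mid Z,T,Y}(\cdot\mid z,\tau,y)$, the data-processing inequality gives $\|\Mtd(x,\cdot)-\Mtd(x',\cdot)\|_{\TV}\le\|\pi_{Z,T\mid A,B,Y}(\cdot\mid x,y)-\pi_{Z,T\mid A,B,Y}(\cdot\mid x',y)\|_{\TV}$. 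Since $Z\indep T$ given $(A,B,Y)$, with the $T$-law $h_\beta$ (the product inverse-Gaussian law of Algorithm \ref{alg:da}) depending only on $\beta$, the product structure gives $\|\pi_{Z,T\mid A,B,Y}(\cdot\mid x,y)-\pi_{Z,T\mid A,B,Y}(\cdot\mid x',y)\|_{\TV}\le\|g(\cdot\mid\alpha,\beta,y)-g(\cdot\mid\alpha',\beta',y)\|_{\TV}+\|h_\beta-h_{\beta'}\|_{\TV}$. A short lemma, via Hellinger tensorization and a per-coordinate bound $H^2(h_{\beta_j},h_{\beta_j'})\lesssim\lambda|\beta_j-\beta_j'|$ followed by Cauchy–Schwarz, shows $\|h_\beta-h_{\beta'}\|_{\TV}\le\sqrt2\,p^{1/4}\delta^{1/2}$ whenever $\lambda\|\beta-\beta'\|_2<\delta$; this is precisely the extra additive term in \hyperref[A1]{(A1)}. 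Hence $\mathrm{d}(x,x')<\delta$ implies $\|\Mtd(x,\cdot)-\Mtd(x',\cdot)\|_{\TV}\le1-\epsilon$.

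Next I would establish an isoperimetric (Cheeger) inequality for $\pi_{A,B\mid Y}(\cdot\mid y)$ in the same weighted metric. By \hyperref[A2]{(A2)}(i)–(ii) the posterior is log-concave (the prior contributes the concave $-\theta^2\|\alpha\|_2^2/2-\lambda\|\beta\|_1$, the likelihood the convex $-\ell$), and by \hyperref[A2]{(A2)}(iii) it is dominated by $e^D\mu_{\theta,\lambda}$. After rescaling $\beta\mapsto\lambda\beta$, which turns the weighted metric into the Euclidean one and $\mu_{\theta,\lambda}$ into a product of $\Normal(0,\theta^{-2})$ and standard Laplaces, the reference measure has Cheeger constant of order $\min\{1,\theta\}$ (the Laplace factors contribute a universal constant, the Gaussian factor contributes $\theta$, and tensorization of the Poincaré/Cheeger constants for log-concave products takes the minimum). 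The perturbation bounds of \cite{milman2009role} for log-concave measures then transfer this to $\pi_{A,B\mid Y}$ with only a polynomial loss, giving a Cheeger constant $C_{\Ch}\gtrsim C_1\min\{1,\theta\}/(D+1)$; the twice-differentiable approximation from \hyperref[A2]{(A2)}(ii) is the device that makes this transfer rigorous despite the non-smoothness of the Laplace prior. This step is the main obstacle: obtaining the correct $\min\{1,\theta\}$ scaling through tensorization and, especially, the $1/(D+1)$ (rather than $e^{-D}$) dependence in the perturbation, is the delicate quantitative point.

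Finally I would combine the overlap property and the isoperimetric inequality into a conductance bound by the standard separated-sets argument. For a measurable $S$ with $\pi_{A,B\mid Y}(S)\le1/2$, split $S$ and $S^c$ into their non-leaky parts (where the chain escapes with probability below $\epsilon/2$) and the complementary leaky parts. The overlap property forces the two non-leaky parts to be $\delta$-separated, so the isoperimetric inequality bounds the mass of the intermediate region from below by $C_{\Ch}\,\delta\,\min\{\pi(\cdot),\pi(\cdot)\}$, while the leaky parts contribute escape flow directly; tracking constants gives conductance $\Phi\ge\tfrac{\epsilon}{4}\min\{1,\tfrac12\delta C_{\Ch}\}$. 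Cheeger's inequality for reversible chains, $\gamma\ge\Phi^2/2$, then yields $\gamma\ge\tfrac{1}{32}\epsilon^2\min\{1,\tfrac14\delta^2C_{\Ch}^2\}$, and substituting the bound on $C_{\Ch}$ reproduces the stated $\rho$.
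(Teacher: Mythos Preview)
Your proposal is correct and follows essentially the same route as the paper: establish a close-coupling bound for $\Mtd$ from \hyperref[A1]{(A1)} via the product structure of $\pi_{Z,T\mid A,B,Y}$ and an inverse-Gaussian total-variation estimate, obtain a Cheeger constant for $\pi_{A,B\mid Y}$ from \hyperref[A2]{(A2)} by tensorizing the Gaussian/Laplace factors (Bobkov) and applying Milman's log-concave perturbation bound, combine these into a spectral-gap lower bound via the standard conductance argument (the paper invokes \cite{andrieu2024explicit} directly), and then convert to \eqref{ine:geo} using positive semi-definiteness of the DA kernel. The only cosmetic difference is that you sketch the inverse-Gaussian bound via Hellinger tensorization, whereas the paper uses a one-sided KL bound plus Pinsker with a split according to which of $|\beta_j|,|\beta_j'|$ is larger; both give the same $\sqrt{2}\,p^{1/4}\delta^{1/2}$ term.
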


\subsection{Bounds for probit, logistic, and heteroskedastic Gaussian lasso}

We now apply Theorem \ref{thm:main} to the three regression models in Section \ref{ssec:special}.

We will study how $\rho$ in \eqref{ine:geo} and $t(\omega, \bar{\epsilon})$ scale with $n$, $d$, and $p$.
To this end, imagine that there is a sequence of data sets $(X_{(N)},y_{(N)})_{N=1}^{\infty}$, where $X_{(N)}$ is a design matrix and $y_{(N)}$ is a response vector.
For a given $N$, we suppress the subscript, so $X_{(N)}$ and $y_{(N)}$ are just $X$ and $y$ as defined in Section \ref{ssec:special}.
Each data set is associated with some value of $(n, p, \theta, \lambda)$, where $n$ and/or $p$ grow with~$N$.
For two sequences of positive numbers indexed by $N$, say, $(a_{(N)})_{N=1}^{\infty}$ and $(b_{(N)})_{N=1}^{\infty}$, write $b = O(a)$ if $b_{(N)}/a_{(N)}$ is bounded, and $b = \Omega(a)$ if $b_{(N)}/a_{(N)}$ is bounded away from zero as $N \to \infty$.

Let $X_{\lambda} \in \mathbb{R}^{n \times (1+p)}$ be the matrix whose $i$th row is $(1, x_i^{\top} / \lambda)$.
One has the decomposition
\[
     X_{\lambda}^{\top} X_{\lambda} = \frac{X^{\top}X}{\lambda} + \left( \begin{array}{cc} (1 - \lambda^{-1}) n & 0 \\ 0 & (\lambda^{-2} - \lambda^{-1}) \sum_{i=1}^n x_i x_i^{\top} \end{array}\right).
\]
Denote by $\sigma_{\max}(\cdot)$ the largest eigenvalue of a matrix.
Then, when $\lambda \geq 1$, by Weyl's inequality (see, e.g., Theorem 4.3.1 of \cite{horn2012matrix}),
\begin{equation} \label{ine:X-lambda}
     \sigma_{\max}(X_{\lambda}^{\top} X_{\lambda}) \leq \frac{\sigma_{\max}(X^{\top}X)}{\lambda} + \frac{(\lambda - 1) n}{\lambda} \leq  \sigma_{\max}(X^{\top}X).
\end{equation}

The next proposition gives asymptotic bounds on~$\rho$ for each model.

\begin{proposition} \label{pro:rho-examples}
    Each of the following statements holds.

    \begin{enumerate}
        \item Consider the data augmentation algorithm for Bayesian probit lasso model.
        Then \eqref{ine:geo} holds with $\rho$ satisfying
        \begin{equation} \label{eq:rho-scale-probit}
        1 - \rho = \Omega\left[ \frac{\min \{1, \theta^2\}}{\max\{ \sigma_{\max}(X_{\lambda}^{\top}X_{\lambda}), p \} \, (n + p)^2 \, (\log M)^2 } \right],
        \end{equation}
        where $M = (\theta^{-2} + 1) \, \sigma_{\max}(X_{\lambda}^{\top}X_{\lambda})$.

        \item Consider the data augmentation algorithm for Bayesian logistic lasso model.
        Then \eqref{ine:geo} holds with $\rho$ also satisfying \eqref{eq:rho-scale-probit}.

        \item Consider the data augmentation algorithm for Bayesian Gaussian lasso model with heteroskedasticity.
        Assume that $\gamma = O(1)$, and that $\sum_{i=1}^n |y_i| = O(n)$.
        Then \eqref{ine:geo} holds with $\rho$ satisfying
        \[
            1 - \rho = \Omega \left[ \frac{\min\{1, \theta^2\}}{\max\{ n \sigma_{\max}(X_{\lambda}^{\top} X_{\lambda}), p \} \, (n + p)^2 \, (\log M)^2 }  \right].
        \]
    \end{enumerate}
    In all three settings, one can take $\tilde{C}(\omega)$ as in \eqref{eq:C-omega}.
\end{proposition}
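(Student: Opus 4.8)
The plan is to verify \hyperref[A1]{(A1)} and \hyperref[A2]{(A2)} for each of the three models, read off the orders of $\delta$, $\epsilon$, and $D$, and substitute them into the expression for $\rho$ in Theorem \ref{thm:main}. Since $\epsilon$ can be taken to be a fixed constant and $\min\{1,\theta^2\}$ is already isolated in that expression, everything reduces to the scaling of $\delta^2$ (from \hyperref[A1]{(A1)}) and of $(D+1)^2$ (from \hyperref[A2]{(A2)(iii)}).

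Conditions \hyperref[A2]{(A2)(i)} and \hyperref[A2]{(A2)(ii)} are routine. Convexity of $\ell$ holds because $-\log\Phi$ and $-\log(1-\Phi)$ are convex (log-concavity of the Gaussian CDF) for the probit model, because $\ell(\alpha,\beta)=\sum_{i=1}^n[\log(1+e^{\alpha+x_i^\top\beta})-y_i(\alpha+x_i^\top\beta)]$ is convex and smooth for the logistic model, and because $\ell$ is a nonnegative combination of absolute values of affine functions for the heteroskedastic Gaussian model. The smoothness required in (A2)(ii) is automatic for probit and logistic, while for the Gaussian model $u\mapsto|u|$ is approximated from above by the smooth convex functions $u\mapsto\sqrt{u^2+m^{-1}}$ as $m\to\infty$.

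The heart of the argument is \hyperref[A2]{(A2)(iii)}: since the ratio there equals $\Lik(y\mid\alpha,\beta)/m(y)$ with marginal likelihood $m(y)=\int\Lik(y\mid\alpha,\beta)\,\mu_{\theta,\lambda}(\alpha,\beta)\,\df(\alpha,\beta)$, and since $\sup\Lik\le1$ for probit and logistic and $\sup\Lik\le(\gamma/2)^n$ for the Gaussian model, it suffices to lower-bound $m(y)$. I would restrict the integral to a small region around the origin. Passing to the coordinates $w=(\alpha,\lambda\beta)$, in which the linear predictors are the entries of $X_\lambda w$ and the prior becomes $\mathrm N(0,\theta^{-2})$ on $w_1$ and independent standard Laplace on $w_2,\dots,w_{1+p}$, a Euclidean ball $\{\|w\|_2\le c/\sqrt{\sigma_{\max}(X_\lambda^\top X_\lambda)}\}$ keeps every predictor within a constant of its value at the origin. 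On this ball the log-likelihood differs from its value at the origin by $O(n)$, and the assumptions $\gamma=O(1)$ and $\sum_i|y_i|=O(n)$ (for the Gaussian model), or the bound $\Lik\le1$ (for probit and logistic), keep that value within $e^{O(n)}$ of $\sup\Lik$; meanwhile a volume estimate bounds the prior mass of the ball from below, giving $-\log\mu_{\theta,\lambda}(\text{ball})=O(p\log M)$ with $M=(\theta^{-2}+1)\sigma_{\max}(X_\lambda^\top X_\lambda)$. Combining yields $D=O[(n+p)\log M]$. This prior-mass estimate, and arranging the logarithmic factor to come out as $\log M$, is the step I expect to be the main obstacle.

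Finally, for \hyperref[A1]{(A1)} I would control $\|g(\cdot\mid\alpha^{(1)},\beta^{(1)},y)-g(\cdot\mid\alpha^{(2)},\beta^{(2)},y)\|_{\TV}$ through the mean shifts $\Delta_i=(\alpha^{(1)}-\alpha^{(2)})+x_i^\top(\beta^{(1)}-\beta^{(2)})$. Writing $\Delta=(\Delta_1,\dots,\Delta_n)^\top$ and $v=(\alpha^{(1)}-\alpha^{(2)},\,\lambda(\beta^{(1)}-\beta^{(2)}))$, one has $\Delta=X_\lambda v$, while the hypothesis of (A1) is exactly $\|v\|_2^2<\delta^2$, so $\sum_i\Delta_i^2\le\sigma_{\max}(X_\lambda^\top X_\lambda)\,\delta^2$. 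For probit and logistic I would import the per-coordinate total-variation bounds of \cite{lee2024fast}, which give a total-variation distance $\lesssim\sqrt{\sigma_{\max}(X_\lambda^\top X_\lambda)}\,\delta$; the inverse-Gaussian computation for the heteroskedastic model carries an extra factor and gives $\lesssim\sqrt{n\,\sigma_{\max}(X_\lambda^\top X_\lambda)}\,\delta$. Taking $\epsilon=1/2$ and requiring both $\sqrt2\,p^{1/4}\delta^{1/2}\le1/4$ and the total-variation term $\le1/4$ then permits the choices $\delta^2=\Theta(1/\max\{\sigma_{\max}(X_\lambda^\top X_\lambda),p\})$ for probit and logistic and $\delta^2=\Theta(1/\max\{n\,\sigma_{\max}(X_\lambda^\top X_\lambda),p\})$ for the Gaussian model. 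Substituting $\epsilon=\Theta(1)$, these values of $\delta^2$, and $D=O[(n+p)\log M]$ into the bound of Theorem \ref{thm:main}, and noting that the minimum there is attained by its second argument since that argument is eventually less than $1$, produces exactly the stated orders for $1-\rho$; the formula \eqref{eq:C-omega} for $\tilde C(\omega)$ is inherited verbatim from Theorem \ref{thm:main} and is identical across the three models.
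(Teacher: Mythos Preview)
Your overall structure---verify \hyperref[A1]{(A1)} and \hyperref[A2]{(A2)} in each model and substitute into Theorem~\ref{thm:main}---is exactly what the paper does (it packages the verifications as Lemmas~\ref{lem:close-examples} and~\ref{lem:iso-exmaples}). Two points are worth flagging.

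For \hyperref[A2]{(A2)(iii)} the paper takes a different and sharper route than your ball localization. It first records a global quadratic upper bound $\ell(\alpha,\beta)\le \ell_0+\eta^\top(\alpha,\lambda\beta)+\tfrac{L}{2}(\|\alpha\|_2^2+\lambda^2\|\beta\|_2^2)$ with $L=\sigma_{\max}(X_\lambda^\top X_\lambda)$ (from a Taylor expansion plus the Hessian bound $\nabla^2\ell\preceq X^\top X$ for probit and logistic, and from $|u|\le(u^2+1)/2$ for the Gaussian model). It then lower-bounds $m(y)$ by an explicit product of one-dimensional Gaussian/Laplace integrals (Lemma~\ref{lem:density-ratio}), which yields $D=O\bigl(n+\tfrac{d}{2}\log((L+\theta^2)/\theta^2)+p\log\sqrt{L}\bigr)$. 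Your ball-of-radius-$c/\sqrt{\sigma_{\max}}$ argument, by contrast, picks up the volume factor $1/\Gamma((p+1)/2)$ (or equivalently the $\sqrt{p+1}$ shrinkage of an inscribed cube), which injects an extra $p\log p$ into $D$. This is not dominated by $p\log M$ unless $\log p=O(\log M)$; since $M\ge\sigma_{\max}\ge n$ here, that holds only when $p$ is at most polynomial in $n$. So the obstacle you anticipate is real, and the paper circumvents it by integrating coordinate-wise rather than localizing.

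A smaller point: for the heteroskedastic Gaussian model the total-variation bound on $g$ is not linear in $\delta$. The paper (Lemma~\ref{lem:tv-invgauss}) shows $\|g(\cdot\mid\alpha^{(1)},\beta^{(1)},y)-g(\cdot\mid\alpha^{(2)},\beta^{(2)},y)\|_{\TV}\le\sqrt{2\gamma}\,n^{1/4}[\sigma_{\max}(X_\lambda^\top X_\lambda)]^{1/4}\delta^{1/2}$, because the KL between $\mathrm{InvGaussian}(\gamma/|b|,\gamma^2)$ laws is only linear in $|b-b'|$; a quadratic KL bound would carry a factor $1/|b|$, which blows up when a residual $y_i-\alpha-x_i^\top\beta$ vanishes. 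Fortunately, solving either your claimed bound or the correct one for the threshold $\le 1/4$ gives the same order $\delta^2=\Theta\bigl(1/(n\,\sigma_{\max}(X_\lambda^\top X_\lambda))\bigr)$, so your final scaling for $1-\rho$ is unaffected.
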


\begin{proof}
    Proposition \ref{pro:rho-examples} consists of special cases of Theorem \ref{thm:main}.
    Theorem \ref{thm:main} gives the formula of $\rho$ and $\tilde{C}(\omega)$ under two generic conditions, \hyperref[A1]{(A1)} and \hyperref[A2]{(A2)}.
    These two conditions are verified for each of the three models in Section \ref{ssec:a-special}.
    In particular, \hyperref[A1]{(A1)} is verified in Lemma \ref{lem:close-examples}, and \hypertarget{A2}{(A2)} is verified in Lemma \ref{lem:iso-exmaples}.
\end{proof}

From Proposition \ref{pro:rho-examples}, one can immediately obtain asymptotic bounds on $t(\omega, \bar{\epsilon})$ through the formula
\[
t(\omega, \bar{\epsilon}) = O\left[\frac{\log \tilde{C}(\omega) - \log \bar{\epsilon}}{1-\rho} \right]
\]
if $\log \tilde{C}(\omega)$ can be bounded.

The warmness parameter $\tilde{C}(\omega)$ measures how far away the initial distribution $\omega(\cdot)$ is from the target distribution $\pi_{A,B \mid y}(\cdot \mid y)$.
We now propose, for each model, a  feasible initial density $\omega(\cdot)$, and bound $\log \tilde{C}(\omega)$.
The following proposition is proved in Appendix \ref{app:proofs}.

\begin{proposition} \label{pro:initial}
    For $\eta \in \mathbb{R}^{1+p}$, $L \in [0,\infty)$, $\alpha \in \mathbb{R}$, and $\beta \in \mathbb{R}^p$, let
    \[
    \begin{aligned}
        \omega_{\eta, L}(\alpha, \beta) = & \left( \frac{L + \theta^2}{2\pi} \right)^{1/2} \left( \frac{L + 1}{2\pi} \right)^{p/2} \lambda^p \\
        & \quad \quad \exp \left\{ - \frac{1}{2} \left[ \left( \begin{array}{c} \alpha \\ \lambda \beta \end{array} \right) + V_L \eta \right]^{\top}  V_L^{-1}  \left[ \left( \begin{array}{c} \alpha \\ \lambda \beta \end{array} \right) + V_L \eta \right]  \right\},
    \end{aligned}
    \]
    where 
    \[
    V_L = \left( \begin{array}{cc}
        1/(L + \theta^2) & 0  \\
        0 & I_p/(L+1)
    \end{array} \right),
    \]
    where $I_p$ is the $p \times p$ identity matrix.
    That is, $\omega_{\eta, L}(\cdot)$ is the density function of $(A_*^{\top}, B_*^{\top})^{\top}$, where $(A_*^{\top}, \lambda B_*^{\top})^{\top}$ is normally distributed with mean $-V_L \eta$ and variance $V_L$.
    Then each of the following statements holds.

    \begin{enumerate}
        \item In the context of Bayesian probit lasso model, take 
        \[
        \eta = -\sqrt{\frac{2}{\pi}} \sum_{i=1}^n (2y_i - 1) \left( \begin{array}{c} 1 \\ x_i/\lambda \end{array} \right), \quad L = \sigma_{\max}(X_{\lambda}^{\top} X_{\lambda}).
        \]
        Then, with $\tilde{C}(\omega)$ defined in \eqref{eq:C-omega}, one has
        \begin{equation} \label{eq:C-omega-examples}
        \log \tilde{C}(\omega_{\eta, L}) = O\left( \log M'' + p \log M' + n \right),
        \end{equation}
        where $M' = \sigma_{\max}(X_{\lambda}^{\top} X_{\lambda}) + 1$ and $M'' = \sigma_{\max}(X_{\lambda}^{\top} X_{\lambda})/\theta^2 + 1$.

        \item In the context of Bayesian logistic model, take
        \[
        \eta = -\sum_{i=1}^n \frac{2y_i - 1}{2} \left( \begin{array}{c} 1 \\ x_i/\lambda \end{array} \right), \quad L = \frac{1}{4} \sigma_{\max}(X_{\lambda}^{\top} X_{\lambda}).
        \]
        Then \eqref{eq:C-omega-examples} holds.

        \item In the context of Bayesian Gaussian model with heteroskedasticity, take
        \[
        \eta = 0, \quad L = \gamma \, \sigma_{\max}(X_{\lambda}^{\top} X_{\lambda}). 
        \]
        Assume that $\gamma = O(1)$, and that $\sum_{i=1}^n |y_i| = O(n)$.
        Then \eqref{eq:C-omega-examples} holds.
    \end{enumerate}
\end{proposition}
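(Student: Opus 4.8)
The plan is to control the chi-square–type warmness in \eqref{eq:C-omega} by a single pointwise ratio. Write $\pi(\alpha,\beta) = \pi_{A,B\mid Y}(\alpha,\beta\mid y)$ and $\mu = \mu_{\theta,\lambda}$; recall $d=1$ in all three models. Expanding the square and using $\int (\omega/\pi)\,\omega = \int \omega^2/\pi$ together with $\int \omega = 1$ gives
\[
4\tilde C(\omega)^2 = \int_{\mathbb{R}^{1+p}} \frac{\omega^2}{\pi}\,\df(\alpha,\beta) - 1 \le \sup_{(\alpha,\beta)}\frac{\omega(\alpha,\beta)}{\pi(\alpha,\beta)},
\]
so it suffices to show $\log\sup(\omega/\pi) = O(\log M'' + p\log M' + n)$. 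Since $\pi = f(y\mid\alpha,\beta)\,\mu/Z_\pi$ with $Z_\pi = \int f\mu$ and $\ell = -\log f$, I factor $\sup(\omega/\pi) = Z_\pi\,\sup(\omega e^{\ell}/\mu)$. The normalizing constant is harmless: $Z_\pi \le \sup f$, which is $\le 1$ for the probit and logistic likelihoods and $\le (\gamma/2)^n$ for the Gaussian likelihood, contributing at most $O(n)$ to the log when $\gamma = O(1)$.

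The substance is the bound on $\log\sup(\omega e^{\ell}/\mu) = \log\omega + \ell - \log\mu$. I would pass to the coordinates $u = (\alpha, \lambda\beta^{\top})^{\top}\in\mathbb{R}^{1+p}$, so that each linear predictor becomes $\alpha + x_i^{\top}\beta = (X_\lambda)_i\,u$ and $\lambda\|\beta\|_1 = \|u_{2:p+1}\|_1$, where $(X_\lambda)_i$ is the $i$th row of $X_\lambda$. In these coordinates $\omega_{\eta,L}$ is Gaussian with precision $V_L^{-1} = \mathrm{diag}(L+\theta^2, (L+1)I_p)$ and mean $-V_L\eta$, while $-\log\mu$ contributes $\tfrac{\theta^2}{2}u_1^2 + \|u_{2:p+1}\|_1$ plus a constant. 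A direct computation of the ratio of normalizing constants of $\omega_{\eta,L}$ and $\mu$ shows the $\lambda^p$ factors cancel and leaves $\tfrac12\log\frac{L+\theta^2}{\theta^2} + \tfrac{p}{2}\log(L+1) + O(p)$, which is $O(\log M'' + p\log M')$ for the stated choices of $L$ (using $M'' = \sigma_{\max}(X_\lambda^{\top}X_\lambda)/\theta^2 + 1$, $M' = \sigma_{\max}(X_\lambda^{\top}X_\lambda)+1$, and $\sigma_{\max}(X_\lambda^{\top}X_\lambda)\ge n$). It then remains to bound the $u$-dependent part.

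For the probit and logistic models the key is that $\ell$ is convex with uniformly bounded curvature: the second derivative of the $i$th summand of $\ell$ in $\psi_i$ is $-(\log\Phi)''\le 1$ for probit and $\sigma(\psi_i)(1-\sigma(\psi_i))\le\tfrac14$ for logistic (with $\sigma$ the logistic sigmoid). Hence, via \hyperref[A2]{(A2)}(ii), the Hessian of $\ell$ in the $u$-coordinates is dominated by $\kappa X_\lambda^{\top}X_\lambda$ with $\kappa = 1$ (probit) or $\kappa=\tfrac14$ (logistic), giving the global majorant
\[
\ell(u) \le \ell(0) + \nabla_u\ell(0)^{\top}u + \tfrac{\kappa}{2}\,u^{\top}X_\lambda^{\top}X_\lambda\,u.
\]
The design of $L$ and $\eta$ then produces two exact cancellations. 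Taking $L = \kappa\,\sigma_{\max}(X_\lambda^{\top}X_\lambda)$ makes the quadratic form of $\log\omega + \ell - \log\mu$ equal to $\tfrac12 u^{\top}[\kappa X_\lambda^{\top}X_\lambda - \mathrm{diag}(L,(L+1)I_p)]u \preceq -\tfrac12\|u_{2:p+1}\|_2^2$, the $\theta^2$ from the Gaussian prior being absorbed by the first entry of $V_L^{-1}$; taking $\eta = \nabla_u\ell(0)$ (one checks $\partial_{\psi_i}\ell|_0 = -\sqrt{2/\pi}(2y_i-1)$ for probit and $-(2y_i-1)/2$ for logistic, matching the stated vectors) cancels the linear term $-u^{\top}\eta$ coming from the mean of $\omega$. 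What survives is the Laplace contribution, so
\[
\log\sup\frac{\omega e^{\ell}}{\mu} \le O(\log M'' + p\log M') + \ell(0) + \sup_{v\in\mathbb{R}^p}\Big[-\tfrac12\|v\|_2^2 + \|v\|_1\Big],
\]
where the last supremum is $p/2$ and $\ell(0) = n\log 2 = O(n)$, yielding \eqref{eq:C-omega-examples}.

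For the heteroskedastic Gaussian model the argument is softer, since $\ell = -n\log(\gamma/2) + \gamma\sum_i|y_i - (X_\lambda)_i u|$ is only Lipschitz and contributes no quadratic term. The quadratic part of $\log\omega + \ell - \log\mu$ is then $-\tfrac12 u^{\top}\mathrm{diag}(L,(L+1)I_p)u \preceq -\tfrac{L}{2}\|u\|_2^2$, which with $L = \gamma\,\sigma_{\max}(X_\lambda^{\top}X_\lambda) > 0$ is strictly negative definite and dominates the linear growth $\gamma\|y\|_1 + \gamma\|X_\lambda u\|_1 + \|u_{2:p+1}\|_1$; using $\sup_u[-\tfrac{L}{2}\|u\|_2^2 + C\|u\|_2] = C^2/(2L)$ together with $\gamma = O(1)$, $\sum_i|y_i| = O(n)$, and $\sigma_{\max}(X_\lambda^{\top}X_\lambda)\ge n$ gives $O(n+p)$, while the normalizer again gives $O(\log M'' + p\log M')$. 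I expect the main obstacle to be the probit/logistic estimate: one must establish the global quadratic majorant of $\ell$ with the precise matrix $X_\lambda^{\top}X_\lambda$ (relying on log-concavity and the smooth approximation in \hyperref[A2]{(A2)}), confirm that the choices of $L$ and $\eta$ produce both cancellations simultaneously, and handle the degenerate $u_1$-direction where the quadratic form only vanishes rather than being negative. The latter is harmless precisely because the linear term in that direction was cancelled by $\eta$, leaving a bounded expression; tracking the normalizing constants carefully so as to land on $\log M''$ and $\log M'$ rather than looser $\log\sigma_{\max}$ factors is the remaining bookkeeping.
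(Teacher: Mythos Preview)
Your approach is essentially the paper's: bound $\tilde C(\omega)$ by $\sup_{\alpha,\beta}\omega/\pi$, control $\log\omega+\ell-\log\mu$ via a global quadratic majorant $\ell(u)\le\ell_0+\eta^\top u+\tfrac{L}{2}\|u\|_2^2$ in the coordinates $u=(\alpha,\lambda\beta^\top)^\top$ so that the linear and quadratic parts cancel against those of $\omega_{\eta,L}$, and then handle the leftover Laplace term $\|u_{2:p+1}\|_1$ (the paper absorbs it via $|v|\le v^2/2+1/2$ before cancelling, you maximize $-\tfrac12\|v\|_2^2+\|v\|_1$ afterward; both give $p/2$).

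One fixable slip in your heteroskedastic Gaussian argument: if you lump $\|u_{2:p+1}\|_1$ together with $\gamma\|X_\lambda u\|_1$ into a single $C\|u\|_2$ and control it by $-\tfrac{L}{2}\|u\|_2^2$, the resulting $C^2/(2L)$ contains a term $p/(2\gamma\,\sigma_{\max})$, which is not $O(n+p)$ under only $\gamma=O(1)$ (no lower bound on $\gamma$ is assumed). The fix is immediate: peel off $-\tfrac12\|u_{2:p+1}\|_2^2$ from the $(L{+}1)$-block of the quadratic to pair with $\|u_{2:p+1}\|_1$ exactly as you did for probit/logistic (giving $p/2$ independent of $\gamma$), or follow the paper and majorize $\ell$ itself quadratically via $\gamma|y_i-(X_\lambda)_i u|\le \tfrac{\gamma}{2}((X_\lambda)_i u)^2+\gamma|y_i|+\tfrac{\gamma}{2}$, after which the same cancellation scheme as in the other two models applies verbatim.
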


Recommendations in the literature typically suggest taking $\lambda$ to be $O(n)$ and $\Omega(n^{1/2})$ \cite{zhao2006model,van2008high}.
On the other hand, it is common to assume $\sigma_{\max}(X^{\top}X) = O(pn)$ \cite{johnstone2001distribution,lee2024fast}.
Then we have the following more explicit asymptotic bounds on the mixing time.

\begin{corollary}
    Assume that $\lambda = \Omega(n^c)$ for some constant $c \in [0,1]$, $\sigma_{\max}(X^{\top}X) = O(pn)$, and that $\theta = \Omega(1)$.
    Let $(\bar{\epsilon}_N)_{N=1}^{\infty}$ be a sequence of positive numbers.
    Then each of the following holds:
    \begin{enumerate}
        \item For the Bayesian probit lasso model, when the initial distribution $\omega(\cdot)$ is given in Proposition \ref{pro:initial},
        \begin{equation} \label{t-probit}
        t(\omega, \bar{\epsilon}) = O \left\{ (p \log M_1 +n - \log \bar{\epsilon}) [(pn^{1-c} + n)(n+p)^2 (\log M_1)^2]  \right\},
        \end{equation}
        where $M_1 = pn^{1-c} + n$.

        \item  For the Bayesian logistic lasso model, when the initial distribution $\omega(\cdot)$ is given in Proposition \ref{pro:initial}, the mixing time satisfies \eqref{t-probit}.

        \item For the heteroskedastic Gaussian model, assume further that $\gamma = O(1)$, and that $\sum_{i=1}^n |y_i| = O(n)$.
        Then, when the initial distribution $\omega(\cdot)$ is given in Proposition \ref{pro:initial}, 
        \[
        t(\omega, \bar{\epsilon}) = O \left\{ (p \log M_1 +n - \log \bar{\epsilon}) [n (pn^{1-c} + n)(n+p)^2 (\log M_1)^2]  \right\}.
        \]
    \end{enumerate}
\end{corollary}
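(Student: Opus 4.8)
The plan is to derive the corollary as a direct consequence of Propositions \ref{pro:rho-examples} and \ref{pro:initial}, substituted into the mixing-time formula $t(\omega, \bar{\epsilon}) = O[(\log \tilde{C}(\omega) - \log \bar{\epsilon})/(1-\rho)]$ recorded just after Proposition \ref{pro:rho-examples}. No new probabilistic content is needed; the task is to translate the matrix quantity $\sigma_{\max}(X_{\lambda}^{\top} X_{\lambda})$ and the hyperparameter-dependent constants $M$, $M'$, $M''$ into explicit functions of $n$, $p$, and $c$ under the three standing assumptions $\lambda = \Omega(n^c)$, $\sigma_{\max}(X^{\top}X) = O(pn)$, and $\theta = \Omega(1)$. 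I would treat the probit and logistic models together, since Proposition \ref{pro:rho-examples} assigns them the identical bound \eqref{eq:rho-scale-probit}, and then handle the heteroskedastic Gaussian model by tracking the single extra factor of $n$.

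The key estimate is the bound on $\sigma_{\max}(X_{\lambda}^{\top} X_{\lambda})$. Here I would \emph{not} use the crude final inequality $\sigma_{\max}(X_{\lambda}^{\top} X_{\lambda}) \le \sigma_{\max}(X^{\top}X)$ from \eqref{ine:X-lambda}, as that yields only an $O(pn)$ factor. Instead I retain the sharper intermediate form $\sigma_{\max}(X_{\lambda}^{\top} X_{\lambda}) \le \sigma_{\max}(X^{\top}X)/\lambda + (\lambda-1)n/\lambda \le \sigma_{\max}(X^{\top}X)/\lambda + n$ and substitute $\sigma_{\max}(X^{\top}X) = O(pn)$ together with $\lambda = \Omega(n^c)$ to obtain $\sigma_{\max}(X_{\lambda}^{\top} X_{\lambda}) = O(pn^{1-c} + n) = O(M_1)$, where $M_1 = pn^{1-c} + n$. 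Since $M_1 \ge pn^{1-c} \ge p$ (because $n \ge 1$ and $c \le 1$) and $M_1 \ge n$, it follows that $\max\{\sigma_{\max}(X_{\lambda}^{\top} X_{\lambda}), p\} = O(M_1)$, exactly the factor in the denominator of \eqref{eq:rho-scale-probit}; for the Gaussian model the corresponding factor $\max\{n\sigma_{\max}(X_{\lambda}^{\top} X_{\lambda}), p\}$ becomes $O(nM_1)$.

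Next I would reduce every logarithmic factor to a power of $\log M_1$. The assumption $\theta = \Omega(1)$ gives $\theta^{-2} + 1 = O(1)$ and $\min\{1, \theta^2\} = \Omega(1)$, so $M = (\theta^{-2}+1)\sigma_{\max}(X_{\lambda}^{\top} X_{\lambda}) = O(M_1)$, and likewise $M' = \sigma_{\max}(X_{\lambda}^{\top} X_{\lambda}) + 1 = O(M_1)$ and $M'' = \sigma_{\max}(X_{\lambda}^{\top} X_{\lambda})/\theta^2 + 1 = O(M_1)$. Using $M_1 \ge \max\{p, n\}$ once more gives $\log(pn) = \log p + \log n \le 2\log M_1$, so $\log M$, $\log M'$, and $\log M''$ are all $O(\log M_1)$, uniformly over $c \in [0,1]$. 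Feeding these into Proposition \ref{pro:initial} yields $\log \tilde{C}(\omega) = O(\log M'' + p\log M' + n) = O(p\log M_1 + n)$, and feeding them into \eqref{eq:rho-scale-probit} yields $1/(1-\rho) = O[M_1 (n+p)^2 (\log M_1)^2]$ for the probit and logistic models, respectively $O[nM_1(n+p)^2(\log M_1)^2]$ for the Gaussian model.

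Finally I would assemble the pieces: plugging $\log \tilde{C}(\omega) = O(p\log M_1 + n)$ and the bound on $1/(1-\rho)$ into the mixing-time formula produces \eqref{t-probit} for the probit and logistic models, and the analogous expression with the additional factor $n$ for the heteroskedastic Gaussian model; the extra hypotheses $\gamma = O(1)$ and $\sum_i |y_i| = O(n)$ are precisely those required by item 3 of both Propositions \ref{pro:rho-examples} and \ref{pro:initial}, and are simply carried through. There is no genuinely hard step here—the argument is essentially bookkeeping. The one point deserving care is the $\sigma_{\max}(X_{\lambda}^{\top} X_{\lambda})$ estimate: the improvement from the naive $O(pn)$ to $O(pn^{1-c}+n)$ is exactly where the penalty scaling $\lambda = \Omega(n^c)$ enters, and it must be extracted from the intermediate form of \eqref{ine:X-lambda} rather than its final form. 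I would also confirm that the comparison $\log(pn) = O(\log M_1)$ holds with an absolute constant, so that the bounds are uniform across the full range $c \in [0,1]$, which it does since $M_1$ dominates both $p$ and $n$.
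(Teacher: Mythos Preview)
Your proposal is correct and follows essentially the same route as the paper's proof: extract $\sigma_{\max}(X_{\lambda}^{\top} X_{\lambda}) = O(pn^{1-c}+n)$ from the intermediate form of \eqref{ine:X-lambda}, then plug into Propositions \ref{pro:rho-examples} and \ref{pro:initial} and the mixing-time formula. The paper's own proof is a two-line sketch of exactly this argument; your version simply spells out the bookkeeping (e.g., $M_1 \ge \max\{p,n\}$ so all the logarithmic constants collapse to $O(\log M_1)$) that the paper leaves implicit.
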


\begin{proof}
    By \eqref{ine:X-lambda}, $\sigma_{\max}(X_{\lambda}^{\top} X_{\lambda}) = O(p n^{1-c} + n)$.
    The desired result then follows from Propositions \ref{pro:rho-examples} and \ref{pro:initial}. 
\end{proof}

\subsection{Conditions \hyperref[A1]{(A1)} and \hyperref[A2]{(A2)} in special cases} \label{ssec:a-special}

In this subsection, we establish the conditions in Theorem \ref{thm:main} in the context of probit, logistic, and heteroskedastic Gaussian lasso models.
Proposition \ref{pro:rho-examples} then follows.

In the context of probit and logistic lasso, we can utilize existing results from \cite{lee2024fast} to derive \hyperref[A1]{(A1)}.
In the context of heteroskedastic Gaussian lasso, one can establish \hyperref[A1]{(A1)} using Pinsker's inequality.

\begin{lemma} \label{lem:close-examples}
    Each of the following statements holds.
    \begin{enumerate}
        \item In the data augmentation algorithm for Bayesian probit lasso model, $g(\cdot \mid \alpha, \beta, y)$ has the form \eqref{eq:g-probit}.
        Then \hyperref[A1]{(A1)} holds with $\epsilon = 1/2$ and 
        \[
            \delta = \min \left\{ \frac{1}{2 \sqrt{\sigma_{\max}(X_{\lambda}^{\top}X_{\lambda})} } , \, \frac{1}{32 \sqrt{p}} \right\}.
        \]

        \item In the data augmentation algorithm for Bayesian logistic lasso model, $g(\cdot \mid \alpha, \beta, y)$ has the form \eqref{eq:g-logistic}.
        Then \hyperref[A1]{(A1)} holds with $\epsilon = 1/2$ and
        \[
            \delta = \min \left\{ \frac{1}{\sqrt{\sigma_{\max}(X_{\lambda}^{\top}X_{\lambda})} } , \, \frac{1}{32 \sqrt{p}} \right\}.
        \]

        \item In the data augmentation algorithm for Bayesian Gaussian lasso model with heteroskedasticity, $g(\cdot \mid \alpha, \beta, y)$ is given in \eqref{eq:g-gaussian}.
        Then \hyperref[A1]{(A1)} holds with $\epsilon = 1/2$ and
        \[
            \delta = \min \left\{ \frac{1}{32 \gamma \sqrt{n \sigma_{\max}(X_{\lambda}^{\top}X_{\lambda})}  }, \, \frac{1}{32 \sqrt{p}} \right\}.
        \]
    \end{enumerate}
\end{lemma}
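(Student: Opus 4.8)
The three parts share a common reduction, which I would isolate first. Writing $w = (\alpha^{(1)}-\alpha^{(2)}, \lambda(\beta^{(1)}-\beta^{(2)})^{\top})^{\top} \in \mathbb{R}^{1+p}$ (recall $d=1$ in all three models), the hypothesis $\|\alpha^{(1)}-\alpha^{(2)}\|_2^2 + \lambda^2\|\beta^{(1)}-\beta^{(2)}\|_2^2 < \delta^2$ is exactly $\|w\|_2^2 < \delta^2$, and for each $i$ the difference of linear predictors is $(\alpha^{(1)}+x_i^{\top}\beta^{(1)}) - (\alpha^{(2)}+x_i^{\top}\beta^{(2)}) = (1, x_i^{\top}/\lambda)\,w$, so that the vector of these differences equals $X_{\lambda}w$. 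Hence
\[
\sum_{i=1}^n \left[ (\alpha^{(1)}+x_i^{\top}\beta^{(1)}) - (\alpha^{(2)}+x_i^{\top}\beta^{(2)}) \right]^2 = \|X_{\lambda}w\|_2^2 \le \sigma_{\max}(X_{\lambda}^{\top}X_{\lambda})\,\|w\|_2^2 < \sigma_{\max}(X_{\lambda}^{\top}X_{\lambda})\,\delta^2 .
\]
In every case I take $\epsilon = 1/2$. Since each prescribed $\delta$ satisfies $\delta \le 1/(32\sqrt p)$, the second summand in \hyperref[A1]{(A1)} obeys $\sqrt2\,p^{1/4}\,\delta^{1/2} \le \sqrt2\,p^{1/4}(32\sqrt p)^{-1/2} = 1/4$. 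It therefore suffices to bound the total variation term by $1/4$, and this is where the other half of each $\delta$, the one featuring $\sigma_{\max}(X_{\lambda}^{\top}X_{\lambda})$, enters.

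For the probit and logistic models I would recycle the total-variation bounds for $g(\cdot \mid \alpha, \beta, y)$ derived in \cite{lee2024fast} in connection with the transition law \eqref{eq:k}. In both cases the $i$th factor of the product (a truncated normal for \eqref{eq:g-probit}, a Pólya--Gamma law for \eqref{eq:g-logistic}) depends on $(\alpha,\beta)$ only through the predictor $\alpha + x_i^{\top}\beta$, and its squared Hellinger distance is \emph{Lipschitz} in that predictor with no degeneracy; multiplicativity of the Hellinger affinity over $i$ then upgrades this to a bound of the form $\|g(\cdot\mid\alpha^{(1)},\beta^{(1)},y)-g(\cdot\mid\alpha^{(2)},\beta^{(2)},y)\|_{\TV} \le C\,\|X_{\lambda}w\|_2$, with $C\le 1/2$ for probit and $C\le 1/4$ for logistic. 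Feeding in the displayed bound on $\|X_{\lambda}w\|_2^2$ together with $\delta \le 1/(2\sqrt{\sigma_{\max}(X_{\lambda}^{\top}X_{\lambda})})$ (respectively $\delta \le 1/\sqrt{\sigma_{\max}(X_{\lambda}^{\top}X_{\lambda})}$) yields a total variation of at most $1/4$, finishing these two parts.

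The heteroskedastic Gaussian case is the substantive one. By \eqref{eq:g-gaussian}, $g(\cdot\mid\alpha,\beta,y)$ is a product of inverse-Gaussian laws whose $i$th factor has density proportional to $z_i^{-3/2}\exp[-a_i z_i/2 - \gamma^2/(2z_i)]$ with natural parameter $a_i = (y_i - \alpha - x_i^{\top}\beta)^2$. I would compute the divergence between two such factors explicitly: writing $s_i = |y_i - \alpha^{(1)} - x_i^{\top}\beta^{(1)}|$ and $t_i = |y_i - \alpha^{(2)} - x_i^{\top}\beta^{(2)}|$, the Hellinger affinity equals $\exp\{-\gamma[\sqrt{(s_i^2+t_i^2)/2} - (s_i+t_i)/2]\}$, and the bracketed exponent $e_i$ obeys $e_i \le \gamma(s_i-t_i)^2/[2(s_i+t_i)] \le \gamma|s_i-t_i|/2 \le \gamma|d_i|/2$, where $d_i$ is the $i$th predictor difference. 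Multiplicativity of the affinity over $i$ and the inequality $\|P-Q\|_{\TV}^2 \le 1-\rho^2 \le 2\sum_i e_i$ give $\|g(\cdot\mid\alpha^{(1)},\beta^{(1)},y)-g(\cdot\mid\alpha^{(2)},\beta^{(2)},y)\|_{\TV} \le \sqrt{\gamma\sum_i |d_i|}$; Cauchy--Schwarz yields $\sum_i|d_i| \le \sqrt n\,\|X_{\lambda}w\|_2 \le \sqrt{n\,\sigma_{\max}(X_{\lambda}^{\top}X_{\lambda})}\,\delta$, so the choice $\delta \le 1/(32\gamma\sqrt{n\,\sigma_{\max}(X_{\lambda}^{\top}X_{\lambda})})$ forces a total variation of at most $1/\sqrt{32} < 1/4$. (Equivalently one may phrase the per-coordinate step through Pinsker's inequality, using the explicit Kullback--Leibler formula $\gamma(\sqrt{a_i^{(1)}}-\sqrt{a_i^{(2)}})^2/(2\sqrt{a_i^{(1)}})$ oriented toward the larger of $s_i,t_i$.)

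I expect the last estimate to be the main obstacle. Unlike probit and logistic, the inverse-Gaussian family is singular as $a_i \downarrow 0$, i.e. as a predictor approaches $y_i$: its Fisher information in $a_i$ diverges, so the divergence between two nearby factors scales like the \emph{square root} of the predictor difference rather than linearly. Two points demand care. First, I must not bound the product total variation by a coordinatewise sum of square roots, as that would reintroduce a spurious factor growing in $n$; the single outer square root has to be kept and applied only after the exponents $e_i$ are summed, so that Cauchy--Schwarz converts $\sum_i d_i^2 = \|X_{\lambda}w\|_2^2$ into a clean $\sqrt n$ factor. Second, the asymmetry of the Kullback--Leibler divergence, which blows up in the unfavorable orientation, must be sidestepped, either by working with the symmetric Hellinger affinity as above or by orienting each per-coordinate divergence toward the larger of $s_i,t_i$. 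This square-root scaling, absorbed through Cauchy--Schwarz, is precisely what accounts for the extra $\sqrt n$ and the factor $\gamma$ in the Gaussian value of $\delta$, in contrast to the probit and logistic cases.
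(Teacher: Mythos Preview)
Your proof is correct. For parts 1 and 2 it coincides with the paper's, which likewise cites the bounds from \cite{lee2024fast} of the form $\|g(\cdot\mid\alpha^{(1)},\beta^{(1)},y)-g(\cdot\mid\alpha^{(2)},\beta^{(2)},y)\|_{\TV}\le C\|X_\lambda w\|_2$ (with $C=1/2$ and $C=1/4$) and combines them with $\delta\le 1/(32\sqrt p)$ exactly as you do.

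For part 3 your route differs from the paper's. The paper invokes a standalone technical lemma (Lemma~\ref{lem:tv-invgauss}) whose proof goes through Pinsker's inequality and the KL divergence, oriented coordinatewise toward the larger of $s_i,t_i$---the alternative you mention parenthetically. To manage the KL asymmetry the paper splits $\{1,\dots,n\}$ into $J_1=\{i:s_i\ge t_i\}$ and its complement, applies Pinsker to each subproduct separately, and recombines via the subadditivity $\|\cdot\|_{\TV}\le \|\cdot\|_{\TV}+\|\cdot\|_{\TV}$ (Lemma~\ref{lem:tv-tensor}); this two-square-root detour costs a factor $\sqrt 2$, yielding $\sqrt{2\gamma}\,n^{1/4}\|X_\lambda w\|_2^{1/2}$. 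Your Hellinger-affinity computation is symmetric, so no index splitting is needed and you land on $\sqrt{\gamma}\,n^{1/4}\|X_\lambda w\|_2^{1/2}$, a constant tighter. Both arguments then apply Cauchy--Schwarz to $\sum_i|d_i|$ in the same way to extract the $n^{1/4}$ factor; your diagnosis of where the extra $\sqrt n$ and the square-root scaling arise matches the paper's mechanism exactly.
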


\begin{proof}
    Suppose that $g(\cdot \mid \alpha, \beta, y)$ has the form \eqref{eq:g-probit}.
        In \cite{lee2024fast}, it is shown that, for $\alpha^{(1)}$ and $\alpha^{(2)}$ in $\mathbb{R}^d$ and $\beta^{(1)}$ and $\beta^{(2)}$ in $\mathbb{R}^p$,
        \[
        \begin{aligned}
        &\left\| g(\cdot \mid \alpha^{(1)}, \beta^{(1)}, y) - g(\cdot \mid \alpha^{(2)}, \beta^{(2)}, y) \right\|_{\TV} \\
        \leq & \frac{1}{2} \sqrt{ \left( \begin{array}{c} \alpha^{(1)} - \alpha^{(2)} \\ \beta^{(1)} - \beta^{(2)} \end{array} \right)^{\top} X^{\top} X \left( \begin{array}{c} \alpha^{(1)} - \alpha^{(2)} \\ \beta^{(1)} - \beta^{(2)} \end{array} \right) } \\
        =& \frac{1}{2} \sqrt{ \left( \begin{array}{c} \alpha^{(1)} - \alpha^{(2)} \\ \lambda\beta^{(1)} - \lambda\beta^{(2)} \end{array} \right)^{\top} X_{\lambda}^{\top} X_{\lambda} \left( \begin{array}{c} \alpha^{(1)} - \alpha^{(2)} \\ \lambda \beta^{(1)} - \lambda \beta^{(2)} \end{array} \right) } \\
        \leq& \frac{1}{2} \sqrt{\sigma_{\max}(X_{\lambda}^{\top} X_{\lambda})} \, \sqrt{\|\alpha^{(1)} - \alpha^{(2)}\|_2^2 + \lambda^2 \|\beta^{(1)} - \beta^{(2)}\|_2^2}.
        \end{aligned}
        \]
        This yields \hyperref[A1]{(A1)} with $\epsilon$ and $\delta$ given in 1.

        Suppose instead that $g(\cdot \mid \beta, y)$ has the form \eqref{eq:g-logistic}.
        In \cite{lee2024fast}, it is shown that, for $\alpha^{(1)}$ and $\alpha^{(2)}$ in $\mathbb{R}^d$ and $\beta^{(1)}$ and $\beta^{(2)}$ in $\mathbb{R}^p$,
        \[
        \begin{aligned}
        &\left\| g(\cdot \mid \alpha^{(1)}, \beta^{(1)}, y) - g(\cdot \mid \alpha^{(2)}, \beta^{(2)}, y) \right\|_{\TV} \\
        \leq& \frac{1}{4} \sqrt{\sigma_{\max}(X_{\lambda}^{\top} X_{\lambda})} \, \sqrt{\|\alpha^{(1)} - \alpha^{(2)}\|_2^2 + \lambda^2 \|\beta^{(1)} - \beta^{(2)}\|_2^2}.
        \end{aligned}
        \]
        This yields \hyperref[A1]{(A1)} with $\epsilon$ and $\delta$ given in 2.

        Finally, assume that $g(\cdot \mid \alpha, \beta, y)$ is given in \eqref{eq:g-gaussian}.
        By Lemma \ref{lem:tv-invgauss} in Appendix \ref{app:technical},
        \[
        \begin{aligned}
        &\left\| g(\cdot \mid \alpha^{(1)}, \beta^{(1)}, y) - g(\cdot \mid \alpha^{(2)}, \beta^{(2)}, y) \right\|_{\TV} \\
        \leq & \sqrt{2 \gamma}  \, n^{1/4} \left[ \sum_{i=1}^n (y_i - \alpha^{(1)} - x_i^{\top} \beta^{(1)} - y_i + \alpha^{(2)} + x_i^{\top} \beta^{(2)})^2 \right]^{1/4} \\
        =& \sqrt{2 \gamma}  \, n^{1/4} \left[ \left( \begin{array}{c} \alpha^{(1)} - \alpha^{(2)} \\ \lambda\beta^{(1)} - \lambda\beta^{(2)} \end{array} \right)^{\top} X_{\lambda}^{\top} X_{\lambda} \left( \begin{array}{c} \alpha^{(1)} - \alpha^{(2)} \\ \lambda \beta^{(1)} - \lambda \beta^{(2)} \end{array} \right)  \right]^{1/4} \\
        \leq& \sqrt{2 \gamma}  \, n^{1/4} \left[\sigma_{\max}(X_{\lambda}^{\top}X_{\lambda}) \right]^{1/4} \left(\|\alpha^{(1)} - \alpha^{(2)}\|_2^2 + \lambda^2 \|\beta^{(1)} - \beta^{(2)}\|_2^2\right)^{1/4}.
        \end{aligned}
        \]
        This yields \hyperref[A1]{(A1)} with $\epsilon$ and $\delta$ given in 3.
\end{proof}

The following lemma, which is proved in Appendix \ref{app:proofs}, can be used to verify Condition \hyperref[A2]{(A2)}.(iii).

\begin{lemma} \label{lem:density-ratio}
    Suppose that there exists a number $C \in (0,\infty)$ (which may depend on $y$) such that $\Lik(y \mid \alpha, \beta) \leq C$ for $\alpha \in \mathbb{R}^d$ and $\beta \in \mathbb{R}^p$.
    Assume further that there exist $\ell_0 \in \mathbb{R}$, $L \in [0,\infty)$, and $\eta \in \mathbb{R}^{d+p}$ (all of which may depend on $y$, $\theta$, and $\lambda$) such that, for $\alpha \in \mathbb{R}^d$ and $\beta \in \mathbb{R}^p$, 
        \begin{equation} \label{ine:ell-bound}
        \ell(\alpha, \beta) \leq \ell_0 + \eta^{\top} \left( \begin{array}  {c}
             \alpha \\ \lambda \beta
            \end{array} \right) + \frac{L}{2} (\|\alpha\|_2^2 + \lambda^2 \|\beta\|_2^2).
        \end{equation}
    Then \hyperref[A2]{(A2)}.(iii) holds with
    \[
    D = \log C + \ell_0 + \frac{d}{2} \log \left( \frac{L + \theta^2}{\theta^2} \right) + p \max \left\{ \log \left( \frac{4}{\sqrt{5}-1} \right), \; \log \frac{2 \sqrt{L}}{ \sqrt{5}-1 } \right\}.
    \]
\end{lemma}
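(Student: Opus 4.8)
The plan is to compute the density ratio directly and reduce the whole statement to a lower bound on the posterior's normalizing constant. Write $Z = \int_{\mathbb{R}^{d+p}} \Lik(y\mid\alpha,\beta)\,\exp\!\big(-\tfrac{\theta^2}{2}\|\alpha\|_2^2 - \lambda\|\beta\|_1\big)\,\df(\alpha,\beta)$ for the normalizing constant of \eqref{eq:posterior}. The Gaussian factor $e^{-\theta^2\|\alpha\|_2^2/2}$ and the Laplace factor $e^{-\lambda\|\beta\|_1}$ appear identically in $\pi_{A,B\mid Y}$ and in $\mu_{\theta,\lambda}$, so they cancel and one is left with
\[
\frac{\pi_{A,B\mid Y}(\alpha,\beta\mid y)}{\mu_{\theta,\lambda}(\alpha,\beta)} = \frac{\Lik(y\mid\alpha,\beta)}{Z}\cdot\frac{(2\pi)^{d/2}}{\theta^d}\cdot\frac{2^p}{\lambda^p}.
\]
Bounding the numerator by $\Lik(y\mid\alpha,\beta)\le C$, the task collapses to producing a lower bound on $Z$; the resulting $D$ is then read off by taking logarithms.

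First I would plug the quadratic majorant \eqref{ine:ell-bound} into $Z$. Since $\Lik = e^{-\ell}$ and $\ell(\alpha,\beta)\le \ell_0 + \eta^\top\binom{\alpha}{\lambda\beta} + \tfrac{L}{2}(\|\alpha\|_2^2 + \lambda^2\|\beta\|_2^2)$, the integrand is bounded below by a Gaussian-times-Laplace expression that factorizes across the $\alpha$-block and the $p$ coordinates of $\beta$. Writing $\eta=(\eta_\alpha,\eta_\beta)$, the $\alpha$-integral is an exact Gaussian integral with precision $L+\theta^2$; completing the square and discarding the factor $\exp\!\big(\|\eta_\alpha\|_2^2/[2(L+\theta^2)]\big)\ge 1$ gives $\int e^{-\eta_\alpha^\top\alpha - \frac{L+\theta^2}{2}\|\alpha\|_2^2}\df\alpha \ge (2\pi/(L+\theta^2))^{d/2}$, which is exactly what produces the term $\tfrac{d}{2}\log\tfrac{L+\theta^2}{\theta^2}$. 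After the substitution $u=\lambda\beta_j$ (contributing $\lambda^{-1}$ per coordinate, hence $\lambda^{-p}$ in total, which cancels against $2^p/\lambda^p$ to leave $2^p$), each $\beta_j$-integral becomes $\int_{\mathbb{R}} e^{-\eta_{\beta,j}u-\frac{L}{2}u^2-|u|}\,\df u$. Because the weight $e^{-\frac{L}{2}u^2-|u|}$ is even, the linear tilt only helps: it equals $\int e^{-\frac{L}{2}u^2-|u|}\cosh(\eta_{\beta,j}u)\,\df u \ge \int e^{-\frac{L}{2}u^2-|u|}\,\df u$, so $\eta_\beta$ drops out of the bound.

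The remaining, and main, step is a sharp lower bound on the scalar integral $J(L):=\int_{\mathbb{R}} e^{-\frac{L}{2}u^2-|u|}\,\df u$, for which I would establish $J(L)\ge (\sqrt5-1)\min\{1/2,\,1/\sqrt L\}$. Extracting the clean golden-ratio constant is the delicate part: a crude box estimate (the integrand exceeds $e^{-1}$ on $|u|\le\min\{1/2,1/\sqrt L\}$) yields only the weaker $2/e$, so a tighter argument is required. I would linearize the absolute value via the elementary bound $|u|\le\tfrac12(au^2+a^{-1})$, valid for every $a>0$, turning $J(L)$ into a genuine Gaussian integral $\ge e^{-1/(2a)}\sqrt{2\pi/(L+a)}$, and then optimize over $a$. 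The stationarity condition is $a^2-a-L=0$, whose root $a=\tfrac12(1+\sqrt{1+4L})$ is precisely where $\sqrt5$ (at $L=1$) enters; a case split at $L=4$, according to whether $\min\{1/2,1/\sqrt L\}$ equals $1/2$ or $1/\sqrt L$, then yields the two branches $\tfrac{4}{\sqrt5-1}$ and $\tfrac{2\sqrt L}{\sqrt5-1}$ of the maximum. I expect this scalar estimate to be the only genuinely nonroutine ingredient.

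Finally I would assemble the pieces. Combining the $\alpha$- and $\beta$-bounds gives $Z \ge e^{-\ell_0}\,(2\pi/(L+\theta^2))^{d/2}\,\lambda^{-p}\,J(L)^p$. Substituting this together with $\Lik\le C$ into the displayed ratio, taking logarithms, and collecting the $\lambda$- and $2$-factors reproduces
\[
D = \log C + \ell_0 + \frac{d}{2}\log\frac{L+\theta^2}{\theta^2} + p\max\left\{\log\frac{4}{\sqrt5-1},\;\log\frac{2\sqrt L}{\sqrt5-1}\right\}.
\]
Everything outside the scalar integral is bookkeeping of Gaussian normalizations and the cancellation of $\lambda$ and the prior factors; the single point demanding care is matching the constant in the bound on $J(L)$ to the stated maximum.
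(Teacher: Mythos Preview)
Your approach is correct and, in one respect, cleaner than the paper's. The overall skeleton---cancel the common prior factors, bound $\Lik\le C$, reduce to a lower bound on the normalizing constant, handle the $\alpha$-block by an exact Gaussian integral, and reduce each $\beta_j$-factor to a one-dimensional integral after the change of variables $u=\lambda\beta_j$---is identical to the paper's. The genuine difference is in how the scalar integral $\int_{\mathbb{R}} e^{-\eta_j u - \frac{L}{2}u^2 - |u|}\,\df u$ is bounded below. The paper keeps the tilt $\eta_j$, splits the integral at $0$, rewrites each half as a Mills ratio $\Phi(-x)/\phi(x)$, and then invokes Birnbaum's inequality $\Phi(-x)/\phi(x)\ge(\sqrt{4+x^2}-x)/2$ together with a case split on $|\eta_j|\lessgtr 1$; the constant $\sqrt5-1$ arises there as the value of Birnbaum's bound at $x=1$. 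Your route is more elementary: the $\cosh$ observation removes $\eta_j$ at the outset (this step is absent from the paper and shortens the argument considerably), and the quadratic majorant $|u|\le\tfrac12(au^2+a^{-1})$ avoids any special-function inequality.

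One caution: the constant $\sqrt5-1$ is \emph{not} produced intrinsically by your optimization. The remark that ``$\sqrt5$ enters at $L=1$'' is a coincidence---at $L=1$ your optimal $a$ is $(1+\sqrt5)/2$, but your resulting bound $e^{-1/(2a^*)}\sqrt{2\pi}/a^*$ is simply a number ($\approx 1.14$) that happens to exceed $(\sqrt5-1)/2$. To close the argument you must verify analytically that $e^{-1/(2a)}\sqrt{2\pi/(L+a)}\ge(\sqrt5-1)\min\{1/2,1/\sqrt L\}$ for some admissible $a$; this is true (for instance $a=2$ when $L\le4$ and $a=\sqrt L$ when $L\ge4$ already suffice), but it is a separate calculus check rather than a consequence of the stationarity equation. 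With that verification supplied, your proof goes through and yields exactly the stated $D$.
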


\begin{lemma} \label{lem:iso-exmaples}
    Each of the following statements holds.
    \begin{enumerate}
        \item In the data augmentation algorithm for Bayesian probit model, $\Lik(\cdot \mid \alpha, \beta)$ has the form \eqref{eq:lik-probit}.
        Then \hyperref[A2]{(A2)} holds with
        \[
        \begin{aligned}
            D =& n \log 2 + \frac{1}{2} \log \left[ \frac{\sigma_{\max}(X_{\lambda}^{\top}X_{\lambda}) + \theta^2}{\theta^2} \right] + \\
            & \quad \quad p \max \left\{ \log \left( \frac{4}{\sqrt{5}-1} \right),\, \log \frac{2 \sqrt{\sigma_{\max}(X_{\lambda}^{\top}X_{\lambda})}}{\sqrt{5}-1 } \right\}.
        \end{aligned}
        \]

        \item In the data augmentation algorithm for Bayesian logistic model, $\Lik(\cdot \mid \alpha, \beta)$ has the form \eqref{eq:lik-logistic}.
        Then \hyperref[A2]{(A2)} holds with
        \[
        \begin{aligned}
            D =& n \log 2 + \frac{1}{2} \log \left[ \frac{\sigma_{\max}(X_{\lambda}^{\top}X_{\lambda})/4 + \theta^2}{\theta^2} \right] + \\
            & \quad \quad p \max \left\{ \log \left( \frac{4}{\sqrt{5}-1} \right), \, \log \frac{\sqrt{\sigma_{\max}(X_{\lambda}^{\top}X_{\lambda})}}{\sqrt{5}-1} \right\}.
        \end{aligned}
        \]

        \item In the data augmentation algorithm for Gaussian linear model with heteroskedasticity, $\Lik(\cdot \mid \alpha, \beta)$ is given in \eqref{eq:g-gaussian}.
        Then \hyperref[A2]{(A2)} holds with
        \[
        \begin{aligned}
            D =& \gamma \left( \sum_{i=1}^n |y_i| + \frac{n}{2} \right) + \frac{1}{2} \log \left[ \frac{\gamma \, \sigma_{\max}(X_{\lambda}^{\top}X_{\lambda}) + \theta^2 }{\theta^2} \right] + \\
            & \quad \quad p \max \left\{ \log \frac{4}{\sqrt{5}-1}, \, \log \frac{2 \sqrt{\gamma \, \sigma_{\max}(X_{\lambda}^{\top}X_{\lambda})}}{\sqrt{5}-1} \right\}.
        \end{aligned}
        \]
    \end{enumerate}
\end{lemma}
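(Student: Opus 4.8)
The plan is to treat parts (i) and (ii) of \hyperref[A2]{(A2)} directly, and to reduce part (iii) to Lemma \ref{lem:density-ratio} by exhibiting, for each model, a uniform likelihood bound $\Lik(y\mid\alpha,\beta)\le C$ together with a quadratic majorant of $\ell$ of the form \eqref{ine:ell-bound}. Convexity in (i) is immediate: for probit each summand $-\log\Phi(\alpha+x_i^{\top}\beta)$ and $-\log[1-\Phi(\alpha+x_i^{\top}\beta)]$ is convex because the standard normal CDF is log-concave; for logistic the summand $\log(1+e^{\alpha+x_i^{\top}\beta})-y_i(\alpha+x_i^{\top}\beta)$ is convex as a log-sum-exp plus a linear term; and for the heteroskedastic Gaussian model $\gamma\sum_i|y_i-\alpha-x_i^{\top}\beta|$ is a sum of convex functions. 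For (ii), the probit and logistic negative log-likelihoods are already $C^{\infty}$, so one may take the constant approximating sequence; the Gaussian $\ell$ is nonsmooth at the kinks of $|\cdot|$, but $|u|\le\sqrt{u^2+\varepsilon^2}\downarrow|u|$ provides convex twice-differentiable majorants as $\varepsilon\downarrow0$.

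The heart of the argument is the quadratic bound \eqref{ine:ell-bound}, which I would establish model by model. For probit I would use the classical fact that $t\mapsto-\log\Phi(t)$ has second derivative $r(t)[r(t)+t]\in[0,1]$, where $r=\phi/\Phi$ is the inverse Mills ratio; since $-\log[1-\Phi(t)]=-\log\Phi(-t)$ shares this bound, a second-order Taylor expansion about $0$ gives $-\log\Phi(t)\le\log2-\sqrt{2/\pi}\,t+t^2/2$ and $-\log[1-\Phi(t)]\le\log2+\sqrt{2/\pi}\,t+t^2/2$. Summing over $i$ with $t=\alpha+x_i^{\top}\beta$, and writing $\sum_i(\alpha+x_i^{\top}\beta)^2=u^{\top}X_{\lambda}^{\top}X_{\lambda}u\le\sigma_{\max}(X_{\lambda}^{\top}X_{\lambda})\,(\|\alpha\|_2^2+\lambda^2\|\beta\|_2^2)$ for $u=(\alpha,\lambda\beta^{\top})^{\top}$, yields \eqref{ine:ell-bound} with $\ell_0=n\log2$, linear coefficient $\eta=-\sqrt{2/\pi}\sum_i(2y_i-1)(1,\,x_i^{\top}/\lambda)^{\top}$ matching Proposition \ref{pro:initial}, and $L=\sigma_{\max}(X_{\lambda}^{\top}X_{\lambda})$. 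For logistic the same strategy applies with $\psi(t)=\log(1+e^t)$ satisfying $\psi''\le1/4$, giving $\psi(t)-y_i t\le\log2-\tfrac{2y_i-1}{2}t+\tfrac18 t^2$ and hence $\ell_0=n\log2$, $L=\tfrac14\sigma_{\max}(X_{\lambda}^{\top}X_{\lambda})$, with the matching $\eta$. For the Gaussian model I would use $|u|\le\tfrac12(1+u^2)$ with the triangle inequality $|y_i-w_i|\le|y_i|+\tfrac12(1+w_i^2)$, $w_i=\alpha+x_i^{\top}\beta$, producing $\eta=0$, $L=\gamma\,\sigma_{\max}(X_{\lambda}^{\top}X_{\lambda})$, and $\ell_0=-n\log(\gamma/2)+\gamma\sum_i|y_i|+\gamma n/2$.

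It then remains to supply $C$ and invoke Lemma \ref{lem:density-ratio} with $d=1$. For probit and logistic every likelihood factor is a probability in $[0,1]$, so $\Lik\le1$ and $\log C=0$; for the Gaussian model $\Lik\le(\gamma/2)^n$, so $\log C=n\log(\gamma/2)$, which exactly cancels the $-n\log(\gamma/2)$ term in $\ell_0$ to leave $\gamma(\sum_i|y_i|+n/2)$. Substituting the triples $(C,\ell_0,L)$ into the formula of Lemma \ref{lem:density-ratio} reproduces the three displayed expressions for $D$; in the logistic case one uses $2\sqrt{L}=\sqrt{\sigma_{\max}(X_{\lambda}^{\top}X_{\lambda})}$ to match the stated final term.

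I expect the only genuinely delicate step to be the probit second-derivative bound $r(t)[r(t)+t]\in[0,1]$, i.e.\ that the inverse Mills ratio is $1$-Lipschitz with derivative in $[-1,0]$. The inclusion must be argued uniformly in $t$, and the boundary regime $t\to-\infty$ (where $r(t)\sim-t$) is the tight one; it is precisely this global curvature bound that makes the centered Taylor majorant valid everywhere and pins down the coefficient in $L=\sigma_{\max}(X_{\lambda}^{\top}X_{\lambda})$. Everything else reduces to routine Taylor estimation together with the eigenvalue inequality feeding through $X_{\lambda}^{\top}X_{\lambda}$.
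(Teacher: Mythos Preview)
Your proposal is correct and follows essentially the same route as the paper: verify \hyperref[A2]{(A2)}(i)--(ii) directly, then obtain \hyperref[A2]{(A2)}(iii) by feeding a uniform bound $C$ and a quadratic majorant \eqref{ine:ell-bound} into Lemma~\ref{lem:density-ratio} with $d=1$. The only cosmetic differences are that the paper cites \cite{lee2024fast} for the probit Hessian bound $\nabla^2\ell \preceq X^{\top}X$ (which is exactly your Mills-ratio inequality $r(t)[r(t)+t]\in[0,1]$), and that for the heteroskedastic Gaussian smoothing the paper invokes its Lemma~\ref{lem:approx} rather than your $\sqrt{u^2+\varepsilon^2}$ majorant; both choices are equivalent for the purpose at hand.
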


\begin{proof}
    Suppose that $\Lik(\cdot \mid \alpha,  \beta)$ has the form \eqref{eq:lik-probit}.
    It is well-known that \hyperref[A2]{(A2)}.(i) holds \cite{pratt1981concavity}, and \hyperref[A2]{(A2)}.(ii) holds because $\ell(\alpha, \beta)$ is twice differentiable.
    To verify \hyperref[A2]{(A2)}.(iii), note that $\Lik(y \mid \alpha, \beta) \leq 1$.
    Moreover, it holds that $X^{\top} X - \nabla^2 \ell(\alpha, \beta)$ is always positive semi-definite, where $\nabla^2 \ell(\cdot)$ is the Hessian matrix of the function $\ell(\cdot)$; see \cite{lee2024fast}.
    Then, by a Taylor expansion, with $\tilde{\eta} = \nabla \ell(0,0) \in \mathbb{R}^{1+p}$,
    \[
    \begin{aligned}
        \ell(\alpha, \beta) &\leq n \log 2 + \tilde{\eta}^{\top} \left( \begin{array}{c} \alpha \\ \beta \end{array} \right) + \frac{1}{2} \left( \begin{array}{c} \alpha \\ \beta \end{array} \right)^{\top} X^{\top} X \left( \begin{array}{c} \alpha \\ \beta \end{array} \right) \\
        &= n \log 2 + \left( \begin{array}{cccc} \tilde{\eta}_1 & \tilde{\eta}_2/\lambda & \cdots & \tilde{\eta}_{1+d}/\lambda \end{array} \right)^{\top} \left( \begin{array}{c} \alpha \\ \lambda\beta \end{array} \right) + \frac{1}{2} \left( \begin{array}{c} \alpha \\ \lambda\beta \end{array} \right)^{\top} X_{\lambda}^{\top} X_{\lambda} \left( \begin{array}{c} \alpha \\ \lambda\beta \end{array} \right).
    \end{aligned}
    \]
    By Lemma \ref{lem:density-ratio}, \hyperref[A2]{(A2)}.(iii) holds with $D$ given in 1.

    The proof of statement 2. is analogous.

    Suppose that $\Lik(\cdot \mid \alpha, \beta)$ is given in \eqref{eq:g-gaussian}.
    Then 
    \[
    \ell(\alpha, \beta) = - n \log \left( \frac{\gamma}{2} \right) + \gamma \sum_{i=1}^n |y_i - \alpha - x_i^{\top} \beta|.
    \]
    The condition \hyperref[A2]{(A2)}.(i) clearly holds, and \hyperref[A2]{(A2)}.(ii) follows from Lemma \ref{lem:approx}.
    To establish \hyperref[A2]{(A2)}.(iii), note that $\Lik(y \mid \alpha, \beta) \leq (\gamma/2)^n$.
    Moreover,
    \[
    \begin{aligned}
        \ell(\alpha, \beta) &\leq -n \log \left( \frac{\gamma}{2} \right) + \gamma \sum_{i=1}^n \left( |y_i|  + |\alpha + x_i^{\top} \beta| \right) \\
        &\leq -n \log \left( \frac{\gamma}{2} \right) + \gamma \sum_{i=1}^n \left( |y_i|  + \frac{1}{2} |\alpha + x_i^{\top} \beta|^2 + \frac{1}{2} \right) \\
        &= -n \log \left( \frac{\gamma}{2} \right) + \gamma \left( \sum_{i=1}^n |y_i| + \frac{n}{2} \right) + \frac{\gamma}{2} \left( \begin{array}{c} \alpha \\ \lambda\beta \end{array} \right)^{\top} X_{\lambda}^{\top} X_{\lambda} \left( \begin{array}{c} \alpha \\ \lambda\beta \end{array} \right).
    \end{aligned} 
    \]
    By Lemma \ref{lem:density-ratio}, \hyperref[A2]{(A2)}.(iii) holds with $D$ given in 3.
\end{proof}

\section{Proof of Theorem \ref{thm:main}} \label{sec:proof}

\subsection{Close coupling, isoperimetry, and spectral gap}

Similar to \cite{lee2024fast} and multiple recent works, this paper follows the strategy of combining a close coupling condition with an isoperimetric inequality to obtain a convergence bound.

We first introduce the notion of spectral gap.
The spectral gap of the reversible transition density $\Mtd$ is
\[
\begin{aligned}
    \mathrm{Gap}(\Mtd) =& 1 - \sup_\varphi \int_{\mathbb{R}^{d+p}} \langle \Mtd \varphi, \varphi \rangle,
\end{aligned}
\]
where $\langle \Mtd \varphi, \varphi \rangle$ is
\[
\int_{\mathbb{R}^{d+p}} \left[ \int_{\mathbb{R}^{d+p}}  \varphi(\alpha', \beta') \,  \Mtd((\alpha, \beta), (\alpha', \beta')) \, \df (\alpha', \beta') \right] \, \varphi(\alpha, \beta) \, \pi_{A, B \mid Y}(\alpha, \beta \mid y) \,\df (\alpha, \beta),
\]
and the supremum is taken over all functions $\varphi: \mathbb{R}^{d + p}$ such that
\[
\int_{\mathbb{R}^{d+p}} \varphi(\alpha, \beta) \, \pi_{A,B \mid Y}(\alpha, \beta \mid y) \, \df (\alpha, \beta) = 0, \quad \int_{\mathbb{R}^{d+p}} \varphi(\alpha, \beta)^2 \, \pi_{A,B \mid Y}(\alpha, \beta \mid y) \, \df (\alpha, \beta) = 1.
\]

For a generic Markov transition density, its spectral gap is closely related to the rate of convergence of the corresponding Markov chain.
We refer the reader to \cite{roberts1997geometric} and \cite{qin2025convergence} for a detailed examination.
The next lemma is a direct consequence of Theorem 2.1 from \cite{roberts1997geometric}.

\begin{lemma} \label{lem:roberts} \cite{roberts1997geometric}
    Suppose that $\Mtd$ is positive semi-definite in the sense that $\langle \Mtd \varphi, \varphi \rangle$ is always non-negative.
    Then, for $t \geq 1$,
    \[
    \|\omega \, \Mtd^t(\cdot) - \pi_{A,B \mid Y} (\cdot \mid y)\|_{\TV} \leq \tilde{C}(\omega) \, \rho^t,
    \]
    where $\omega(\cdot)$ is any initial density, 
    \[
    \tilde{C}(\omega) = \frac{1}{2} \sqrt{\int_{\mathbb{R}^{d+p}} \left[ \frac{\omega(\alpha, \beta)}{\pi_{A,B \mid Y}(\alpha, \beta \mid y)} - 1 \right]^2 \pi_{A, B \mid Y}(\alpha, \beta \mid y) \, \df (\alpha, \beta)} \, ,
    \]
    and $\rho = 1 - \mathrm{Gap}(\Mtd) \in [0,1]$.
\end{lemma}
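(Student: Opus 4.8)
The plan is to run the standard $L^2(\pi_{A,B\mid Y})$ spectral argument, exploiting that reversibility makes $\Mtd$ self-adjoint on $L^2(\pi_{A,B\mid Y})$ and that the positive semi-definiteness hypothesis confines the spectrum (on mean-zero functions) to $[0,\rho]$. First I would recast $\Mtd$ as a bounded self-adjoint operator $K$ on the Hilbert space $L^2(\pi_{A,B\mid Y}(\cdot\mid y))$ acting by $(K\varphi)(\alpha,\beta) = \int \varphi(\alpha',\beta')\,\Mtd((\alpha,\beta),(\alpha',\beta'))\,\df(\alpha',\beta')$. Reversibility of $\Mtd$ with respect to $\pi_{A,B\mid Y}$ makes $K$ self-adjoint; Markovianity gives $\|K\|\le 1$ together with $K\mathbf{1}=\mathbf{1}$, where $\mathbf{1}$ denotes the constant function.

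The core computation tracks the density ratio. Writing $w_0 = \omega/\pi_{A,B\mid Y}$ for the initial ratio, I would use reversibility to show that the density of $\omega\,\Mtd^t$ relative to $\pi_{A,B\mid Y}$ equals $K^t w_0$; concretely, for one step one rewrites $(\omega\,\Mtd)(\cdot)/\pi_{A,B\mid Y}(\cdot)$ using $\pi_{A,B\mid Y}(\alpha,\beta)\,\Mtd((\alpha,\beta),(\alpha',\beta')) = \pi_{A,B\mid Y}(\alpha',\beta')\,\Mtd((\alpha',\beta'),(\alpha,\beta))$, so the ratio evolves under the same self-adjoint $K$, and one iterates. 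Next I would decompose $w_0 = \mathbf{1} + h$, where $h = \omega/\pi_{A,B\mid Y} - 1$ satisfies $\int h\,\df\pi_{A,B\mid Y} = \int \omega - \int \pi_{A,B\mid Y} = 0$, so $h$ lies in the mean-zero subspace $V_0$. Since $K\mathbf{1}=\mathbf{1}$, this gives $K^t w_0 - \mathbf{1} = K^t h$, and a Cauchy--Schwarz (Jensen) step bounds the total variation distance:
\[
\|\omega\,\Mtd^t(\cdot) - \pi_{A,B\mid Y}(\cdot\mid y)\|_{\TV} = \frac{1}{2}\int |K^t w_0 - 1|\,\df\pi_{A,B\mid Y} \le \frac{1}{2}\,\|K^t h\|_{L^2(\pi_{A,B\mid Y})},
\]
while by definition $\|h\|_{L^2(\pi_{A,B\mid Y})} = 2\tilde{C}(\omega)$.

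It remains to bound $\|K^t h\|$. I would observe that $V_0$ is the orthogonal complement of the constants and is $K$-invariant (by stationarity, $\int K\varphi\,\df\pi_{A,B\mid Y} = \int \varphi\,\df\pi_{A,B\mid Y}$), so $K|_{V_0}$ is self-adjoint. This is exactly where positive semi-definiteness is used: since $\langle \Mtd\varphi,\varphi\rangle \ge 0$ for all $\varphi$, the quadratic form of $K|_{V_0}$ is nonnegative, so its operator norm equals $\sup_{\varphi\in V_0,\,\|\varphi\|=1}\langle \Mtd\varphi,\varphi\rangle = 1 - \mathrm{Gap}(\Mtd) = \rho$; without this sign information one would also need to control the most negative point of the spectrum. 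The spectral theorem for bounded self-adjoint operators then yields $\|(K|_{V_0})^t\| = \rho^t$, hence $\|K^t h\|_{L^2(\pi_{A,B\mid Y})} \le \rho^t\|h\|_{L^2(\pi_{A,B\mid Y})} = 2\rho^t\tilde{C}(\omega)$. Combined with the Cauchy--Schwarz bound above, this gives $\|\omega\,\Mtd^t(\cdot) - \pi_{A,B\mid Y}(\cdot\mid y)\|_{\TV} \le \tilde{C}(\omega)\,\rho^t$, closing the estimate; the fact that $\rho \in [0,1]$ follows since $0 \le \sup_\varphi \langle \Mtd\varphi,\varphi\rangle \le \|K\| \le 1$.

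I expect the main obstacle to be the operator-norm identification, namely verifying rigorously that the self-adjoint quadratic-form supremum over $V_0$ coincides with the operator norm and that the $t$-th power has norm exactly $\rho^t$, together with the measure-theoretic care needed for the $L^2$ spectral theory to apply, chiefly that $\omega/\pi_{A,B\mid Y} \in L^2(\pi_{A,B\mid Y})$, which is precisely the finiteness of $\tilde{C}(\omega)$. Since this is exactly Theorem 2.1 of \cite{roberts1997geometric}, an equally valid route is simply to check that the reversible, positive semi-definite setting here satisfies that theorem's hypotheses and to invoke it directly.
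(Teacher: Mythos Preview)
Your proposal is correct. The paper does not actually prove this lemma at all: it simply states that the result ``is a direct consequence of Theorem 2.1 from \cite{roberts1997geometric}'' and cites that reference in the lemma heading. Your detailed $L^2(\pi)$ spectral argument is precisely the standard proof underlying that theorem, and you even note at the end that one may alternatively just invoke the cited result directly, which is exactly what the paper does.
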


To derive Theorem \ref{thm:main}, we bound $\mathrm{Gap}(\Mtd)$ by combining a close coupling condition with an isoperimetric inequality.
In Section \ref{ssec:close}, we establish the following close coupling condition.

\begin{lemma}[Close coupling] \label{lem:close}
    Assume that \hyperref[A1]{(A1)} holds.
    Then, when $\|\alpha^{(1)} - \alpha^{(2)}\|_2^2 + \lambda^2 \|\beta^{(1)} - \beta^{(2)}\|_2^2 < \delta^2$, 
    \[
    \|\Mtd((\alpha^{(1)}, \beta^{(1)}), \cdot) - \Mtd((\alpha^{(2)}, \beta^{(2)}), \cdot)\|_{\TV} \leq 1 - \epsilon.
    \]
\end{lemma}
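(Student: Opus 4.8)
The plan is to peel apart the two stages of Algorithm~\ref{alg:da} and reduce the claim to a total variation bound between the conditional laws of the augmentation variables $(Z,T)$. I would first note that $\Mtd((\alpha,\beta),\cdot)$ is generated by drawing $(z,\tau)$ from $\pi_{Z,T\mid A,B,Y}(\cdot\mid\alpha,\beta,y)$ and then drawing the next state from $\pi_{A,B\mid Z,T,Y}(\cdot\mid z,\tau,y)$, and crucially that the second stage is a Markov kernel $K$ from $(z,\tau)$ to $(\alpha',\beta')$ that does not depend on the source $(\alpha,\beta)$. Thus each $\Mtd((\alpha^{(i)},\beta^{(i)}),\cdot)$ is the image of $\pi_{Z,T\mid A,B,Y}(\cdot\mid\alpha^{(i)},\beta^{(i)},y)$ under the same kernel $K$, and the data-processing inequality for total variation gives
\[
\|\Mtd((\alpha^{(1)},\beta^{(1)}),\cdot)-\Mtd((\alpha^{(2)},\beta^{(2)}),\cdot)\|_{\TV}\le\|\pi_{Z,T\mid A,B,Y}(\cdot\mid\alpha^{(1)},\beta^{(1)},y)-\pi_{Z,T\mid A,B,Y}(\cdot\mid\alpha^{(2)},\beta^{(2)},y)\|_{\TV}.
\]

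Next I would exploit the product structure of the conditional law. The displayed density $\pi_{Z,T\mid A,B,Y}(z,\tau\mid\alpha,\beta,y)$ factors into $g(z\mid\alpha,\beta,y)$ times $\prod_{j=1}^p h(\tau_j\mid\beta_j)$, where $h(\tau\mid b)\propto\tau^{-1/2}\exp(-b^2/(2\tau)-\lambda^2\tau/2)\ind(\tau>0)$; that is, given $(\alpha,\beta,y)$ the variable $Z$ and the coordinates $T_1,\dots,T_p$ are mutually independent. Applying the triangle inequality for total variation of product measures bounds the right-hand side above by
\[
\|g(\cdot\mid\alpha^{(1)},\beta^{(1)},y)-g(\cdot\mid\alpha^{(2)},\beta^{(2)},y)\|_{\TV}+\left\|\bigotimes_{j=1}^p h(\cdot\mid\beta_j^{(1)})-\bigotimes_{j=1}^p h(\cdot\mid\beta_j^{(2)})\right\|_{\TV}.
\]
The first term is precisely the quantity appearing in \hyperref[A1]{(A1)}, so it remains only to show the product term is at most $\sqrt{2}\,p^{1/4}\delta^{1/2}$.

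For the product term I would pass to the squared Hellinger distance $H^2(P,Q)=1-\int\sqrt{pq}$, using the standard relations $\|P-Q\|_{\TV}\le\sqrt{2}\,H(P,Q)$ and, for products, $H^2(\bigotimes_jP_j,\bigotimes_jQ_j)\le\sum_jH^2(P_j,Q_j)$ (from multiplicativity of the affinity $\int\sqrt{pq}$). This reduces matters to a per-coordinate estimate, which is the technical heart of the proof. Here I would compute the affinity between $h(\cdot\mid b_1)$ and $h(\cdot\mid b_2)$ in closed form using the elementary identity $\int_0^\infty\tau^{-1/2}\exp(-a/\tau-b\tau)\,\df\tau=\sqrt{\pi/b}\,\exp(-2\sqrt{ab})$ (equivalently, the Generalized Inverse Gaussian normalizing constant via a half-integer Bessel function). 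Since both the normalizing constants of the $h(\cdot\mid b_i)$ and the affinity integral reduce to this formula, one obtains
\[
\int_0^\infty\sqrt{h(\tau\mid b_1)\,h(\tau\mid b_2)}\,\df\tau=\exp\left\{-\lambda\left[\sqrt{\tfrac{b_1^2+b_2^2}{2}}-\tfrac{|b_1|+|b_2|}{2}\right]\right\}.
\]
Applying $1-e^{-x}\le x$ together with the elementary inequality $\sqrt{(u^2+v^2)/2}-(u+v)/2\le|u-v|/2$ (seen by writing the left side as $\sqrt{A^2+B^2}-A$ with $A=(u+v)/2$, $B=(u-v)/2$, and using $\sqrt{A^2+B^2}-A\le|B|$) then yields $H^2(h(\cdot\mid\beta_j^{(1)}),h(\cdot\mid\beta_j^{(2)}))\le\lambda|\beta_j^{(1)}-\beta_j^{(2)}|$.

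Finally, summing over $j$, applying Cauchy--Schwarz, and invoking the hypothesis $\lambda^2\|\beta^{(1)}-\beta^{(2)}\|_2^2<\delta^2$ gives $\sum_{j=1}^pH^2(h(\cdot\mid\beta_j^{(1)}),h(\cdot\mid\beta_j^{(2)}))\le\lambda\sqrt{p}\,\|\beta^{(1)}-\beta^{(2)}\|_2<\sqrt{p}\,\delta$, so the product term is at most $\sqrt{2}\,(\sqrt{p}\,\delta)^{1/2}=\sqrt{2}\,p^{1/4}\delta^{1/2}$. Combining this with the definition of \hyperref[A1]{(A1)} closes the argument. I expect the affinity computation and the accompanying algebraic inequality to be the main obstacle; the data-processing reduction, the product-structure split, and the concluding Cauchy--Schwarz step are routine. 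It is also worth remarking that the $\sqrt{2}$ slack in \hyperref[A1]{(A1)} comfortably absorbs the constants produced here, since the sharper coordinate bound $H^2\le\tfrac{\lambda}{2}|\beta_j^{(1)}-\beta_j^{(2)}|$ (which $||b_1|-|b_2||\le|b_1-b_2|$ would give) leads to the even smaller product bound $p^{1/4}\delta^{1/2}$.
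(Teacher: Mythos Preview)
Your proof is correct and structurally matches the paper: both reduce to the total variation between the conditional laws of the augmentation variables via data processing (the paper's Lemma~\ref{lem:close-1}), then split off the $g$ piece and the product-of-$T_j$ piece using TV tensorization.

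The genuine difference is in how the product term is bounded. The paper (Lemma~\ref{lem:tv-invgauss}) works with the inverse-Gaussian parametrization $\xi_j=1/\tau_j$, uses Pinsker's inequality and the KL divergence, and---because KL is asymmetric---must partition the coordinates into $J_1=\{j:|\beta_j^{(1)}|\ge|\beta_j^{(2)}|\}$ and its complement, apply TV tensorization once more across the two groups, and compute $\KL(h(\cdot,b)\,\|\,h(\cdot,b'))\le\lambda|b-b'|$ only in the ``right'' direction. Your route via the Hellinger affinity is cleaner: the closed-form affinity integral (a half-integer Bessel/GIG normalizer) is symmetric in $b_1,b_2$, so no coordinate split is needed, and the product bound $H^2(\bigotimes P_j,\bigotimes Q_j)\le\sum_j H^2(P_j,Q_j)$ handles all coordinates at once. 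As you observe, the sharper per-coordinate estimate $H^2\le\tfrac{\lambda}{2}\big||b_1|-|b_2|\big|$ even yields $p^{1/4}\delta^{1/2}$ without the leading $\sqrt{2}$, improving on the paper's constant; the paper's KL computation gives exactly $\lambda|b-b'|$ (no factor $1/2$), and then an extra factor from combining the two groups produces the $\sqrt{2}$.
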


For a probability measure $\mu$ on a metric space $(\mathsf{X}, \mathrm{dist})$ and a measurable set $\mathsf{A} \subset \mathsf{X}$, let
\[
\mu^+(\mathsf{A}) = \liminf_{r \downarrow 0} \frac{\mu(\mathsf{A}^r) - \mu(\mathsf{A})}{r},
\]
where $\mathsf{A}^r = \{x \in \mathsf{X}: \, \mathrm{dist}(x,y) < r \text{ for some } y \in \mathsf{A}\}$.
Throughout, we set $\mathsf{X} = \mathbb{R}^{d+p}$, and let 
\[
\mathrm{dist}((\alpha^{(1)}, \beta^{(1)}), (\alpha^{(2)}, \beta^{(2)})) = \sqrt{ \|\alpha^{(1)} - \alpha^{(2)}\|_2^2 + \lambda^2 \|\beta^{(1)} - \beta^{(2)}\|_2^2 }
\]
for $\alpha^{(1)}, \alpha^{(2)} \in \mathbb{R}^d$ and $\beta^{(1)}, \beta^{(2)} \in \mathbb{R}^p$.
We say $\mu$ satifies a Cheeger isoperimetric inequality if there exists $C_{\scriptsize\mbox{iso}} > 0$ such that
\[
\mu^+(\mathsf{A}) \geq C_{\scriptsize\mbox{iso}} \min\{ \mu(\mathsf{A}), 1 - \mu(\mathsf{A})\}
\]
whenever $0 < \mu(\mathsf{A}) < 1$.
The greatest $C_{\scriptsize\mbox{iso}}$ for which the isoperimetric inequality holds is called the Cheeger isoperimetric constant of~$\mu$, and will be denoted by $\mbox{Is}(\mu(\cdot))$.
When $\mu$ is a probability density function, $\mbox{Is}(\mu(\cdot))$ denotes the isoperimetric constant of the corresponding distribution.

In Section \ref{ssec:iso}, we establish the following isoperimetric inequality for $\pi_{A, B \mid Y}(\cdot \mid y)$.

\begin{lemma}[Isoperimetric inequality] \label{lem:isoperimetric}
    Assume that \hyperref[A2]{(A2)} holds.
    Then there exists a positive universal constant $C_1$ such that 
    \[
    \mathrm{Is}(\pi_{A, B \mid Y}(\cdot \mid y)) \geq \frac{C_1 \min\{1, \theta\} }{D+1 }.
    \]
\end{lemma}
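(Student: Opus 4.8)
The plan is to exploit the fact that, in the rescaled coordinates $u = (\alpha,\lambda\beta) \in \mathbb{R}^{d+p}$, the distance $\mathrm{dist}$ is exactly the Euclidean metric, and the reference measure $\mu_{\theta,\lambda}$ becomes the product measure $\nu := \bigotimes_{j=1}^d \mathrm{N}(0,\theta^{-2}) \otimes \bigotimes_{j=1}^p \mathrm{Laplace}(1)$, where $\mathrm{Laplace}(1)$ is the standard (unit-scale) Laplace law. Since this reparametrization is the linear identification under which the metric was defined, it preserves $\mathrm{Is}(\cdot)$, so it suffices to argue in the $u$-coordinates. By \hyperref[A2]{(A2)}.(i) the likelihood is log-concave, and $\mu_{\theta,\lambda}$ is itself log-concave (a product of a Gaussian and Laplace factors), so $\pi_{A,B\mid Y}(\cdot\mid y) \propto \Lik(y\mid \cdot)\,\mu_{\theta,\lambda}(\cdot)$ is log-concave; \hyperref[A2]{(A2)}.(ii) will let me approximate $\ell$ from above by smooth convex functions, run the argument for the resulting smooth log-concave densities, and pass to the limit.

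First I would lower bound $\mathrm{Is}(\nu)$. The Poincar\'e constant tensorizes across a product, equalling the largest factor constant: the factor $\mathrm{N}(0,\theta^{-2})$ contributes $\theta^{-2}$, and each standard Laplace factor contributes a universal constant (namely $4$). Hence $C_P(\nu) \le \max\{\theta^{-2}, 4\}$. Because $\nu$ is log-concave, the Cheeger/Buser equivalence for log-concave measures (Ledoux; E.\ Milman) gives $\mathrm{Is}(\nu) \gtrsim 1/\sqrt{C_P(\nu)} \ge \min\{\theta, 1/2\} \asymp \min\{1,\theta\}$, so $\mathrm{Is}(\nu) \ge c_0\min\{1,\theta\}$ for a universal $c_0$.

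Next I would transfer the isoperimetry from $\nu$ to $\pi_{A,B\mid Y}(\cdot\mid y)$ using the one-sided density bound $\pi_{A,B\mid Y}(\cdot\mid y) \le e^{D}\,\mu_{\theta,\lambda}(\cdot)$ supplied by \hyperref[A2]{(A2)}.(iii). The mechanism is the concentration route of \cite{milman2009role}. The Cheeger constant of $\nu$ yields exponential concentration: for any $1$-Lipschitz $f$ and any $\mathsf{A}$ with $\nu(\mathsf{A})\ge 1/2$ one has $\nu(\mathrm{dist}(\cdot,\mathsf{A})\ge r) \le e^{-\kappa r}$ with $\kappa \asymp \mathrm{Is}(\nu)$. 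The density bound then forces $\pi_{A,B\mid Y}(\mathrm{dist}(\cdot,\mathsf{A})\ge r\mid y) \le e^{D}e^{-\kappa r}$, i.e.\ exponential concentration for $\pi_{A,B\mid Y}(\cdot\mid y)$ at the same rate $\kappa$ but with a prefactor $e^{O(D)}$; equivalently, a $\pi$-median of $f$ differs from a $\nu$-median by $O(D/\kappa)$. Since $\pi_{A,B\mid Y}(\cdot\mid y)$ is log-concave, Milman's equivalence between concentration and isoperimetry converts this prefactor-laden concentration back into a Cheeger inequality, with the prefactor degrading the effective rate by a factor $\asymp 1/(D+1)$: $\mathrm{Is}(\pi_{A,B\mid Y}(\cdot\mid y)) \ge \frac{c_1}{D+1}\,\mathrm{Is}(\nu)$ for a universal $c_1$.

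Combining the two displays gives $\mathrm{Is}(\pi_{A,B\mid Y}(\cdot\mid y)) \ge \frac{c_0 c_1 \min\{1,\theta\}}{D+1}$, and setting $C_1 = c_0 c_1$ finishes the proof. The main obstacle is precisely the transfer step: a crude Holley--Stroock or surface-measure comparison through the density ratio yields only an $e^{-2D}$ factor, whereas the claimed bound demands the far stronger linear dependence $1/(D+1)$. Obtaining it is exactly what the log-concavity of $\pi_{A,B\mid Y}(\cdot\mid y)$ buys us, via Milman's equivalence of concentration and isoperimetry for log-concave measures; the remaining care is to legitimize this for the non-smooth $\ell$ (e.g.\ the heteroskedastic Gaussian case) by running the estimates on the smooth convex approximants from \hyperref[A2]{(A2)}.(ii) and checking that the isoperimetric lower bound survives the limit.
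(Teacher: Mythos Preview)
Your proposal is correct and follows essentially the same two-step approach as the paper: first bound the Cheeger constant of the product reference measure $\mu_{\theta,\lambda}$ (the paper invokes Bobkov's product-of-one-dimensional-Cheeger-constants result \cite{bobkov1997isoperimetric} rather than your Poincar\'e tensorization plus Cheeger--Buser, but either route yields $\mathrm{Is}(\mu_{\theta,\lambda}) \gtrsim \min\{1,\theta\}$), then transfer to $\pi_{A,B\mid Y}(\cdot\mid y)$ via the one-sided density bound $e^D$ and Milman's concentration-to-isoperimetry equivalence for log-concave measures \cite{milman2009role,milman2012properties}, using the smooth approximation from \hyperref[A2]{(A2)}.(ii) to justify applying Milman's result. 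Your explicit rescaling $u=(\alpha,\lambda\beta)$ to make $\mathrm{dist}$ Euclidean is the same identification the paper uses implicitly.
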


Lemmas \ref{lem:close} and \ref{lem:isoperimetric} are used to form a bound on the spectral gap of $\Mtd$.
Theorem 18 of \cite{andrieu2024explicit} is a generic result that entails this procedure in a general setting.
From that theorem, we immediately have Lemma \ref{lem:andrieu} below.

\begin{lemma} \label{lem:andrieu} \cite{andrieu2024explicit}
    Suppose that there exist positive numbers $\delta$ and $\epsilon$ such that, when $\|\alpha^{(1)} - \alpha^{(2)}\|_2^2 + \lambda^2 \|\beta^{(1)} - \beta^{(2)}\|_2^2 < \delta^2$, 
    \[
    \|\Mtd((\alpha^{(1)}, \beta^{(1)}), \cdot) - \Mtd((\alpha^{(2)}, \beta^{(2)}), \cdot)\|_{\TV} \leq 1 - \epsilon.
    \]
    Then 
    \[
    \mathrm{Gap}(\Mtd) \geq \frac{1}{32} \epsilon^2 \min \left\{ 1, \frac{1}{4} \delta^2 \, \mathrm{Is}(\pi_{A,B \mid Y}(\cdot \mid y)) ^2 \right\}.
    \]
\end{lemma}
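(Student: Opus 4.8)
The plan is to lower bound the conductance of $\Mtd$ and then convert it into a spectral gap bound via the discrete Cheeger inequality, following the general template of Theorem 18 of \cite{andrieu2024explicit}. Write $\pi$ for $\pi_{A,B\mid Y}(\cdot \mid y)$ and recall the conductance
\[
\Phi(\Mtd) = \inf_{0 < \pi(\mathsf{A}) \leq 1/2} \frac{1}{\pi(\mathsf{A})} \int_{\mathsf{A}} \Mtd((\alpha,\beta), \mathsf{A}^c) \, \pi(\alpha, \beta) \, \df(\alpha, \beta).
\]
Since $\Mtd$ is reversible with respect to $\pi$, the Lawler--Sokal lower bound gives $\mathrm{Gap}(\Mtd) \geq \Phi(\Mtd)^2/2$. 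Hence it suffices to establish
\[
\Phi(\Mtd) \geq \frac{\epsilon}{4} \min\left\{ 1, \; \tfrac{1}{2} \delta \, \mathrm{Is}(\pi) \right\},
\]
because squaring and halving this bound produces exactly the constants $\tfrac{1}{32}$ and $\tfrac14$ in the statement, using $(\min\{1,a\})^2 = \min\{1,a^2\}$.

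To bound $\Phi(\Mtd)$, I would fix a measurable set $\mathsf{A}$ with $0 < \pi(\mathsf{A}) \leq 1/2$ and partition $\mathsf{X} = \mathbb{R}^{d+p}$ into the two interiors
\[
\mathsf{A}_- = \{ x \in \mathsf{A} : \mathrm{dist}(x, \mathsf{A}^c) \geq \delta/2 \}, \qquad \mathsf{A}^c_- = \{ x \in \mathsf{A}^c : \mathrm{dist}(x, \mathsf{A}) \geq \delta/2 \},
\]
together with the separating shell $\mathsf{S} = \mathsf{X} \setminus (\mathsf{A}_- \cup \mathsf{A}^c_-)$. The close coupling hypothesis is the engine: whenever $x \in \mathsf{A}$, $y \in \mathsf{A}^c$, and $\mathrm{dist}(x,y) < \delta$, the overlap of the two kernels forces $\Mtd(x, \mathsf{A}^c) + \Mtd(y, \mathsf{A}) \geq \epsilon$, since $\|\Mtd(x,\cdot) - \Mtd(y,\cdot)\|_{\TV} \leq 1-\epsilon$ measures exactly the disagreement across the partition $(\mathsf{A}, \mathsf{A}^c)$. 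Consequently, points lying in the shell $\mathsf{S}$ are forced to transmit flow of order $\epsilon$ across the boundary.

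The argument then splits into two regimes. If $\pi(\mathsf{S})$ is already comparable to $\pi(\mathsf{A})$, the flow generated by the shell alone is of order $\epsilon \, \pi(\mathsf{A})$, giving $\Phi(\Mtd) \gtrsim \epsilon$ and matching the ``$1$'' branch of the minimum. Otherwise $\mathsf{A}_-$ and $\mathsf{A}^c_-$ carry most of the mass of $\mathsf{A}$ and $\mathsf{A}^c$ respectively; since these sets are separated by distance $\delta/2$, the Cheeger isoperimetric inequality for $\pi$ lower bounds the measure of the intervening shell, $\pi(\mathsf{S}) \gtrsim \tfrac{\delta}{2}\,\mathrm{Is}(\pi)\,\min\{\pi(\mathsf{A}),\pi(\mathsf{A}^c)\}$, yielding the ``$\tfrac12 \delta\,\mathrm{Is}(\pi)$'' branch. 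Retaining the worse of the two cases produces the displayed conductance bound, and Cheeger closes the argument.

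The hard part will be the bookkeeping in the second regime: one must upgrade the isoperimetric inequality, which is stated for infinitesimal enlargements through $\mu^+$, into a bound on the $\pi$-measure of a shell of \emph{finite} width $\delta/2$ separating $\mathsf{A}_-$ from $\mathsf{A}^c_-$, all while tracking constants so the final estimate lands exactly on $\tfrac{1}{32}\epsilon^2 \min\{1, \tfrac14 \delta^2 \mathrm{Is}(\pi)^2\}$. Because the three inputs here --- reversibility of $\Mtd$ with respect to $\pi$, a close coupling at distance $\delta$ with defect $\epsilon$, and a Cheeger constant equal to $\mathrm{Is}(\pi)$ --- match the hypotheses of Theorem 18 of \cite{andrieu2024explicit} verbatim, the cleanest route is to verify these three inputs in our notation and invoke that theorem directly, which supplies the stated constants without reproducing the finite-width shell estimate.
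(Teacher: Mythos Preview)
Your proposal is correct and matches the paper's approach: the paper does not prove this lemma but simply states that it is an immediate consequence of Theorem 18 of \cite{andrieu2024explicit}, which is exactly where you land after your helpful sketch of the underlying conductance argument.
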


We can now prove Theorem \ref{thm:main}.
\begin{proof} [Proof of Theorem \ref{thm:main}]
    In light of Lemmas \ref{lem:roberts}, \ref{lem:close}, \ref{lem:isoperimetric}, and \ref{lem:andrieu}, it suffices to verify $\Mtd$ is positive semi-definite.
    But it is well-known that the transition densities of data augmentation algorithms are positive semi-definite; see, e.g., \cite{khare2011spectral}.
    Hence, the result follows.
\end{proof}

\subsection{Close coupling} \label{ssec:close}

The goal of this subsection is to establish Lemma \ref{lem:close}.

From Algorithm \ref{alg:da}, we see
\[
\Mtd((\alpha, \beta), (\alpha', \beta')) = \int_{\Z \times (0,\infty)^p} \pi_{A, B \mid Z, T, Y}(\alpha', \beta' \mid z, 1/\xi, y) \, g(z \mid \alpha, \beta, y) \, \tilde{h}(\xi; \beta, \lambda) \, \df(z, \xi),
\]
where $1/\xi = (1/\xi_1, \dots, 1/\xi_p)$,
\[
\tilde{h}(\xi; \beta, \lambda) = \prod_{j=1}^p h(\xi_j ; \beta_j, \lambda), \quad \xi \in (0,\infty)^p,
\]
with
\[
h(u ; \beta_j, \lambda) = \frac{\lambda}{\sqrt{2 \pi}} e^{\lambda |\beta_j|} \, u^{-3/2} \exp \left( - \frac{\beta_j^2 u}{2} - \frac{\lambda^2}{2u} \right) , \quad u \in (0,\infty),
\]
being the density function of the $\mbox{InvGaussian}(\lambda/|\beta_j|, \lambda^2)$ distribution.

The following lemma will be useful for establishing a close coupling condition.

\begin{lemma} \label{lem:close-1}
    Let $\alpha^{(1)}$ and $\alpha^{(2)}$ be elements of $\mathbb{R}^d$, and let $\beta^{(1)}$ and $\beta^{(2)}$ be elements of $\mathbb{R}^p$.
    Then
    \[
    \begin{aligned}
        &\|\Mtd((\alpha^{(1)}, \beta^{(1)}), \cdot) - \Mtd((\alpha^{(2)}, \beta^{(2)}), \cdot)\|_{\TV} \\ \leq & \| \tilde{h}(\cdot; \beta^{(1)}, \lambda) - \tilde{h}(\cdot; \beta^{(2)}, \lambda) \|_{\TV} + \left\| g(\cdot \mid \alpha^{(1)}, \beta^{(1)}, y) - g(\cdot \mid \alpha^{(2)}, \beta^{(2)}, y) \right\|_{\TV} .
    \end{aligned}
    \]
\end{lemma}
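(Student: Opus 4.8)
The plan is to recognize $\Mtd((\alpha,\beta),\cdot)$ as a single fixed Markov kernel applied to an auxiliary ``latent'' distribution that depends on the current state $(\alpha,\beta)$, and then to control the discrepancy between the two latent distributions. Concretely, for a current state $(\alpha,\beta)$, I would write the joint density of the auxiliary pair $(z,\xi)$ as
\[
\nu_{(\alpha,\beta)}(z,\xi) = g(z \mid \alpha, \beta, y) \, \tilde{h}(\xi; \beta, \lambda), \quad (z,\xi) \in \Z \times (0,\infty)^p,
\]
which is a \emph{product} measure in $(z,\xi)$ because, given $(\alpha,\beta)$, the quantities $z$ and $\xi$ are drawn independently. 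Letting $K$ denote the Markov kernel $(z,\xi) \mapsto \pi_{A,B \mid Z,T,Y}(\cdot \mid z, 1/\xi, y)$, the displayed integral representation of $\Mtd$ says precisely that $\Mtd((\alpha,\beta),\cdot) = \nu_{(\alpha,\beta)} K$. Thus the two kernels $\Mtd((\alpha^{(1)},\beta^{(1)}),\cdot)$ and $\Mtd((\alpha^{(2)},\beta^{(2)}),\cdot)$ are images of $\nu_{(\alpha^{(1)},\beta^{(1)})}$ and $\nu_{(\alpha^{(2)},\beta^{(2)})}$ under one and the same kernel $K$.

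The first key step is the contraction (data-processing) inequality for total variation: for any Markov kernel $K$ and probability measures $\nu_1,\nu_2$, one has $\|\nu_1 K - \nu_2 K\|_{\TV} \leq \|\nu_1 - \nu_2\|_{\TV}$. This follows from the variational characterization of total variation, or directly by writing each $\nu_i K$ as an integral of $K$ against $\nu_i$, pulling the absolute value inside via the triangle inequality, and using that $K$ integrates to one. Applying it with $\nu_i = \nu_{(\alpha^{(i)},\beta^{(i)})}$ reduces the problem to bounding $\|\nu_{(\alpha^{(1)},\beta^{(1)})} - \nu_{(\alpha^{(2)},\beta^{(2)})}\|_{\TV}$.

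The second key step is the subadditivity of total variation over product measures: if $\nu_i = P_i \otimes Q_i$, then $\|\nu_1 - \nu_2\|_{\TV} \leq \|P_1 - P_2\|_{\TV} + \|Q_1 - Q_2\|_{\TV}$, which I would obtain from a single triangle inequality $\|P_1 \otimes Q_1 - P_2 \otimes Q_2\|_{\TV} \leq \|P_1 \otimes Q_1 - P_2 \otimes Q_1\|_{\TV} + \|P_2 \otimes Q_1 - P_2 \otimes Q_2\|_{\TV}$ together with the fact that tensoring with a common probability factor leaves total variation unchanged. Taking $P_i = g(\cdot \mid \alpha^{(i)},\beta^{(i)},y)$ and $Q_i = \tilde{h}(\cdot; \beta^{(i)},\lambda)$ and combining with the contraction bound yields exactly the claimed inequality. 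The argument is essentially routine; the only point demanding care is the bookkeeping in the two standard facts above, in particular verifying that $K$ is genuinely a fixed kernel not depending on the current state $(\alpha,\beta)$ (so that the contraction step applies), which is transparent from the displayed integral representation.
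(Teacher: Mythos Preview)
Your proposal is correct and essentially identical to the paper's proof: the paper writes out the data-processing step explicitly by pulling the absolute value inside the integral and integrating out the probability kernel $\pi_{A,B\mid Z,T,Y}$, then invokes its Lemma~\ref{lem:tv-tensor} for the product-measure subadditivity of total variation. The only difference is presentational---you name the two steps abstractly (contraction under a Markov kernel, tensorization of TV), whereas the paper unpacks the first step as an explicit integral computation.
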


\begin{proof}
    It holds that
    \[
    \begin{aligned}
        & 2 \, \|\Mtd((\alpha^{(1)}, \beta^{(1)}), \cdot) - \Mtd((\alpha^{(2)}, \beta^{(2)}), \cdot)\|_{\TV} \\
        =& \int_{\mathbb{R}^{d+p} } \left| \int_{\Z \times (0,\infty)^p} \pi_{A, B \mid Z, T, Y}(\alpha', \beta' \mid z, 1/\xi, y) \left[ \, g(z \mid \alpha^{(1)}, \beta^{(1)}, y) \, \tilde{h}(\xi; \beta^{(1)}, \lambda) - \right. \right. \\
        & \qquad \left. \left. g(z \mid \alpha^{(2)}, \beta^{(2)}, y) \, \tilde{h}(\xi; \beta^{(2)}, \lambda) \right] \, \df (z, \xi) \, \right| \, \df (\alpha', \beta') \\
        \leq& \int_{\mathbb{R}^{d+p} \times \Z \times (0,\infty)^p} \pi_{A, B \mid Z, T, Y}(\alpha', \beta' \mid z, 1/\xi, y) \left| \, g(z \mid \alpha^{(1)}, \beta^{(1)}, y) \, \tilde{h}(\xi; \beta^{(1)}, \lambda) - \right. \\
        & \qquad \left. g(z \mid \alpha^{(2)}, \beta^{(2)}, y) \, \tilde{h}(\xi; \beta^{(2)}, \lambda) \right| \, \df (\alpha', \beta', z, \xi) \\
        =& \int_{\Z \times (0,\infty)^p} \left| \, g(z \mid \alpha^{(1)}, \beta^{(1)}, y) \, \tilde{h}(\xi; \beta^{(1)}, \lambda) -  g(z \mid \alpha^{(2)}, \beta^{(2)}, y) \, \tilde{h}(\xi; \beta^{(2)}, \lambda) \right| \, \df(z, \xi) .
    \end{aligned}
    \]
    The desired result then follows from Lemma \ref{lem:tv-tensor} in Appendix \ref{app:technical}, which is a well-known result on the approximate tensorization of total variation distances.
\end{proof}

We can now prove Lemma \ref{lem:close}, which states that, under \hyperref[A1]{(A1)}, when $\|\alpha^{(1)} - \alpha^{(2)}\|_2^2 + \lambda^2 \|\beta^{(1)} - \beta^{(2)}\|_2^2 < \delta^2$, 
    \[
    \|\Mtd((\alpha^{(1)}, \beta^{(1)}), \cdot) - \Mtd((\alpha^{(2)}, \beta^{(2)}), \cdot)\|_{\TV} \leq 1 - \epsilon.
    \]

\begin{proof}[Proof of Lemma \ref{lem:close}]
    Assume that $\|\alpha^{(1)} - \alpha^{(2)}\|_2^2 + \lambda^2 \|\beta^{(1)} - \beta^{(2)}\|_2^2 < \delta^2$.
    Then $\|\beta^{(1)} - \beta^{(2)}\|_2^{1/2} < \sqrt{\delta/\lambda}$.
    By Lemma \ref{lem:tv-invgauss} in Appendix \ref{app:technical}, which is a technical lemma regarding the total variation distance between products of inverse Gaussian distributions,
    \[
    \| \tilde{h}(\cdot; \beta^{(1)}, \lambda) - \tilde{h}(\cdot; \beta^{(2)}, \lambda) \|_{\TV} \leq \sqrt{2} p^{1/4} \delta^{1/2}.
    \]
    By Lemma \ref{lem:close-1},
    \[
    \begin{aligned}
        &\|\Mtd((\alpha^{(1)}, \beta^{(1)}), \cdot) - \Mtd((\alpha^{(2)}, \beta^{(2)}), \cdot)\|_{\TV} \\
        \leq& \sqrt{2} p^{1/4} \delta^{1/2} + \left\| g(\cdot \mid \alpha^{(1)}, \beta^{(1)}, y) - g(\cdot \mid \alpha^{(2)}, \beta^{(2)}, y) \right\|_{\TV}.
    \end{aligned}
    \]
    The desired result then follows from \hyperref[A1]{(A1)}.
\end{proof}

\subsection{Isoperimetric inequality} \label{ssec:iso}

Taking inspiration from \cite{lee2024fast}, we establish Lemma~\ref{lem:isoperimetric} by comparing to a reference distribution that has good isoperimetric properties.
    
For $\theta > 0$ and $\lambda > 0$, define the probability density function
    \[
    \mu_{\theta, \lambda}(\alpha, \beta) = \frac{\theta^d}{(2\pi)^{d/2}} \exp \left( -\frac{\theta^2 \|\alpha\|_2^2}{2} \right) \, \frac{\lambda^p}{2^p} \, \exp \left( - \lambda \|\beta\|_1 \right), \quad \alpha \in \mathbb{R}^d, \; \beta \in \mathbb{R}^p.
    \]

Because $\mu_{\theta, \lambda}(\alpha, \beta)$ is a product of univariate densities, one can obtain a dimension-independent isoperimetric constant for it.

\begin{lemma} \cite{bobkov1997isoperimetric}
    It holds that
    \[
    \mathrm{Is}(\mu_{\theta, \lambda}(\cdot)) \geq \frac{1}{2 \sqrt{6}} \min\left\{ 1, \sqrt{\frac{2}{\pi}} \theta \right\} .
    \]
\end{lemma}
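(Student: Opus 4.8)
The plan is to recognize $\mu_{\theta, \lambda}(\cdot)$ as a product of one-dimensional log-concave measures, to reduce the weighted metric $\mathrm{dist}$ to the ordinary Euclidean one by an isometry, and then to invoke the tensorization bound for Cheeger constants of product probability measures from \cite{bobkov1997isoperimetric}.

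First I would pass to the coordinates $\gamma_j = \lambda \beta_j$, $j = 1, \dots, p$. The map $(\alpha, \beta) \mapsto (\alpha, \lambda \beta)$ is an isometry from $\mathbb{R}^{d+p}$ equipped with $\mathrm{dist}$ onto $\mathbb{R}^{d+p}$ equipped with the standard Euclidean distance, since $\lambda^2 \|\beta^{(1)} - \beta^{(2)}\|_2^2 = \|\lambda\beta^{(1)} - \lambda\beta^{(2)}\|_2^2$. Under this map the pushforward of $\mu_{\theta, \lambda}(\cdot)$ becomes a product of $d$ copies of $\mathrm{N}(0, \theta^{-2})$ on the $\alpha$-coordinates and $p$ copies of the standard Laplace distribution, with density $u \mapsto \tfrac{1}{2} e^{-|u|}$, on the $\gamma$-coordinates. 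Because the Cheeger isoperimetric constant is invariant under isometries, it suffices to bound the Cheeger constant of this product measure with respect to the Euclidean metric.

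Next I would compute the one-dimensional Cheeger constants of the two factors by means of the standard characterization $\mathrm{Is}(\nu) = \inf_t f(t) / \min\{F(t), 1 - F(t)\}$, valid for an absolutely continuous probability measure $\nu$ on $\mathbb{R}$ with density $f$ and distribution function $F$. For $\mathrm{N}(0, \theta^{-2})$, symmetry together with the monotonicity of the Gaussian hazard rate $f/(1-F)$ on $[0,\infty)$ places the infimum at the median $t = 0$, giving $\mathrm{Is}(\mathrm{N}(0,\theta^{-2})) = 2 f(0) = \theta \sqrt{2/\pi}$. For the standard Laplace distribution a direct computation shows $f(t)/\min\{F(t), 1 - F(t)\} = 1$ for every $t$, so its Cheeger constant equals $1$.

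Finally I would apply the product theorem of \cite{bobkov1997isoperimetric}, which asserts that the Cheeger constant of a product of one-dimensional probability measures, taken with respect to the Euclidean distance, is at least $\tfrac{1}{2\sqrt{6}}$ times the minimum of the Cheeger constants of its factors. Combined with the two one-dimensional computations this yields $\mathrm{Is}(\mu_{\theta, \lambda}(\cdot)) \geq \tfrac{1}{2\sqrt{6}} \min\{1, \theta\sqrt{2/\pi}\}$, which is the asserted bound. The one-dimensional calculations are routine; the only points requiring care are verifying that the $\lambda$-weighting in $\mathrm{dist}$ is absorbed exactly by the change of variables and invoking the tensorization bound with its correct universal constant $\tfrac{1}{2\sqrt{6}}$, so that the dimension-independence of the estimate is genuinely inherited from the product structure.
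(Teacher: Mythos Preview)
Your proposal is correct and follows essentially the same approach as the paper: compute the one-dimensional Cheeger constants of the Gaussian and Laplace factors, absorb the $\lambda$-weighting in $\mathrm{dist}$ by rescaling, and apply Bobkov's tensorization bound (Remark~5.2 of \cite{bobkov1997isoperimetric}) with constant $\tfrac{1}{2\sqrt{6}}$. The paper's version is terser, citing Theorem~1.3 of \cite{bobkov1997isoperimetric} for the one-dimensional constants rather than computing them, but the structure is identical.
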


\begin{proof}
    The Cheeger isoperimetric constants (with respect to the Euclidean distance) of the one-dimensional standard Laplace and normal distributions are, respectively, 1 and $\sqrt{2/\pi}$; see, e.g., Theorem 1.3 of \cite{bobkov1997isoperimetric}.
    Rescaling and applying Remark 5.2 of \cite{bobkov1997isoperimetric} yields the desired result.
\end{proof}

One can then use a comparison result from \cite{milman2012properties} to establish an isoperimetric inequality for $\pi_{A,B \mid Y}(\cdot \mid y)$.

\begin{lemma} \cite{milman2012properties}
    Suppose that \hyperref[A2]{(A2)} holds.
    Then there exists a universal costant $C_1' \in (0,\infty)$ such that
    \[
    \mathrm{Is}(\pi_{A, B \mid Y}(\cdot \mid y)) \geq \frac{C_1'}{D+1}  \mathrm{Is}(\mu_{\theta, \lambda}(\cdot)).
    \]
\end{lemma}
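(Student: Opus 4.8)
The plan is to obtain this comparison as a direct application of the perturbation principle for log-concave measures developed in \cite{milman2009role,milman2012properties}. Two structural facts must be put in place first. On one hand, $\pi_{A, B \mid Y}(\cdot \mid y)$ is log-concave: by \hyperref[A2]{(A2)}.(i) the function $\ell$ is convex, and the remaining exponent $\theta^2 \|\alpha\|_2^2 / 2 + \lambda \|\beta\|_1$ is convex, so the negative log-density of $\pi_{A,B \mid Y}$ is convex. On the other hand, $\mu_{\theta, \lambda}$ is log-concave, being a product of Gaussian and Laplace densities. The decisive observation is that the exponent $\theta^2 \|\alpha\|_2^2 / 2 + \lambda \|\beta\|_1$ is shared by $\pi_{A, B \mid Y}$ and $\mu_{\theta, \lambda}$, so that, up to normalization, $\pi_{A, B \mid Y}(\cdot \mid y) / \mu_{\theta, \lambda}(\cdot) \propto \exp(-\ell)$; in particular the non-smooth term $\lambda \|\beta\|_1$ cancels in the ratio, and \hyperref[A2]{(A2)}.(iii) supplies the one-sided bound $\pi_{A, B \mid Y} / \mu_{\theta, \lambda} \leq e^D$.

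The transfer of isoperimetry then proceeds through concentration. Since $\mu_{\theta, \lambda}$ satisfies a Cheeger inequality with constant $\mathrm{Is}(\mu_{\theta, \lambda})$, it enjoys exponential concentration at the corresponding rate; the one-sided density bound $e^D$ propagates this concentration to $\pi_{A, B \mid Y}$ with a degradation controlled by $D$. Because $\pi_{A, B \mid Y}$ is log-concave, Milman's theorem---that for log-concave measures concentration, Cheeger, and spectral-gap bounds are all equivalent up to universal factors, with no dependence on dimension---upgrades this concentration back into a Cheeger isoperimetric inequality for $\pi_{A, B \mid Y}$. The essential point, and the reason the loss is the polynomial factor $(D+1)^{-1}$ rather than the naive exponential factor $e^{-2D}$ that a crude two-sided density comparison would produce, is precisely this convexity-driven equivalence: log-concavity converts even weak concentration into strong isoperimetry, so that a one-sided density bound suffices and costs only a factor linear in $D$.

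Because the quantitative estimates in \cite{milman2009role} are obtained through smooth (semigroup- and PDE-based) arguments, the remaining work is to accommodate the non-smoothness of $\ell$. This is where \hyperref[A2]{(A2)}.(ii) enters: I would take a sequence of twice-differentiable convex functions $\ell_m$ approximating $\ell$ pointwise from above, form the log-concave approximants $\pi_m \propto \exp(-\ell_m) \, \mu_{\theta, \lambda}$, apply the comparison to each $\pi_m$, and then pass to the limit. Here the density ratio $\pi_m / \mu_{\theta, \lambda} \propto \exp(-\ell_m)$ is controlled uniformly: writing $Z_m = \int \exp(-\ell_m) \, \df \mu_{\theta, \lambda}$, the bound $\exp(-\ell_m) \leq \exp(-\ell)$ makes $\exp(-\ell) \, \mu_{\theta, \lambda}$ an integrable dominating function, so dominated convergence gives $Z_m \to \int \exp(-\ell) \, \df \mu_{\theta, \lambda}$, whence $\pi_m / \mu_{\theta, \lambda} \leq 2 \, (\pi_{A, B \mid Y} / \mu_{\theta, \lambda}) \leq 2 e^D$ for all large $m$, i.e. the relevant exponent is at most $D + \log 2 \leq D + 1$.

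I expect the main obstacle to lie in the limiting step rather than in the two structural facts. One must verify both that Milman's comparison applies to each smooth log-concave $\pi_m$ with a bound of the promised form $\mathrm{Is}(\pi_m) \geq C_1' (D+1)^{-1} \mathrm{Is}(\mu_{\theta, \lambda})$, and that the Cheeger constant is lower semicontinuous along $\pi_m \to \pi_{A, B \mid Y}$, so that the inequality survives passage to the limit; controlling the isoperimetric constant under this convergence, despite the non-smoothness of the limiting density, is the delicate part. Once this is secured, combining the resulting bound with the preceding Bobkov-type lower bound on $\mathrm{Is}(\mu_{\theta, \lambda})$ yields Lemma \ref{lem:isoperimetric} with $C_1$ a universal constant.
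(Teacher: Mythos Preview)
Your overall architecture matches the paper's: both measures are log-concave, the one-sided bound \hyperref[A2]{(A2)}.(iii) transfers concentration from $\mu_{\theta,\lambda}$ to $\pi_{A,B\mid Y}$, and Milman's convexity-driven equivalence upgrades this back to a Cheeger inequality with loss $(D+1)^{-1}$. Your dominated-convergence control of the normalizing constants is also correct. However, there is a concrete error in the smoothing step. You form $\pi_m \propto \exp(-\ell_m)\,\mu_{\theta,\lambda}$ and later refer to $\pi_m$ as ``smooth log-concave,'' but $\mu_{\theta,\lambda}$ itself carries the non-smooth term $\lambda\|\beta\|_1$, so $\log\pi_m$ is still not twice differentiable. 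Your observation that $\lambda\|\beta\|_1$ ``cancels in the ratio'' is true of $\pi_m/\mu_{\theta,\lambda}$, but Milman's concentration-to-isoperimetry implication requires (approximation by) smoothness of the \emph{target} measure, not of the Radon--Nikodym derivative; a smooth ratio does not make the semigroup argument applicable to $\pi_m$. As written, your approximation is therefore either insufficient (if you intend to reduce to Milman's smooth-case theorem) or unnecessary (if you invoke the version that already absorbs the approximation).

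The paper handles this by smoothing \emph{both} sources of non-differentiability in $\pi_{A,B\mid Y}$: it uses \hyperref[A2]{(A2)}.(ii) for $\ell$ and Lemma~\ref{lem:approx} for $\lambda\|\beta\|_1$, producing approximants $\hat\pi_m$ with genuinely $C^2$ concave log-density, and then shows $\hat\pi_m \to \pi_{A,B\mid Y}$ in total variation by dominated convergence. At that point it invokes Remark~3.5 of \cite{milman2012properties} (equivalently Lemma~5.4 of \cite{milman2009role}) directly, which already packages the limiting step you anticipate. The repair to your argument is immediate: replace $\pi_m \propto \exp(-\ell_m)\,\mu_{\theta,\lambda}$ by $\hat\pi_m \propto \exp[-\ell_m - \theta^2\|\alpha\|_2^2/2 - \psi_m(\beta)]$ with $\psi_m$ a smooth convex majorant of $\lambda\|\beta\|_1$, and the rest of what you wrote goes through.
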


\begin{proof}
    In light of Remark 3.5 of \cite{milman2012properties} (or Lemma 5.4 of \cite{milman2009role}), it suffices to show that $\pi_{A, B \mid Y}(\cdot \mid y)$ can be approximated in total variation by a sequence of probability density functions whose logarithms are concave and twice-differentiable.

    Now,
    \[
    \pi_{A, B \mid Y}(\alpha, \beta \mid y) = \frac{\exp[ - \ell(\alpha, \beta) - \theta^2 \|\alpha\|_2^2/2 -\lambda\|\beta\|_1] }{ \int_{\mathbb{R}^{d+p}} \exp[-\ell(\alpha', \beta') - \theta^2 \|\alpha'\|_2^2/2 - \lambda\|\beta'\|_1] \, \df (\alpha', \beta')  }.
    \]
    Denote the numerator by $g_*(\alpha, \beta)$.
    By Lemma \ref{lem:approx}, the function $\lambda \|\beta\|_1$ can be approximately point-wisely from above by a sequence of convex, twice differentiable functions.
    Then, by assumption, $g_*$ can be approximated point-wisely from below by a sequence of non-negative functions $(g_m)_{m=1}^{\infty}$, where $\log g_m(\cdot)$ is concave and twice differentiable.
    
    For each $m$, let $\hat{\pi}_m(\alpha, \beta) \propto g_m(\alpha, \beta)$ be a probability density function.
    By the dominated convergence theorem,
    \[
    c(g_m) := \int_{\mathbb{R}^{d + p}} g_m(\alpha, \beta) \, \df (\alpha, \beta) \to \int_{\mathbb{R}^{d + p}} g_*(\alpha, \beta) \, \df (\alpha, \beta)
    \]
    as $m \to \infty$.
    Thus, for $m$ large enough, $\hat{\pi}_m(\cdot) = g_m(\cdot)/c(g_m)$ is bounded above by $2 \, \pi_{A,B \mid Y}(\cdot \mid y)$.
    Then, by the dominated convergence theorem, as $m \to \infty$,
    \[
    \| \hat{\pi}_m(\cdot) - \pi_{A, B \mid Y}(\cdot \mid y)\|_{\TV} = \frac{1}{2} \int_{\mathbb{R}^{d + p}} \left| \frac{g_m(\alpha, \beta)}{c(g_m)} - \frac{g_*(\alpha,\beta)}{ \int_{\mathbb{R}^{d+p}} g_*(\alpha', \beta') \, \df (\alpha', \beta') } \right| \, \df (\alpha, \beta) \to 0.
    \]
    That is, $\pi_{A, B \mid Y}(\cdot \mid y)$ can be approximated in total variation by $\hat{\pi}_m(\cdot)$.
\end{proof}

Combining the two lemmas above yields Lemma \ref{lem:isoperimetric}, which states that, under \hyperref[A2]{(A2)}, there is a positive universal constant $C_1$ such that
\[
\mathrm{Is}(\pi_{A, B \mid Y}(\cdot \mid y)) \geq \frac{C_1 \min\{1, \theta\} }{D+1 }.
\]

\appendix

\section{Technical Lemmas} \label{app:technical}

We first present an elementary result involving smooth approximations of the $L^1$ norm.

\begin{lemma} \label{lem:approx}
    Let $\mathrm{L}: \mathbb{R}^q \to \mathbb{R}^{q'}$ be a linear function.
    Then the function $x \mapsto \|\mathrm{L}(x)\|_1$ can be approximated point-wisely from above by a sequence of convex functions that are twice differentiable.
\end{lemma}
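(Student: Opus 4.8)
The plan is to approximate each absolute value $|L_k(x)|$ (where $L_k$ denotes the $k$-th coordinate of $\mathrm{L}$) by a smooth convex surrogate and sum these up. The natural candidate is a family of functions that converge to $|t|$ from above. The key requirements are threefold: each surrogate must be (i) twice differentiable, (ii) convex, and (iii) an upper bound on $|t|$ that converges pointwise to $|t|$ as a parameter is sent to its limit.

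\medskip

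First, I would construct a one-dimensional smoothing of $t \mapsto |t|$. A convenient choice is $s_\epsilon(t) = \sqrt{t^2 + \epsilon^2}$ for $\epsilon > 0$. This is manifestly smooth (infinitely differentiable), and its second derivative $s_\epsilon''(t) = \epsilon^2 (t^2 + \epsilon^2)^{-3/2} > 0$ shows it is convex. Moreover, $|t| \le \sqrt{t^2 + \epsilon^2} \le |t| + \epsilon$, so $s_\epsilon(t)$ lies above $|t|$ and converges to $|t|$ pointwise (indeed uniformly) as $\epsilon \downarrow 0$. Setting $\epsilon = 1/m$ yields a sequence indexed by $m \in \mathbb{N}$.

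\medskip

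Next, I would assemble the approximant for $\|\mathrm{L}(x)\|_1 = \sum_{k=1}^{q'} |L_k(x)|$ by defining
\[
F_m(x) = \sum_{k=1}^{q'} \sqrt{L_k(x)^2 + 1/m^2}.
\]
Each summand is the composition of the smooth convex function $s_{1/m}$ with the linear map $x \mapsto L_k(x)$. Composition of a convex function with a linear map preserves convexity, and composition of smooth functions is smooth, so each summand is twice differentiable and convex; a finite sum of such functions inherits both properties. Summing the one-dimensional bounds gives
\[
\|\mathrm{L}(x)\|_1 \le F_m(x) \le \|\mathrm{L}(x)\|_1 + q'/m,
\]
so $F_m(x) \ge \|\mathrm{L}(x)\|_1$ for every $x$, and $F_m(x) \to \|\mathrm{L}(x)\|_1$ pointwise as $m \to \infty$. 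This establishes all the required properties.

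\medskip

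I do not anticipate a genuine obstacle here; the statement is elementary, and the only mild care needed is to confirm that convexity and smoothness survive the two operations used—composition with a linear map and finite summation—which are both standard. The one subtlety worth stating explicitly is that $s_\epsilon$ is convex \emph{as a function on $\mathbb{R}$} even where $|t|$ has its kink, so the smoothing genuinely removes the non-differentiability at the origin without destroying convexity; this is exactly why $\sqrt{t^2 + \epsilon^2}$ is preferable to, say, a piecewise construction that would require matching derivatives at gluing points.
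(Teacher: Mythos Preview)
Your proof is correct and follows the same overall strategy as the paper: smooth the one-dimensional map $t \mapsto |t|$ from above by a convex $C^2$ surrogate, compose with each linear coordinate $L_k$, and sum. The only difference is the choice of surrogate. You use the hyperbolic smoothing $s_\epsilon(t)=\sqrt{t^2+\epsilon^2}$, which is globally $C^\infty$ and requires no gluing; the paper instead uses a piecewise function $\psi_m$ equal to $|u|$ for $|u|>1/m$ and to the quartic $-\tfrac{m^3}{8}u^4+\tfrac{3m}{4}u^2+\tfrac{3}{8m}$ for $|u|\le 1/m$, with the coefficients chosen so that values and first two derivatives match at $\pm 1/m$. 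Your construction is slightly cleaner since no derivative-matching verification is needed, while the paper's has the minor advantage that $\psi_m$ coincides exactly with $|u|$ outside the shrinking window $[-1/m,1/m]$. Either choice suffices for the lemma.
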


\begin{proof}
    For $x \in \mathbb{R}^q$, denote the $j$th component of $\mathrm{L}(x)$ by $\mathrm{L}(x)_j$, $j = 1, \dots, q'$.
    For a positive integer~$m$, let $g_m(x) = \sum_{j=1}^{q'} \psi_m(\mathrm{L}(x)_j)$, where $\psi_m(u) = |u|$ if $|u| > 1/m$, and 
    \[
    \psi_m(u) = - \frac{m^3}{8} u^4 + \frac{3m}{4} u^2 + \frac{3}{8m}
    \]
    if $|u| \leq 1/m$.
    Note that $\psi_m(u)$ is convex and twice differentiable, and $\psi_m(u) \downarrow |u|$ point-wisely as $m \to \infty$.
    Then $g_m(x)$ is convex and twice differentiable, and $g_m(x) \downarrow \|\mathrm{L}(x)\|_1$  point-wisely as $m \to \infty$.
\end{proof}

The following well-known lemma concerns the approximate tensorization of the total variation distance.

\begin{lemma} \label{lem:tv-tensor}
    Let $(\Omega_1, \mathcal{F}_1, \nu_1)$ and $(\Omega_2, \mathcal{F}_2, \nu_2)$ be two $\sigma$-finite measure spaces.
    For $i \in \{1,2\}$, let $g_1^{(i)}: \Omega_1 \to [0,\infty)$ and $g_2^{(i)}: \Omega_2 \to [0,\infty)$ be probability density functions, and let $\tilde{g}^{(i)}(u,v) = g_1^{(i)}(u) \, g_2^{(i)}(v)$ for $(u,v) \in \Omega_1 \times \Omega_2$.
    Then
    \[
    \|\tilde{g}^{(1)}(\cdot) - \tilde{g}^{(2)}(\cdot) \|_{\TV} \leq \|g_1^{(1)}(\cdot) - g_1^{(2)}(\cdot)\|_{\TV} + \|g_2^{(1)}(\cdot) - g_2^{(2)}(\cdot)\|_{\TV}.
    \]
\end{lemma}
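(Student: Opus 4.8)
The plan is to reduce the product bound to two applications of the fact that $g_1^{(1)}$ and $g_2^{(2)}$ each integrate to one, via a single telescoping decomposition of the difference of products. First I would write, for $(u,v) \in \Omega_1 \times \Omega_2$,
\[
\tilde{g}^{(1)}(u,v) - \tilde{g}^{(2)}(u,v) = g_1^{(1)}(u)\bigl[g_2^{(1)}(v) - g_2^{(2)}(v)\bigr] + g_2^{(2)}(v)\bigl[g_1^{(1)}(u) - g_1^{(2)}(u)\bigr],
\]
which follows by adding and subtracting the intermediate product $g_1^{(1)}(u)\,g_2^{(2)}(v)$.

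Next I would apply the triangle inequality for the absolute value pointwise in $(u,v)$, and then integrate against the product measure. Since every density factor is nonnegative, Tonelli's theorem permits integrating in whichever order is convenient. In the term arising from the first summand I integrate out $u$ first, using $\int_{\Omega_1} g_1^{(1)}(u)\,\nu_1(\df u) = 1$; in the term arising from the second summand I integrate out $v$ first, using $\int_{\Omega_2} g_2^{(2)}(v)\,\nu_2(\df v) = 1$. This yields
\[
2\,\|\tilde{g}^{(1)}(\cdot) - \tilde{g}^{(2)}(\cdot)\|_{\TV} \leq \int_{\Omega_2} |g_2^{(1)}(v) - g_2^{(2)}(v)|\,\nu_2(\df v) + \int_{\Omega_1} |g_1^{(1)}(u) - g_1^{(2)}(u)|\,\nu_1(\df u),
\]
and dividing both sides by two gives the stated inequality.

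There is essentially no genuine obstacle here beyond bookkeeping; the only point that repays a moment's attention is the choice of the intermediate product. Pairing the first-coordinate density carrying superscript $(1)$ with the second-coordinate difference, and the second-coordinate density carrying superscript $(2)$ with the first-coordinate difference, guarantees that in each of the two terms the surviving ``spectator'' factor is a genuine probability density and hence integrates to one. Any consistent diagonal choice achieves this; a mismatched pairing would instead leave a factor that need not integrate to one, and the telescoping would fail to close.
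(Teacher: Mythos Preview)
Your proof is correct and matches the paper's own argument essentially line for line: the paper also adds and subtracts $g_1^{(1)}(u)\,g_2^{(2)}(v)$, applies the triangle inequality, and integrates out the spectator factors $g_1^{(1)}$ and $g_2^{(2)}$ using that each integrates to one. The only difference is cosmetic---you make the telescoping identity explicit and name Tonelli, whereas the paper leaves both implicit.
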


\begin{proof}
    The result holds because
    \[
    \begin{aligned}
        2 \, \|\tilde{g}^{(1)}(\cdot) - \tilde{g}^{(2)}(\cdot) \|_{\TV} =& \int_{\Omega_1 \times \Omega_2} |g_1^{(1)}(u) \, g_2^{(1)}(v) - g_1^{(2)}(u) \, g_2^{(2)}(v) | \, \nu_1(\df u) \, \nu_2(\df v) \\
        \leq& \int_{\Omega_1} g_1^{(1)}(u) \int_{\Omega_2} |g_2^{(1)}(v) - g_2^{(2)}(v) | \, \nu_2(\df v) \, \nu_1(\df u) + \\
        &\qquad \int_{\Omega_2} g_2^{(2)}(v) \int_{\Omega_1} |g_1^{(1)}(u) - g_1^{(2)}(u)| \, \nu_1(\df u) \, \nu_2(\df v) \\
        =& 2 \, \|g_2^{(1)}(\cdot) - g_2^{(2)}(\cdot) \|_{\TV} + 2 \, \|g_1^{(1)}(\cdot) - g_1^{(2)}(\cdot)\|_{\TV}.
    \end{aligned}
    \]
\end{proof}

The next lemma concerns the total variation between products of inverse Gaussian distributions.

\begin{lemma} \label{lem:tv-invgauss}
    Let $s^{(1)}$ and $s^{(2)}$ be vectors in $\mathbb{R}^q$.
    Let $c$ be a positive number.
    For $i \in \{1,2\}$ and $x \in \mathbb{R}^q$, let
    \[
    h^{(i)}(x) = \prod_{j=1}^q h(x_j; s^{(i)}_j),
    \]
    where, for $b > 0$ and $u \in \mathbb{R}$,
    \[
    h(u, b) = \frac{c}{\sqrt{2 \pi}} e^{c|b|} \, u^{-3/2} \exp \left( - \frac{ b^2 u}{2} - \frac{c^2}{2 u} \right) \ind(u > 0).
    \]
    That is, $h(\cdot, b)$ is the density function of the $\mathrm{InvGaussian}(c/|b|, c^2)$ distribution.
    Then
    \[
    \|h^{(1)}(\cdot) - h^{(2)}(\cdot) \|_{\TV} \leq \sqrt{2 c} \, q^{1/4} \, \|s^{(1)} - s^{(2)}\|_2^{1/2}. 
    \]
\end{lemma}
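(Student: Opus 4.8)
The plan is to bound the total variation distance through the Hellinger distance rather than through the Kullback--Leibler divergence and Pinsker's inequality. The reason is twofold: the Hellinger affinity of a pair of inverse Gaussian laws admits a clean closed form that remains finite even when one of the scale parameters vanishes (the KL divergence, by contrast, blows up as $b_1 \to 0$, which is precisely the lasso-relevant regime $\beta_j = 0$), and the squared Hellinger distance is sub-additive over product measures, which will ultimately produce the $q^{1/4}$ factor.

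First I would reduce to a single coordinate. Writing $\mathrm{H}^2(P,Q) = \int (\sqrt{\df P} - \sqrt{\df Q})^2$ and recalling that the Hellinger affinity $\int \sqrt{\df P \, \df Q}$ is multiplicative over product measures, one gets $\mathrm{H}^2(h^{(1)}, h^{(2)}) \le \sum_{j=1}^q \mathrm{H}^2(h(\cdot, s^{(1)}_j), h(\cdot, s^{(2)}_j))$ from the elementary inequality $\prod_j (1 - x_j) \ge 1 - \sum_j x_j$ applied to $x_j = \mathrm{H}^2_j/2 \in [0,1]$. Next I would compute the single-coordinate affinity explicitly. Using the identity $\int_0^\infty u^{-3/2} e^{-pu - q/u} \, \df u = \sqrt{\pi/q}\, e^{-2\sqrt{pq}}$ for $p,q>0$ (consistent with the fact that $h(\cdot, b)$ integrates to one), a direct calculation gives
\[
\int_0^\infty \sqrt{h(u, b_1)\, h(u, b_2)}\, \df u = \exp\left( -c \left[ \sqrt{\tfrac{b_1^2 + b_2^2}{2}} - \tfrac{|b_1| + |b_2|}{2} \right] \right),
\]
which is manifestly well-defined for all $b_1, b_2 \in \mathbb{R}$, including zeros.

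The core estimate is then to control the exponent $\Delta := \sqrt{(b_1^2 + b_2^2)/2} - (|b_1| + |b_2|)/2 \ge 0$. Using $1 - e^{-c\Delta} \le c\Delta$ gives $\mathrm{H}^2 \le 2c\Delta$, and the difference-of-squares identity $(b_1^2+b_2^2)/2 - ((|b_1|+|b_2|)/2)^2 = (|b_1|-|b_2|)^2/4$ lets me factor $\Delta = \frac{(|b_1|-|b_2|)^2/4}{\sqrt{(b_1^2+b_2^2)/2} + (|b_1|+|b_2|)/2}$ and bound the denominator below by $\sqrt{(b_1^2+b_2^2)/2} \ge \big| |b_1| - |b_2| \big|/\sqrt{2}$ to obtain $\Delta \le \big| |b_1| - |b_2| \big|/(2\sqrt{2}) \le |b_1 - b_2|/(2\sqrt{2})$, the last step by the reverse triangle inequality. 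Hence per coordinate $\mathrm{H}^2(h(\cdot, s^{(1)}_j), h(\cdot, s^{(2)}_j)) \le c\,|s^{(1)}_j - s^{(2)}_j|/\sqrt{2}$, which is linear (not quadratic) in the parameter gap; this linearity, forced by the non-smoothness of $\Delta$ at the origin, is exactly what yields the square-root dependence on $\|s^{(1)}-s^{(2)}\|_2$ at the end.

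Finally I would assemble the pieces: summing over $j$ and applying $\|s^{(1)}-s^{(2)}\|_1 \le \sqrt{q}\, \|s^{(1)}-s^{(2)}\|_2$ gives $\mathrm{H}^2(h^{(1)}, h^{(2)}) \le (c/\sqrt{2})\sqrt{q}\,\|s^{(1)}-s^{(2)}\|_2$, and the standard bound $\|P - Q\|_{\TV} \le \mathrm{H}(P,Q)$ then delivers $\|h^{(1)} - h^{(2)}\|_{\TV} \le (c/\sqrt{2})^{1/2} q^{1/4} \|s^{(1)}-s^{(2)}\|_2^{1/2} \le \sqrt{2c}\, q^{1/4} \|s^{(1)}-s^{(2)}\|_2^{1/2}$, since $2^{-1/4} \le \sqrt{2}$. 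I expect the main obstacle to be the explicit affinity integral together with the sharp control of $\Delta$ near $b = 0$; a routine KL computation would be cleaner algebraically but fails precisely there, so committing to the Hellinger route from the start is the key decision.
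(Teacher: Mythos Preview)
Your proof is correct and takes a genuinely different route from the paper's. The paper works with the Kullback--Leibler divergence and Pinsker's inequality, but because $\KL(h(\cdot,b)\,\|\,h(\cdot,b'))$ is only finite when $|b|\ge|b'|$ (the first moment of $h(\cdot,0)$ is infinite), it must first split the coordinates into $J_1=\{j:|s^{(1)}_j|\ge|s^{(2)}_j|\}$ and its complement, apply the tensorization bound $\|h^{(1)}-h^{(2)}\|_{\TV}\le\|h_1^{(1)}-h_1^{(2)}\|_{\TV}+\|h_2^{(1)}-h_2^{(2)}\|_{\TV}$ to the two blocks, and then use Pinsker with the KL taken in the finite direction on each block. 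Your Hellinger route sidesteps this splitting entirely: the affinity $\exp(-c\Delta)$ is symmetric and finite for all $b_1,b_2$, so the sub-additivity $\mathrm{H}^2\le\sum_j\mathrm{H}^2_j$ applies in one shot. Both arguments hinge on the same linear-in-$|b_1-b_2|$ control of the per-coordinate divergence (which is what produces the square-root rate), but yours is cleaner and in fact delivers a sharper constant, $2^{-1/4}\sqrt{c}$ in place of $\sqrt{2c}$. One small correction to your commentary: a ``routine KL computation'' does not actually fail here, it just needs the directional splitting trick the paper employs; your approach avoids that complication rather than an outright obstruction.
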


\begin{proof}
    For two probability density functions $g_1$ and $g_2$, denote their Kullback-Leibler divergence by $\mathrm{KL}(g_1(\cdot) \, \|\, g_2(\cdot))$, defined in terms of the naturual logarithm.
    We first show the following:
    Let $b$ and $b'$ be real numbers such that $|b| \geq |b'|$.
    Then, 
    \begin{equation} \label{ine:close-kl}
    \mbox{KL}(  h(\cdot , b) \, \| \,  h(\cdot , b'  )) \leq c |b - b'|.
    \end{equation}
    Without loss of generality, assume that $b \neq b'$, which would imply $|b| > 0$.
    Suppose that $U$ is distributed as $h(\cdot, b)$, so that it has expectation $c/|b|$.
    Then, one has
    \[
    \begin{aligned}
        \mbox{KL}( \, h(\cdot , b) \, \| \,  h(\cdot , b') \, ) &= E\left[ c (|b| - |b'|) - \frac{(b^2 - b'^2) U}{2}  \right] \\
        &= c \, (|b| - |b'|) - \frac{c(b^2 - b'^2)}{2|b|} \\
        &\leq c \, (|b| - |b'|) \\
        &\leq c \, |b - b'|,
    \end{aligned}
    \]
    establishing \eqref{ine:close-kl}.

    Let $s^{(1)}$ and $s^{(2)}$ be elements of $\mathbb{R}^q$.
    Let $J_1 = \{j \in \{1,\dots,q\}: |s_j^{(1)}| \geq |s_j^{(2)}|\}$, and let $J_2$ be its complement.
    For $i \in \{1,2\}$, define the probability density functions:
    \[
    h_1^{(i)}((x_j)_{j \in J_1}) = \prod_{j \in J_1} h(x_j, s_j^{(i)}), \quad h_2^{(i)}((x_j)_{j \in J_2}) = \prod_{j \in J_2} h(x_j, s_j^{(i)}).
    \]
    By Lemma \ref{lem:tv-tensor} in Appendix~\ref{app:technical},
    \begin{equation} \label{ine:tv-tensor}
    \left\|h^{(1)}(\cdot) - h^{(2)}(\cdot) \right\|_{\TV} \leq \|h_1^{(1)}(\cdot) - h_1^{(2)}(\cdot) \|_{\TV} + \|h_2^{(1)}(\cdot) - h_2^{(2)}(\cdot) \|_{\TV}.
    \end{equation}
    By Pinsker's ienquality and the tensorization of the KL divergence for product measures,
    \[
    \begin{aligned}
    &\|h_1^{(1)}(\cdot) - h_1^{(2)}(\cdot) \|_{\TV} \leq \sqrt{\frac{1}{2} \sum_{j \in J_1} \mbox{KL} (h(\cdot, s_j^{(1)}) \, \| \, h(\cdot, s_j^{(2)})) }, \\
    &\|h_2^{(1)}(\cdot) - h_2^{(2)}(\cdot) \|_{\TV} \leq \sqrt{\frac{1}{2} \sum_{j \in J_2} \mbox{KL} (h(\cdot, s_j^{(2)}) \, \| \, h(\cdot, s_j^{(1)})) }.
    \end{aligned}
    \]
    Combining these displays with \eqref{ine:close-kl} and \eqref{ine:tv-tensor} yields
    \[
    \begin{aligned}
        \left\|h^{(1)}(\cdot) - h^{(2)}(\cdot) \right\|_{\TV} & \leq \sqrt{\frac{c}{2} \sum_{j \in J_1} |s_j^{(1)} - s_j^{(2)}| } + \sqrt{\frac{c}{2} \sum_{j \in J_2} |s_j^{(1)} - s_j^{(2)}| } \\
        &\leq \sqrt{2 c} \sqrt{\sum_{j=1}^q |s_j^{(1)} - s_j^{(2)}|}.
    \end{aligned}
    \]
    By the Cauchy-Schwarz inequality,
    \[
    \sqrt{\sum_{j=1}^q |s_j^{(1)} - s_j^{(2)}|} \leq \sqrt{\sqrt{q} \sqrt{\sum_{j=1}^q |s_j^{(1)} - s_j^{(2)}|^2 } }.
    \]
    The desired result then follows.
\end{proof}

\section{Technical Proofs} \label{app:proofs}

\subsection{Proof of Proposition \ref{pro:initial}}

We first prove the following lemma.

\begin{lemma} \label{lem:initial}
    Assume that there exists a number $C \in (0,\infty)$ such that $\Lik(y \mid \alpha, \beta) \leq C$ for $\alpha \in \mathbb{R}^d$ and $\beta \in \mathbb{R}^p$.
    Assume further that there exist $\ell_0 \in \mathbb{R}$, $L \in [0,\infty)$, and $\eta \in \mathbb{R}^{d + p}$ such that, for $\alpha \in \mathbb{R}^d$ and $\beta \in \mathbb{R}^p$,
    \begin{equation} \label{ine:l-smooth}
    \ell(\alpha, \beta) \leq \ell_0 + \eta^{\top} \left( \begin{array}{c} \alpha \\ \lambda \beta  \end{array} \right) + \frac{L}{2} (\|\alpha\|_2^2 + \lambda^2 \|\beta\|_2^2).
    \end{equation}
    (The quantities $C$, $\ell_0$, $L$, and $\eta$ may depend on $y$, $\lambda$, and $\theta$.)
    Let
    \[
    \begin{aligned}
        \omega_{\eta, L}(\alpha, \beta) = & \left( \frac{L + \theta^2}{2\pi} \right)^{d/2} \left( \frac{L + 1}{2\pi} \right)^{p/2} \lambda^p \\
        & \quad \quad \exp \left\{ - \frac{1}{2} \left[ \left( \begin{array}{c} \alpha \\ \lambda \beta \end{array} \right) + V_L \eta \right]^{\top}  V_L^{-1}  \left[ \left( \begin{array}{c} \alpha \\ \lambda \beta \end{array} \right) + V_L \eta \right]  \right\},
    \end{aligned}
    \]
    where 
    \[
    V_L = \left( \begin{array}{cc}
        I_d/(L + \theta^2) & 0  \\
        0 & I_p/(L+1)
    \end{array} \right).
    \]
    Then, for $\alpha \in \mathbb{R}^d$ and $\beta \in \mathbb{R}^p$,
    \[
    \begin{aligned}
        &\log \omega_{\eta, L}(\alpha, \beta) - \log \pi_{A,B \mid Y}(\alpha, \beta \mid y) \\
        \leq & \frac{d}{2} \log \left( \frac{L}{\theta^2} + 1 \right) + \frac{p}{2} \log(L+1) - \frac{p}{2} \log (2 \pi) - \frac{1}{2} \eta^{\top} V_L \eta \, + \log C + p \log 2 + \ell_0 + \frac{p}{2}.
    \end{aligned}
    \]
\end{lemma}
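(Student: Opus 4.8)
The plan is to write both log-densities explicitly, subtract, and then exploit the fact that $\omega_{\eta,L}$ is designed precisely so that its quadratic exponent matches the quadratic majorant of $-\log\pi_{A,B\mid Y}$ supplied by \eqref{ine:l-smooth}. First I would expand the quadratic form in $\log\omega_{\eta,L}$. Writing $w = (\alpha^{\top}, \lambda\beta^{\top})^{\top}$ and using $V_L^{-1} = \mathrm{diag}((L+\theta^2)I_d,\,(L+1)I_p)$, one gets
\[
\tfrac{1}{2}(w + V_L\eta)^{\top} V_L^{-1}(w + V_L\eta) = \tfrac{1}{2}(L+\theta^2)\|\alpha\|_2^2 + \tfrac{1}{2}(L+1)\lambda^2\|\beta\|_2^2 + \eta^{\top} w + \tfrac{1}{2}\eta^{\top} V_L\eta,
\]
so that $\log\omega_{\eta,L}$ equals the negative of this quantity plus the additive constant $\frac{d}{2}\log\frac{L+\theta^2}{2\pi} + \frac{p}{2}\log\frac{L+1}{2\pi} + p\log\lambda$. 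On the other side, $\log\pi_{A,B\mid Y}(\alpha,\beta\mid y) = -\ell(\alpha,\beta) - \frac{\theta^2}{2}\|\alpha\|_2^2 - \lambda\|\beta\|_1 - \log Z$, where $Z$ denotes the normalizing integral of the unnormalized posterior $g_*$.

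Next I would form the difference $\log\omega_{\eta,L} - \log\pi_{A,B\mid Y}$ and bound the term $+\ell(\alpha,\beta)$ from above using \eqref{ine:l-smooth}, replacing it by $\ell_0 + \eta^{\top} w + \frac{L}{2}(\|\alpha\|_2^2 + \lambda^2\|\beta\|_2^2)$. The entire point of the construction is that this produces two exact cancellations: the linear terms $\pm\,\eta^{\top} w$ cancel, and in the $\|\alpha\|_2^2$ coefficient the $-\frac{1}{2}(L+\theta^2)$ from $\omega_{\eta,L}$ is annihilated by $\frac{L}{2}$ (from the majorant) plus $\frac{\theta^2}{2}$ (from the Gaussian prior). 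What survives is a purely $\beta$-dependent residual, $-\frac{1}{2}\lambda^2\|\beta\|_2^2 + \lambda\|\beta\|_1$, together with the constants $\frac{d}{2}\log\frac{L+\theta^2}{2\pi} + \frac{p}{2}\log\frac{L+1}{2\pi} + p\log\lambda - \frac{1}{2}\eta^{\top} V_L\eta + \ell_0 + \log Z$.

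It then remains to handle the residual and the constant $\log Z$; neither step is genuinely difficult, and the only non-bookkeeping estimates are two elementary one-dimensional bounds. For the residual I would write $-\frac{1}{2}\lambda^2\|\beta\|_2^2 + \lambda\|\beta\|_1 = \sum_{j=1}^p\bigl(\lambda|\beta_j| - \frac{1}{2}\lambda^2\beta_j^2\bigr)$ and maximize $t \mapsto \lambda t - \frac{1}{2}\lambda^2 t^2$ over $t\ge 0$, whose maximum is $\frac{1}{2}$, giving an overall bound of $p/2$; this is exactly the Gaussian-versus-Laplace discrepancy in the prior. For $\log Z$, since $\Lik(y\mid\alpha,\beta)\le C$ yields $-\ell\le\log C$, the integral factors as $C\,(2\pi/\theta^2)^{d/2}(2/\lambda)^p$, whence $\log Z \le \log C + \frac{d}{2}\log\frac{2\pi}{\theta^2} + p\log 2 - p\log\lambda$. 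Substituting both bounds, the $p\log\lambda$ terms cancel, the $d$-dimensional logarithms combine via $\frac{d}{2}\log\frac{L+\theta^2}{2\pi} + \frac{d}{2}\log\frac{2\pi}{\theta^2} = \frac{d}{2}\log(L/\theta^2 + 1)$, and the remaining $p$-dimensional terms collapse to $\frac{p}{2}\log(L+1) - \frac{p}{2}\log(2\pi)$, producing precisely the claimed inequality. The main thing to watch is simply keeping the exact cancellations straight, since a sign error in matching $\omega_{\eta,L}$ against the majorant would break the whole computation.
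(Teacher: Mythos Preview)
Your proposal is correct and follows essentially the same approach as the paper: both expand $\log\omega_{\eta,L}$ and $-\log\pi_{A,B\mid Y}$, bound $\ell$ by the quadratic majorant \eqref{ine:l-smooth}, bound the normalizing constant using $\Lik\le C$, and handle the Laplace-versus-Gaussian discrepancy via the elementary inequality $\lambda|\beta_j|\le \tfrac{1}{2}\lambda^2\beta_j^2+\tfrac{1}{2}$ (the paper calls this ``Cauchy--Schwarz,'' you phrase it as maximizing $t-\tfrac12 t^2$). The only difference is cosmetic ordering---you subtract first and then apply the bounds, whereas the paper bounds $-\log\pi$ in one block before subtracting---but the estimates and cancellations are identical.
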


\begin{proof}
    Recall that $\ell(\alpha, \beta)$ denotes $- \log f(y \mid \alpha, \beta)$.
    It holds that
    \[
    \begin{aligned}
        - \log \pi_{A, B \mid Y}(\alpha, \beta \mid y) =& \log \int_{\mathbb{R}^{d+p}} f(y \mid \alpha', \beta') \exp \left( - \frac{\theta^2 \|\alpha'\|_2^2}{2} - \lambda \|\beta'\|_1 \right) \df(\alpha', \beta') + \\
        &\quad \quad \ell(\alpha, \beta) + \frac{\theta^2 \|\alpha\|_2^2}{2} + \lambda \|\beta\|_1 \\
        \leq & \log C + \log \int_{\mathbb{R}^{d+p}}  \exp \left( - \frac{\theta^2 \|\alpha'\|_2^2}{2} - \lambda \|\beta'\|_1 \right) \df(\alpha', \beta') + \\
        &\quad \quad \ell_0 + \eta^{\top} \left( \begin{array}{c} \alpha \\ \lambda \beta  \end{array} \right) + \frac{L}{2} (\|\alpha\|_2^2 + \|\lambda \beta\|_2^2) + \frac{\theta^2 \|\alpha\|_2^2}{2} + \frac{\|\lambda \beta\|_2^2}{2} + \frac{p}{2} \\
        =& \log C + \frac{d}{2} \log (2 \pi) - d \log \theta + p \log 2 - p \log \lambda + \\
        &\quad \quad \ell_0 + \eta^{\top} \left( \begin{array}{c} \alpha \\ \lambda \beta  \end{array} \right) + \frac{(L+\theta^2) \|\alpha\|_2^2 + (L+1) \|\lambda\beta\|_2^2}{2} + \frac{p}{2}.
    \end{aligned}
    \]
    Note that we have used the Cauchy-Schwarz inequality.
    On the other hand,
    \[
    \begin{aligned}
        \log \omega_{\eta, L}(\alpha, \beta) =& \frac{d}{2} \log (L + \theta^2) + \frac{p}{2} \log (L+1) - \frac{d+p}{2} \log (2 \pi) + p \log \lambda \\
        & \quad\quad - \frac{(L+\theta^2) \|\alpha\|_2^2}{2} - \frac{(L+1) \|\lambda\beta\|_2^2}{2} - \eta^{\top} \left( \begin{array}{c}
             \alpha  \\
             \lambda \beta
        \end{array} \right) - \frac{1}{2} \eta^{\top} V_L \eta.
    \end{aligned}
    \]
    The desired result then follows.
\end{proof}

We can now prove Proposition \ref{pro:initial}.

\begin{proof} [Proof of Proposition \ref{pro:initial}]
    Consider each of the three models.

    \begin{enumerate}
        \item In the context of Bayesian probit lasso model, $f(y \mid \alpha, \beta ) \leq 1$.
        Moreover, by a Taylor expansion, 
        \[
        \ell(\alpha, \beta) \leq n \log 2 + \tilde{\eta}^{\top} \left( \begin{array}{c}
             \alpha  \\
             \beta
        \end{array} \right) + \frac{1}{2} \left( \begin{array}{c}
             \alpha  \\
             \beta
        \end{array} \right)^{\top} X^{\top}X \left( \begin{array}{c}
             \alpha  \\
             \beta
        \end{array} \right),
        \]
        where $\tilde{\eta} = \nabla \ell(0,0)$.
        See the proof of Lemma \ref{lem:iso-exmaples}.
        Then \eqref{ine:l-smooth} holds with
        \[
        \ell_0 = n \log 2, \quad \eta = \left( \begin{array}{c}
             \tilde{\eta}_1 \\
             \tilde{\eta}_2 / \lambda \\
             \cdots \\
             \tilde{\eta}_{1+p}/\lambda
        \end{array} \right) = - \sqrt{\frac{2}{\pi}} \sum_{i=1}^n (2y_i - 1) \left( \begin{array}{c} 1 \\ x_i/\lambda \end{array} \right), \quad L = \sigma_{\max}(X_{\lambda}^{\top} X_{\lambda}).
        \]
        By Lemma \ref{lem:initial}, 
        \[
        \begin{aligned}
            \frac{\omega_{\eta,L}(\alpha, \beta)}{\pi_{A,B \mid Y}(\alpha, \beta \mid y)} \leq& \exp \left\{ \frac{\log[\sigma_{\max}(X_{\lambda}^{\top}X_{\lambda})/\theta^2 + 1] }{2} + \frac{p \log [\sigma_{\max}(X_{\lambda}^{\top}X_{\lambda}) + 1]}{2} + \right. \\
            & \quad \quad \left. p \log 2 + n \log 2 + \frac{p}{2} \right\}.
        \end{aligned}
        \]
        It follows that
        \[
        \begin{aligned}
            \tilde{C}(\omega_{\eta,L}) & \leq \sup_{\alpha, \beta} \frac{\omega_{\eta,L}(\alpha,\beta)}{\pi_{A,B \mid Y}(\alpha, \beta \mid y)} \\
            &\leq \exp \left\{ \frac{\log[\sigma_{\max}(X_{\lambda}^{\top}X_{\lambda})/\theta^2 + 1] }{2} + \frac{p \log [\sigma_{\max}(X_{\lambda}^{\top}X_{\lambda}) + 1]}{2} + \right. \\
            & \quad \quad \left. p \log 2 + n \log 2 + \frac{p}{2} \right\}.
        \end{aligned}
        \]
        The asymptotic relation \eqref{eq:C-omega-examples} holds.

        \item By an argument analogous to 1., \eqref{eq:C-omega-examples} holds in the context of Bayesian logistic model.

        \item In the context of Bayesian Gaussian model with heteroskedasticity, $f(y \mid \alpha, \beta) \leq (\gamma/2)^n$.
        Moreover, it holds that 
        \[
        \ell(\alpha, \beta) = -n \log \left( \frac{\gamma}{2} \right) + \gamma \left( \sum_{i=1}^n |y_i| + \frac{n}{2} \right) + \frac{\gamma}{2} \left( \begin{array}{c} \alpha \\ \lambda\beta \end{array} \right)^{\top} X_{\lambda}^{\top} X_{\lambda} \left( \begin{array}{c} \alpha \\ \lambda\beta \end{array} \right).
        \]
        See the proof of Lemma \ref{lem:iso-exmaples}.
        Then, by Lemma \ref{lem:initial}, \eqref{eq:C-omega-examples} holds.
    \end{enumerate}
\end{proof}

\subsection{Proof of Lemma \ref{lem:density-ratio}}

We prove the following equivalent result.

\begin{lemma}
    Assume that there exists a number $C \in (0,\infty)$ such that $\Lik(y \mid \alpha, \beta) \leq C$ for $\alpha \in \mathbb{R}^d$ and $\beta \in \mathbb{R}^p$.
    Assume further that there exist $\ell_0 \in \mathbb{R}$, $L \in [0,\infty)$, and $\eta \in \mathbb{R}^{d + p}$ such that, for $\alpha \in \mathbb{R}^d$ and $\beta \in \mathbb{R}^p$,
    \[
    \ell(\alpha, \beta) \leq \ell_0 + \eta^{\top} \left( \begin{array}{c} \alpha \\ \lambda \beta  \end{array} \right) + \frac{L}{2} (\|\alpha\|_2^2 + \lambda^2 \|\beta\|_2^2).
    \]
    (The quantities $C$, $\ell_0$, $L$, and $\eta$ may depend on $y$, $\theta$, and $\lambda$.)
    Let 
    \[
        \mu_{\theta, \lambda}(\alpha, \beta) = \frac{\theta^d}{(2\pi)^{d/2}} \exp \left( -\frac{\theta^2 \|\alpha\|_2^2}{2} \right) \, \frac{\lambda^p}{2^p} \, \exp \left( - \lambda \|\beta\|_1 \right).
    \]
    Then
    \[
    \frac{\pi_{A, B \mid Y}(\alpha, \beta \mid y)}{\mu_{\theta, \lambda}(\alpha, \beta)} \leq  C \,  e^{\ell_0} \left( \frac{L + \theta^2}{\theta^2} \right)^{d/2} \max\left\{ \frac{4}{\sqrt{5} - 1}, \frac{2 \sqrt{L}}{\sqrt{5}-1 } \right\}^p.
    \]
\end{lemma}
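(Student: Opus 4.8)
The plan is to reduce the whole statement to a single lower bound on the posterior's normalizing constant. Writing $Z = \int_{\mathbb{R}^{d+p}} \Lik(y\mid\alpha',\beta')\exp(-\theta^2\|\alpha'\|_2^2/2 - \lambda\|\beta'\|_1)\,\df(\alpha',\beta')$, the Gaussian and Laplace exponential factors of $\pi_{A,B\mid Y}$ and $\mu_{\theta,\lambda}$ cancel exactly, leaving
\[
\frac{\pi_{A,B\mid Y}(\alpha,\beta\mid y)}{\mu_{\theta,\lambda}(\alpha,\beta)} = \frac{\Lik(y\mid\alpha,\beta)}{Z}\cdot\frac{(2\pi)^{d/2}\,2^p}{\theta^d\,\lambda^p} \leq \frac{C}{Z}\cdot\frac{(2\pi)^{d/2}\,2^p}{\theta^d\,\lambda^p},
\]
where the inequality uses $\Lik\leq C$. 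Since $\alpha$ and $\beta$ have dropped out, it now suffices to bound $Z$ from below.

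To do so I would insert the hypothesized quadratic envelope, $\Lik = e^{-\ell} \geq e^{-\ell_0}\exp\!\big(-\eta^{\top}(\alpha^{\top},\lambda\beta^{\top})^{\top} - \tfrac{L}{2}(\|\alpha\|_2^2 + \lambda^2\|\beta\|_2^2)\big)$, after which the integral factorizes. The $\alpha$-part is Gaussian with precision $L+\theta^2$; completing the square yields a factor $\exp(\|\eta_\alpha\|_2^2/(2(L+\theta^2)))\geq 1$ that I simply discard, giving a contribution $\geq (2\pi/(L+\theta^2))^{d/2}$. After the substitution $u=\lambda\beta$ (Jacobian $\lambda^{-p}$), the $\beta$-part becomes $\lambda^{-p}\prod_{j=1}^p I_j$ with $I_j = \int_{\mathbb{R}}\exp(-c_j u - \tfrac{L}{2}u^2 - |u|)\,\df u$, where $c_j$ is the relevant coordinate of $\eta$.

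The crux is a lower bound on each $I_j$ that is \emph{uniform} in the sign-unknown coefficient $c_j$. The key observation is that $u\mapsto \exp(-\tfrac{L}{2}u^2-|u|)$ is even, so $I_j = \int_{\mathbb{R}}\cosh(c_j u)\exp(-\tfrac{L}{2}u^2-|u|)\,\df u \geq \int_{\mathbb{R}}\exp(-\tfrac{L}{2}u^2-|u|)\,\df u =: \kappa_L$, with the worst case at $c_j=0$. This is the delicate point I expect to be the main obstacle: the naive symmetrization of restricting to the half-line on which $c_j u\geq 0$ loses a factor of two and is too weak to reach the stated constants, whereas the $\cosh\geq 1$ bound is sharp.

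It then remains to lower bound $\kappa_L = 2\int_0^\infty \exp(-\tfrac{L}{2}u^2-u)\,\df u$ by elementary estimates matching the two branches of the maximum. For $L\leq 4$, on $[0,1]$ one has $\tfrac{L}{2}u^2 + u \leq 3u$, whence $\kappa_L \geq 2\int_0^1 e^{-3u}\,\df u = \tfrac{2}{3}(1-e^{-3}) > \tfrac{\sqrt5-1}{2}$. For $L>4$, rescaling $u=v/\sqrt L$ and using $1/\sqrt L < 1/2$ together with $\tfrac12 v^2 + \tfrac12 v \leq v$ on $[0,1]$ gives $\kappa_L \geq \tfrac{2(1-e^{-1})}{\sqrt L} > \tfrac{\sqrt5-1}{\sqrt L}$. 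Combining, $2/\kappa_L \leq \max\{4/(\sqrt5-1),\, 2\sqrt L/(\sqrt5-1)\}$, and assembling the $\alpha$- and $\beta$-factors into the bound for $1/Z$ produces exactly the claimed inequality. The only remaining care is numerical: one must check that $\tfrac23(1-e^{-3})\approx0.634$ and $2(1-e^{-1})\approx1.264$ indeed clear the golden-ratio thresholds $(\sqrt5-1)/2\approx0.618$ and $\sqrt5-1\approx1.236$.
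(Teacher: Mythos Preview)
Your argument is correct and reaches the stated bound, but the treatment of the one-dimensional integrals $I_j=\int_{\mathbb R}\exp(-c_j u-\tfrac{L}{2}u^2-|u|)\,\df u$ differs from the paper's. The paper writes $I_j$ as a sum of two half-line Gaussian integrals, expresses each via the Mill's ratio $\Phi(-u)/\phi(u)$, and then does a case split on whether $|c_j|\ge 1$ (where one Mill's ratio is trivially $\ge\sqrt{\pi/2}$) or $|c_j|<1$ (where Birnbaum's inequality $\Phi(-u)/\phi(u)\ge(\sqrt{4+u^2}-u)/2$ is invoked); combining the cases yields the same minimum $\min\{(\sqrt5-1)/2,(\sqrt5-1)/\sqrt L\}$. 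Your symmetrization $I_j=\int_{\mathbb R}\cosh(c_j u)\exp(-\tfrac{L}{2}u^2-|u|)\,\df u\ge\kappa_L$ eliminates the dependence on $c_j$ in one line and identifies $c_j=0$ as the worst case, after which your elementary interval estimates on $\kappa_L$ suffice (the numerical margins $\tfrac{2}{3}(1-e^{-3})\approx0.6335>0.618$ and $2(1-e^{-1})\approx1.264>1.236$ are genuine). Your route is shorter and avoids the external Mill's-ratio lemma; the paper's route, on the other hand, makes the Gaussian-tail structure explicit and would more readily yield sharper constants if one wanted them.
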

\begin{proof}
    For $\alpha \in \mathbb{R}^d$ and $\beta \in \mathbb{R}^p$, it holds that
    \begin{equation} \label{ine:density_ratio}
    \begin{aligned}
        &\frac{\pi_{A, B \mid Y}(\alpha, \beta \mid y)}{\mu_{\theta, \lambda}(\alpha, \beta)} \\
        =& \frac{\Lik(y \mid \alpha, \beta) \, \exp( - \theta^2 \|\alpha\|_2^2/2 -\lambda \|\beta\|_1) \, / \, \int_{\mathbb{R}^{d+p}} \exp[-\ell(\alpha', \beta') - \theta^2 \|\alpha'\|_2^2/2 - \lambda\|\beta'\|_1] \, \df (\alpha', \beta') }{ \theta^d \, (2 \pi)^{-d/2} \,  (\lambda/2)^p \exp( - \theta^2 \|\alpha\|_2^2/2 -  \lambda\|\beta\|_1 )} \\
        \leq& \left( \frac{2}{\lambda} \right)^p \left( \frac{2 \pi}{\theta^2} \right)^{d/2} C \,  e^{\ell_0} \times \\
        & \quad \quad \left\{ \int_{\mathbb{R}^{d+p}} \exp \left[ - \eta^{\top} \left( \begin{array}{c} \alpha' \\ \lambda \beta' \end{array} \right) - \frac{L}{2} (\|\alpha'\|_2^2 + \lambda^2 \|\beta'\|_2^2) - \frac{\theta^2 \|\alpha'\|_2^2}{2} - \lambda \|\beta'\|_1 \right] \, \df (\alpha', \beta') \right\}^{-1} \\
        =& \left( \frac{2}{\lambda} \right)^p \left( \frac{2 \pi}{\theta^2} \right)^{d/2} C e^{\ell_0} \left( \frac{L + \theta^2}{ 2 \pi } \right)^{d/2} \exp \left[ - \sum_{j=1}^d \frac{\eta_j^2}{2(L + \theta^2)} \right]  \times \\
        & \quad \quad \left[  \prod_{j=d+1}^{d+p} \lambda^{-1} \int_{-\infty}^{\infty} \exp \left( -\eta_j u - \frac{L}{2} u^2 - |u|  \right) \, \df u \right]^{-1} \\
        \leq & 2^p \left( \frac{L + \theta^2}{\theta^2} \right)^{d/2}  C e^{\ell_0} \left[ \prod_{j=d+1}^{d+p} \int_{-\infty}^{\infty} \exp \left( -\eta_j u - \frac{L}{2} u^2 - |u|  \right) \, \df u \right]^{-1},
    \end{aligned}
    \end{equation}
    where $u = \lambda \beta_j$ in the univariate integrals.
    For $j \in \{d+1,\dots,d+p\}$, 
    \begin{equation} \label{ine:int_pi}
    \begin{aligned}
        & \int_{-\infty}^{\infty} \exp \left( -\eta_j u - \frac{L}{2} u^2 - |u|  \right) \, \df u \\
        =& \int_0^{\infty} \exp \left( -\eta_j u - \frac{L}{2} u^2 - u  \right) \, \df u + \int_{-\infty}^0  \exp \left( -\eta_j u - \frac{L}{2} u^2 + u  \right) \, \df u \\
        =& \sqrt{\frac{2 \pi}{L}} \left[ e^{(1 + \eta_j)^2/(2L)}\int_0^{\infty} \phi \left(v + \frac{1 + \eta_j}{\sqrt{L}} \right) \, \df v + e^{(1-\eta_j)^2/(2L)} \int_{-\infty}^0 \phi\left( v - \frac{1 - \eta_j}{\sqrt{L}} \right) \, \df v \right] \\
        =& \frac{1}{\sqrt{L}} \left[ \frac{\Phi(-(1 + \eta_j)/\sqrt{L})}{\phi((1+ \eta_j)/\sqrt{L})} + \frac{\Phi(-(1-\eta_j)/\sqrt{L})}{\phi((1 - \eta_j)/\sqrt{L})}\right],
    \end{aligned}
    \end{equation}
    where the third line is obtained by letting $v = \sqrt{L} u$ in the integrations.
    
    The Mill's ratio $\Phi(-u) / \phi(u)$ is lower bounded by $\sqrt{\pi/2}$ whenever $u \leq 0$.
    Thus, if $\eta_j \leq - 1$ or $\eta_j \geq 1$, then at least one of the two Mill's ratios in the last line of \eqref{ine:int_pi} is lower bounded by $\sqrt{\pi / 2}$, and
    \begin{equation} \nonumber
    \int_{-\infty}^{\infty} \exp \left( -\eta_j u - \frac{L}{2} u^2 -  |u|  \right) \, \df u  \geq \sqrt{\frac{\pi}{2 L}}.
    \end{equation}
    Assume now that $-1 < \eta_j < 1$, so
    \begin{equation} \label{ine:lambda_gj}
    0 < 1 + \eta_j < 2, \quad 0 < 1 - \eta_j < 2.
    \end{equation}
    By \cite{birnbaum1942}, for $u > 0$, 
    \[
    \frac{\Phi(-u)}{\phi(u)} \geq \frac{\sqrt{4 + u^2} - u}{2} \geq \min\left\{ \frac{\sqrt{5}-1}{2u}, \frac{\sqrt{5}-1}{2} \right\}.
    \]
    Hence, by \eqref{ine:int_pi}, when $-1 < \eta_j < 1$,
    \[
    \begin{aligned}
        &\int_{-\infty}^{\infty} \exp \left( -\eta_j u - \frac{L}{2} u^2 -  |u|  \right) \, \df u \\
        \geq& \frac{\sqrt{5}-1}{2} \frac{1}{\sqrt{L}} \left( \min \left\{ \frac{\sqrt{L}}{1 + \eta_j}, 1 \right\} + \min \left\{ \frac{\sqrt{L}}{1 - \eta_j}, 1 \right\} \right) \\
         \stackrel{\eqref{ine:lambda_gj}}{\geq} &  \frac{\sqrt{5}-1}{2}  \min \left\{ 1, \frac{2}{\sqrt{L}} \right\}  .
    \end{aligned}
    \]
    In summary, for a general $\eta_j$,
    \begin{equation} \label{ine:int_pi_1}
        \begin{aligned}
            \int_{-\infty}^{\infty} \exp \left( -\eta_j u - \frac{L}{2} u^2 - |u|  \right) \, \df u &\geq \min\left\{ \frac{\sqrt{5}-1}{2}, \frac{\sqrt{5}-1}{\sqrt{L}}, \sqrt{\frac{\pi}{2L}} \right\} \\
            &= \min \left\{ \frac{\sqrt{5}-1}{2}, \frac{\sqrt{5}-1}{\sqrt{L}} \right\}.
        \end{aligned}
    \end{equation} 

    Combining \eqref{ine:density_ratio} and \eqref{ine:int_pi_1} yields
    \[
    \frac{\pi_{A, B \mid Y}(\alpha, \beta \mid y)}{\mu_{\theta, \lambda}(\alpha, \beta)} \leq 2^p \left( \frac{L + \theta^2}{\theta^2} \right)^{d/2} C \,  e^{\ell_0} \max\left\{ \frac{2}{\sqrt{5} - 1}, \frac{\sqrt{L}}{\sqrt{5}-1} \right\}^p.
    \]
    The desired result follows.
\end{proof}

\bibliographystyle{plain}
\bibliography{qinbib_lasso}

\begin{thebibliography}{10}

\bibitem{albert1993bayesian}
James~H Albert and Siddhartha Chib.
\newblock Bayesian analysis of binary and polychotomous response data.
\newblock {\em Journal of the American Statistical Association}, 88(422):669--679, 1993.

\bibitem{andrews1974scale}
David~F Andrews and Colin~L Mallows.
\newblock Scale mixtures of normal distributions.
\newblock {\em Journal of the Royal Statistical Society: Series B}, 36(1):99--102, 1974.

\bibitem{andrieu2024explicit}
Christophe Andrieu, Anthony Lee, Sam Power, and Andi~Q Wang.
\newblock {Explicit convergence bounds for Metropolis Markov chains: Isoperimetry, spectral gaps and profiles}.
\newblock {\em Annals of Applied Probability}, 34(4):4022--4071, 2024.

\bibitem{ascolani2025mixing}
Filippo Ascolani and Giacomo Zanella.
\newblock {Mixing times of data-augmentation Gibbs samplers for high-dimensional probit regression}.
\newblock arXiv preprint, 2025.

\bibitem{birnbaum1942}
Z.~W. Birnbaum.
\newblock {An inequality for Mill's ratio}.
\newblock {\em Annals of Mathematical Statistics}, 13:245--246, 1942.

\bibitem{bobkov1997isoperimetric}
Sergey~G Bobkov and Christian Houdr{\'e}.
\newblock Isoperimetric constants for product probability measures.
\newblock {\em Annals of Probability}, pages 184--205, 1997.

\bibitem{chakraborty2016convergence}
Saptarshi Chakraborty and Kshitij Khare.
\newblock Convergence properties of {G}ibbs samplers for {B}ayesian probit regression with proper priors.
\newblock {\em Electronic Journal of Statistics}, 11(1):177--210, 2017.

\bibitem{chewi2025log}
Sinho Chewi.
\newblock {Log-concave Sampling}.
\newblock Book draft, 2025.

\bibitem{choi2013analysis}
Hee~Min Choi and James~P Hobert.
\newblock Analysis of {MCMC} algorithms for {B}ayesian linear regression with {L}aplace errors.
\newblock {\em Journal of Multivariate Analysis}, 117:32--40, 2013.

\bibitem{choi2013polya}
Hee~Min Choi and James~P Hobert.
\newblock {The Polya-Gamma Gibbs sampler for Bayesian logistic regression is uniformly ergodic}.
\newblock {\em Electronic Journal of Statistics}, 7:2054--2064, 2013.

\bibitem{dwivedi2019log}
Raaz Dwivedi, Yuansi Chen, Martin~J Wainwright, and Bin Yu.
\newblock {Log-concave sampling: Metropolis-Hastings algorithms are fast}.
\newblock {\em Journal of Machine Learning Research}, 20(183):1--42, 2019.

\bibitem{dyer1991random}
Martin Dyer, Alan Frieze, and Ravi Kannan.
\newblock A random polynomial-time algorithm for approximating the volume of convex bodies.
\newblock {\em Journal of the Association for Computing Machinery}, 38(1):1--17, 1991.

\bibitem{figueiredo2003adaptive}
M{\'a}rio~AT Figueiredo.
\newblock Adaptive sparseness for supervised learning.
\newblock {\em IEEE Transactions on Pattern Analysis and Machine Intelligence}, 25(9):1150--1159, 2003.

\bibitem{goyal2025mixing}
Alexander Goyal, George Deligiannidis, and Nikolas Kantas.
\newblock {Mixing time bounds for the Gibbs sampler under isoperimetry}.
\newblock arXiv preprint, 2025.

\bibitem{horn2012matrix}
Roger~A Horn and Charles~R Johnson.
\newblock {\em Matrix Analysis}.
\newblock Cambridge University Press, 2nd edition, 2012.

\bibitem{jerrum1988conductance}
Mark Jerrum and Alistair Sinclair.
\newblock {Conductance and the rapid mixing property for Markov chains: the approximation of permanent resolved}.
\newblock In {\em Proceedings of the Twentieth Annual ACM Symposium on Theory of Computing}, pages 235--244, 1988.

\bibitem{johndrow2016mcmc}
James~E. Johndrow, Aaron Smith, Natesh Pillai, and David~B. Dunson.
\newblock {MCMC} for imbalanced categorical data.
\newblock {\em Journal of the American Statistical Association}, 114:1394--1403, 2019.

\bibitem{johnstone2001distribution}
Iain~M Johnstone.
\newblock On the distribution of the largest eigenvalue in principal components analysis.
\newblock {\em Annals of Statistics}, 29(2):295--327, 2001.

\bibitem{khare2011spectral}
Kshitij Khare and James~P Hobert.
\newblock A spectral analytic comparison of trace-class data augmentation algorithms and their sandwich variants.
\newblock {\em Annals of Statistics}, 39(5):2585--2606, 2011.

\bibitem{khare2013geometric}
Kshitij Khare and James~P Hobert.
\newblock {Geometric ergodicity of the Bayesian lasso}.
\newblock {\em Electronic Journal of Statistics}, 7:2150--2163, 2013.

\bibitem{kwon2024phase}
Youngwoo Kwon, Qian Qin, Guanyang Wang, and Yuchen Wei.
\newblock {A phase transition in sampling from Restricted Boltzmann Machines}.
\newblock {\em Annals of Applied Probability, {\rm to appear}}, 2025+.

\bibitem{lawler1988bounds}
Gregory~F Lawler and Alan~D Sokal.
\newblock Bounds on the $l^2$ spectrum for {M}arkov chains and {M}arkov processes: {A} generalization of {C}heeger’s inequality.
\newblock {\em Transactions of the American Mathematical Society}, 309(2):557--580, 1988.

\bibitem{lee2024fast}
Holden Lee and Kexin Zhang.
\newblock Fast mixing of data augmentation algorithms: {B}ayesian probit, logit, and lasso regression.
\newblock arXiv preprint, 2024.

\bibitem{lovasz1993random}
L{\'a}szl{\'o} Lov{\'a}sz and Mikl{\'o}s Simonovits.
\newblock Random walks in a convex body and an improved volume algorithm.
\newblock {\em Random structures \& algorithms}, 4(4):359--412, 1993.

\bibitem{milman2009role}
Emanuel Milman.
\newblock On the role of convexity in isoperimetry, spectral gap and concentration.
\newblock {\em Inventiones Mathematicae}, 177(1):1--43, 2009.

\bibitem{milman2012properties}
Emanuel Milman.
\newblock Properties of isoperimetric, functional and transport-entropy inequalities via concentration.
\newblock {\em Probability Theory and Related Fields}, 152(3):475--507, 2012.

\bibitem{park2008bayesian}
Trevor Park and George Casella.
\newblock The {B}ayesian lasso.
\newblock {\em Journal of the American Statistical Association}, 103(482):681--686, 2008.

\bibitem{polson2013bayesian}
Nicholas~G Polson, James~G Scott, and Jesse Windle.
\newblock {Bayesian inference for logistic models using P{\'o}lya--Gamma latent variables}.
\newblock {\em Journal of the American Statistical Association}, 108(504):1339--1349, 2013.

\bibitem{pratt1981concavity}
John~W Pratt.
\newblock Concavity of the log likelihood.
\newblock {\em Journal of the American Statistical Association}, 76(373):103--106, 1981.

\bibitem{qin2025convergence}
Qian Qin.
\newblock {Convergence bounds for Monte Carlo Markov chains}.
\newblock In Radu~V Craiu, Dootika Vats, Galin~L Jones, Steve Brooks, Andrew Gelman, and Xiao-Li Meng, editors, {\em Handbook of Markov chain Monte Carlo}. to appear, 2025+.

\bibitem{qin2019convergence}
Qian Qin and James~P Hobert.
\newblock Convergence complexity analysis of {A}lbert and {C}hib's algorithm for {B}ayesian probit regression.
\newblock {\em Annals of Statistics}, 47:2320--2347, 2019.

\bibitem{qin2020wasserstein}
Qian Qin and James~P Hobert.
\newblock {Wasserstein-based methods for convergence complexity analysis of MCMC with applications}.
\newblock {\em Annals of Applied Probability}, 32:124--166, 2022.

\bibitem{rajaratnam2015mcmc}
Bala Rajaratnam and Doug Sparks.
\newblock {MCMC-based inference in the era of big data: A fundamental analysis of the convergence complexity of high-dimensional chains}.
\newblock arXiv preprint, 2015.

\bibitem{rajaratnam2015scalable}
Bala Rajaratnam, Doug Sparks, Kshitij Khare, and Liyuan Zhang.
\newblock {Scalable Bayesian shrinkage and uncertainty quantification for high-dimensional regression}.
\newblock arXiv preprint, 2015.

\bibitem{roberts1997geometric}
Gareth~O Roberts and Jeffrey~S Rosenthal.
\newblock Geometric ergodicity and hybrid {M}arkov chains.
\newblock {\em Electronic Communications in Probability}, 2(2):13--25, 1997.

\bibitem{roy:hobe:2007}
Vivekananda Roy and James~P. Hobert.
\newblock {C}onvergence rates and asymptotic standard errors for {M}arkov chain {M}onte {C}arlo algorithms for {B}ayesian probit regression.
\newblock {\em Journal of the Royal Statistical Society, Series B}, 69:607--623, 2007.

\bibitem{roy2010monte}
Vivekananda Roy and James~P Hobert.
\newblock On {M}onte {C}arlo methods for {B}ayesian multivariate regression models with heavy-tailed errors.
\newblock {\em Journal of Multivariate Analysis}, 101(5):1190--1202, 2010.

\bibitem{tanner1987calculation}
Martin~A Tanner and Wing~Hung Wong.
\newblock The calculation of posterior distributions by data augmentation (with discussion).
\newblock {\em Journal of the American Statistical Association}, 82(398):528--540, 1987.

\bibitem{tibshirani1996regression}
Robert Tibshirani.
\newblock Regression shrinkage and selection via the lasso.
\newblock {\em Journal of the Royal Statistical Society Series B}, 58(1):267--288, 1996.

\bibitem{tierney1994markov}
Luke Tierney.
\newblock Markov chains for exploring posterior distributions.
\newblock {\em Annals of Statistics}, 22(4):1701--1728, 1994.

\bibitem{van2008high}
Sara~A van~de Geer.
\newblock {High-dimensional generalized linear models and the lasso}.
\newblock {\em Annals of Statistics}, 36(2):614 -- 645, 2008.

\bibitem{van2001art}
David~A van Dyk and Xiao-Li Meng.
\newblock The art of data augmentation (with discussion).
\newblock {\em Journal of Computational and Graphical Statistics}, 10(1):1--50, 2001.

\bibitem{vempala2019rapid}
Santosh Vempala and Andre Wibisono.
\newblock {Rapid convergence of the unadjusted Langevin algorithm: Isoperimetry suffices}.
\newblock {\em Advances in Neural Information Processing Systems}, 32, 2019.

\bibitem{wadia2024mixing}
Neha~S Wadia.
\newblock A mixing time bound for gibbs sampling from log-smooth log-concave distributions.
\newblock arXiv preprint, 2024.

\bibitem{wu2022minimax}
Keru Wu, Scott Schmidler, and Yuansi Chen.
\newblock {Minimax mixing time of the Metropolis-adjusted Langevin algorithm for log-concave sampling}.
\newblock {\em Journal of Machine Learning Research}, 23(270):1--63, 2022.

\bibitem{zhao2006model}
Peng Zhao and Bin Yu.
\newblock On model selection consistency of lasso.
\newblock {\em The Journal of Machine Learning Research}, 7:2541--2563, 2006.

\bibitem{zhou2025polynomial}
Quan Zhou.
\newblock Polynomial mixing times of simulated tempering for mixture targets by conductance decomposition.
\newblock arXiv preprint, 2025.

\end{thebibliography}
\end{document}